\documentclass[12pt]{amsart}       
\usepackage{txfonts}
\usepackage{amssymb}
\usepackage{eucal}
\usepackage{graphicx}
\usepackage{amsmath}
\usepackage{amscd}
\usepackage[all]{xy}           
\usepackage{amsfonts,latexsym}
\usepackage{xspace}
\usepackage{epsfig}

\usepackage{epstopdf}
\usepackage{subfig}
\usepackage{enumitem}
\usepackage{stmaryrd}
\usepackage{amsmath,amssymb,amsthm}

\usepackage{tikz-cd}
\usepackage{float}
\usepackage{color}
\usepackage{fancybox}
\usepackage{colordvi}
\usepackage{multicol}
\usepackage{colordvi}
\usepackage{tikz}
\usetikzlibrary{decorations.pathreplacing}
\usepackage{wasysym}
\usepackage[active]{srcltx} 
\ifpdf
  \usepackage[colorlinks,final,backref=page,hyperindex]{hyperref}
\else
  \usepackage[colorlinks,final,backref=page,hyperindex,hypertex]{hyperref}
\fi

\usepackage{extarrows}
\usepackage{enumitem}

\newcommand{\nc}{\newcommand}
\newcommand{\delete}[1]{}

\nc{\mlabel}[1]{\label{#1}}  
\nc{\mcite}[1]{\cite{#1}}  
\nc{\mref}[1]{\ref{#1}}  
\nc{\meqref}[1]{\eqref{#1}}  
\nc{\mbibitem}[1]{\bibitem{#1}} 

\delete{
\nc{\mlabel}[1]{\label{#1}  
{\hfill \hspace{1cm}{\small\tt{{\ }\hfill(#1)}}}}
\nc{\mcite}[1]{\cite{#1}{\small{\tt{{\ }(#1)}}}}  
\nc{\mref}[1]{\ref{#1}{{\tt{{\ }(#1)}}}}  
\nc{\meqref}[1]{\eqref{#1}{{\tt{{\ }(#1)}}}}  
\nc{\mbibitem}[1]{\bibitem[\bf #1]{#1}} 
}

\makeatletter

\newcommand{\Rmnum}[1]{\expandafter\@slowromancap\romannumeral #1@}
\makeatother

\newtheorem{theorem}{Theorem}[section]
\newtheorem{prop}[theorem]{Proposition}
\newtheorem{lemma}[theorem]{Lemma}
\newtheorem{coro}[theorem]{Corollary}
\theoremstyle{definition}
\newtheorem{defn}[theorem]{Definition}
\newtheorem{prop-def}{Proposition-Definition}[section]

\newtheorem{remark}[theorem]{Remark}

\newtheorem{conjecture}[theorem]{Conjecture}

\newtheorem{tempex}[theorem]{Example}
\newtheorem{tempexs}[theorem]{Examples}
\newenvironment{exam}{\begin{tempex}\rm}{\end{tempex}}

\nc{\Irr}{\mathrm{Irr}}
\nc{\ncrbw}{\calr}  
\nc{\NS}{U_{NS}}
\nc{\FN}{F_{\mathrm Nij}}
\nc{\dfgen}{V} \nc{\dfrel}{R}
\nc{\dfgenb}{\vec{v}} \nc{\dfrelb}{\vec{r}}
\nc{\dfgene}{v} \nc{\dfrele}{r}
\nc{\dfop}{\odot}
\nc{\dfoa}{\dfop^{(1)}} \nc{\dfob}{\dfop^{(2)}}
\nc{\dfoc}{\dfop^{(3)}} \nc{\dfod}{\dfop^{(4)}}
\nc{\mapm}[1]{\lfloor\!|{#1}|\!\rfloor}
\nc{\cmapm}[1]{\frakC(#1)}
\nc{\red}{\mathrm{Red}}
\nc{\cm}{C}
\nc{\supp}{\mathrm{Supp}}
\nc{\lex}{\mathrm{lex}}

\nc{\disp}[1]{\displaystyle{#1}}
\nc{\bin}[2]{ (_{\stackrel{\scs{#1}}{\scs{#2}}})}  
\nc{\bs}{\bar{S}} \nc{\ep}{\epsilon}
\nc{\dbigcup}{\stackrel{\bullet}{\bigcup}}
\nc{\la}{\longrightarrow} \nc{\cprod}{\ast} \nc{\rar}{\rightarrow}
\nc{\dar}{\downarrow} \nc{\labeq}[1]{\stackrel{#1}{=}}
\nc{\dap}[1]{\downarrow \rlap{$\scriptstyle{#1}$}}
\nc{\uap}[1]{\uparrow \rlap{$\scriptstyle{#1}$}}
\nc{\defeq}{\stackrel{\rm def}{=}} \nc{\dis}[1]{\displaystyle{#1}}
\nc{\dotcup}{\ \displaystyle{\bigcup^\bullet}\ }
\nc{\sdotcup}{\tiny{ \displaystyle{\bigcup^\bullet}\ }}
\nc{\fe}{\'{e}}
\nc{\hcm}{\ \hat{,}\ } \nc{\hcirc}{\hat{\circ}}
\nc{\hts}{\hat{\shpr}} \nc{\lts}{\stackrel{\leftarrow}{\shpr}}
\nc{\denshpr}{\den{\shpr}}
\nc{\rts}{\stackrel{\rightarrow}{\shpr}} \nc{\lleft}{[}
\nc{\lright}{]} \nc{\uni}[1]{\tilde{#1}} \nc{\free}[1]{\bar{#1}}
\nc{\freea}[1]{\tilde{#1}} \nc{\freev}[1]{\hat{#1}}
\nc{\dt}[1]{\hat{#1}}
\nc{\wor}[1]{\check{#1}}
\nc{\intg}[1]{F_C(#1)}
\nc{\den}[1]{\check{#1}} \nc{\lrpa}{\wr} \nc{\mprod}{\pm}
\nc{\dprod}{\ast_P} \nc{\curlyl}{\left \{ \begin{array}{c} {} \\
{} \end{array}
    \right .  \!\!\!\!\!\!\!}
\nc{\curlyr}{ \!\!\!\!\!\!\!
    \left . \begin{array}{c} {} \\ {} \end{array}
    \right \} }
\nc{\longmid}{\left | \begin{array}{c} {} \\ {} \end{array}
    \right . \!\!\!\!\!\!\!}
\nc{\lin}{\call} \nc{\ot}{\otimes}
\nc{\ora}[1]{\stackrel{#1}{\rar}}
\nc{\ola}[1]{\stackrel{#1}{\la}}
\nc{\scs}[1]{\scriptstyle{#1}} \nc{\mrm}[1]{{\rm #1}}
\nc{\margin}[1]{\marginpar{\rm #1}}   
\nc{\dirlim}{\displaystyle{\lim_{\longrightarrow}}\,}
\nc{\invlim}{\displaystyle{\lim_{\longleftarrow}}\,}
\nc{\mvp}{\vspace{0.5cm}}
\nc{\mult}{m}       
\nc{\svp}{\vspace{2cm}} \nc{\vp}{\vspace{8cm}}
\nc{\proofbegin}{\noindent{\bf Proof: }}
\nc{\proofend}{$\blacksquare$ \vspace{0.5cm}}
\nc{\sha}{{\mbox{\cyr X}}}  
\nc{\ncsha}{{\mbox{\cyr X}^{\mathrm NC}}}
\newfont{\scyr}{wncyr10 scaled 550}
\nc{\ssha}{\mbox{\bf \scyr X}}
\nc{\ncshao}{{\mbox{\cyr X}^{\mathrm NC,\,0}}}
\nc{\shpr}{\diamond}    
\nc{\shprc}{\shpr_c}
\nc{\shpro}{\diamond^0}    
\nc{\shpru}{\check{\diamond}} \nc{\spr}{\cdot}
\nc{\catpr}{\diamond_l} \nc{\rcatpr}{\diamond_r}
\nc{\lapr}{\diamond_a} \nc{\lepr}{\diamond_e} \nc{\sprod}{\bullet}
\nc{\un}{u}                 
\nc{\vep}{\varepsilon} \nc{\labs}{\mid\!} \nc{\rabs}{\!\mid}
\nc{\hsha}{\widehat{\sha}} \nc{\psha}{\sha^{+}} \nc{\tsha}{\tilde{\sha}}
\nc{\lsha}{\stackrel{\leftarrow}{\sha}}
\nc{\rsha}{\stackrel{\rightarrow}{\sha}} \nc{\lc}{\lfloor}
\nc{\rc}{\rfloor} \nc{\sqmon}[1]{\langle #1\rangle}
\nc{\altx}{\Lambda} \nc{\vecT}{\vec{T}} \nc{\piword}{{\mathfrak P}}
\nc{\lbar}[1]{\overline{#1}}
\nc{\dep}{\mathrm{dep}}

\nc{\id}{\mrm{id}}
\nc{\ab}{\mathbf{Ab}} \nc{\Alg}{\mathbf{Alg}}
\nc{\Bax}{\mathbf{CRB}} \nc{\Algo}{\mathbf{Alg}^0}
\nc{\cRB}{\mathbf{CRB}} \nc{\cRBo}{\mathbf{CRB}^0}
\nc{\RBo}{\mathbf{RB}^0} \nc{\BRB}{\mathbf{RB}}
\nc{\Dend}{\mathbf{DD}} \nc{\bfk}{{\bf k}} \nc{\bfone}{{\bf 1}}
\nc{\base}[1]{{a_{#1}}} \nc{\Cat}{\mathbf{Cat}}
 \nc{\DN}{\mathbf{DN}}
\nc{\NA}{\mathbf{NA}}
\nc{\SDN}{\mathbf{SDN}}
\nc{\Diff}{\mathbf{Diff}} \nc{\gap}{\marginpar{\bf
Incomplete}\noindent{\bf Incomplete!!}
    \svp}
\nc{\FMod}{\mathbf{FMod}} \nc{\Int}{\mathbf{Int}}
\nc{\Mon}{\mathbf{Mon}}
\nc{\RB}{\mathbf{RB}} \nc{\remarks}{\noindent{\bf Remarks: }}
\nc{\Rep}{\mathbf{Rep}} \nc{\Rings}{\mathbf{Rings}}
\nc{\Sets}{\mathbf{Sets}} \nc{\bfx}{\mathbf{x}}
\nc{\BA}{{\Bbb A}} \nc{\CC}{{\Bbb C}} \nc{\DD}{{\Bbb D}}
\nc{\EE}{{\Bbb E}} \nc{\FF}{{\Bbb F}} \nc{\GG}{{\Bbb G}}
\nc{\HH}{{\Bbb H}} \nc{\LL}{{\Bbb L}} \nc{\NN}{{\Bbb N}}
\nc{\QQ}{{\Bbb Q}} \nc{\RR}{{\Bbb R}} \nc{\TT}{{\Bbb T}}
\nc{\VV}{{\Bbb V}} \nc{\ZZ}{{\Bbb Z}}

\nc{\cala}{{\mathcal A}} \nc{\calb}{{\mathcal B}}
\nc{\calc}{{\mathcal C}}
\nc{\cald}{{\mathcal D}} \nc{\cale}{{\mathcal E}}
\nc{\calf}{{\mathcal F}} \nc{\calg}{{\mathcal G}}
\nc{\calh}{{\mathcal H}} \nc{\cali}{{\mathcal I}}
\nc{\calj}{{\mathcal J}} \nc{\call}{{\mathcal L}}
\nc{\calm}{{\mathcal M}} \nc{\caln}{{\mathcal N}}
\nc{\calo}{{\mathcal O}} \nc{\calp}{{\mathcal P}}
\nc{\calr}{{\mathcal R}} \nc{\cals}{{\mathcal S}} \nc{\calt}{{\mathcal T}}
\nc{\calw}{{\mathcal W}} \nc{\calx}{{\mathcal X}} \nc{\caly}{{\mathcal Y}} \nc{\calz}{{\mathcal Z}}
\nc{\CA}{\mathcal{A}}

\nc{\tred}[1]{\textcolor{red}{#1}} \nc{\tgreen}[1]{\textcolor{green}{#1}}
\nc{\tblue}[1]{\textcolor{blue}{#1}} \nc{\tpurple}[1]{\textcolor{purple}{#1}}

\nc{\li}[1]{\tpurple{\underline{Li:}#1 }}
\nc{\liadd}[1]{\tpurple{#1}}
\nc{\xing}[1]{\tblue{\underline{Xing:}#1 }}
\nc{\YZ}[1]{\tred{\underline{Yaozhou:} #1}}

\nc{\deleted}[1]{\delete{#1}}
\nc{\astarrow}{\overset{\raisebox{-2pt}{{\scriptsize $\ast$}}}{\rightarrow}}\nc{\tvarrow}[3]{#1\overset{(t,v)}{\longrightarrow}_{#3} #2}

\nc{\sym}{{\rm Sp_{2n}}(\mathbb{C})}
\nc{\dg}[1]{{\rm dg}(#1)}
\nc{\hl}[1]{{\rm hl}(#1)}
\nc{\co}[1]{\mathcal{C}_{#1}}
\nc{\Cco}{\mathcal{C}_{2n}^{c}}
\nc{\Cpar}{{\rm P}_c^{2n}}
\nc{\SMC}[3]{#1 \overset{#2}{\Rightarrow} #3}
\nc{\CSMT}[1]{\text{\rm CSMT}^n(#1)}
\nc{\trans}[2]{t_{#1,#2}^c}
\nc{\reW}[5]{W_{#1,#2}^{#3,#4,#5}}
\nc{\reM}[5]{M^{#1,#2}_{#3,#4,#5}}
\nc{\cin}[2]{\mathcal{I}_{#1,#2}^\circ}
\nc{\cout}[2]{\mathcal{O}_{#1,#2}^\circ}
\nc{\Cin}[2]{\mathcal{I}_{#1,#2}}
\nc{\Cout}[2]{\mathcal{O}_{#1,#2}}
\nc{\Coe}[2]{{\rm Coe}(#1)[#2]}
\nc{\RS}{{\rm rowsums}(m)}
\nc{\CS}{{\rm colsums}(m)}

\providecommand{\tilc}{\widetilde{C}_n}

\begin{document}
\title[Type C transpositions and strong marked tableaux]{Type C transpositions and strong marked tableaux}
%
\author{Yaozhou Fang}
\address{School of Mathematics and Statistics, Lanzhou University
Lanzhou, 730000, China
}
\email{fangyzh2024@lzu.edu.cn}

\author{Xing Gao$^*$}\thanks{*Corresponding author}
\address{School of Mathematics and Statistics, Lanzhou University,
Lanzhou, 730000, China; Gansu Provincial Research Center for Basic Disciplines of Mathematics and Statistics, Lanzhou, 730070, China
}
\email{gaoxing@lzu.edu.cn}

\date{\today}
\begin{abstract}
Motivated by the Morse--Seelinger conjecture on type C $k$-Schur functions,
we develop a permutation model for type C strong marked tableaux.
Using type C transpositions, we give an explicit correspondence with
the symmetric-core model that preserves the marking data. We also
describe the induced affine Weyl group action on bounded partitions.
\end{abstract}

\makeatletter
\@namedef{subjclassname@2020}{\textup{2020} Mathematics Subject Classification}
\makeatother
\subjclass[2020]{
05E05, 
05E10, 
14N15, 
}

\keywords{affine Weyl group of Type C, Cores, Type C $k$-Schur functions}

\maketitle

\tableofcontents

\setcounter{section}{0}

\allowdisplaybreaks

\section{Introduction}

\subsection{Affine Schubert calculus and the tableau problem}

A central problem in affine Schubert calculus is to represent Schubert
classes by symmetric functions whose coefficients have explicit
combinatorial interpretations. For the affine Grassmannian of
$SL_{k+1}(\mathbb{C})$, Lam identified the homology Schubert basis with
$k$-Schur functions~\cite{Lam}. The affine insertion theory of Lam,
Lapointe, Morse, and Shimozono connected these functions with strong
marked tableaux and Pieri rules~\cite{LLMS}. In this setting, affine
permutations, cores, and bounded partitions provide complementary
combinatorial models: permutations record the Coxeter-theoretic
structure, while cores and partitions describe the shapes and growth
underlying the tableaux.

For the symplectic group $\sym$, Lam, Schilling, and Shimozono constructed
Hopf-algebra isomorphisms
\[
H_*(Gr_{\sym})\cong\Gamma_{(n)},
\qquad
H^*(Gr_{\sym})\cong\Gamma^{(n)},
\]
and identified the homology Schubert classes with the type C $k$-Schur
functions $P_w^{(n)}$, indexed by affine Grassmannian elements
$w\in\tilc^0$~\cite[Theorems~1.3 and~1.4]{LSS10c}. This identification
motivates the search for a tableau description of these functions.
Morse and Seelinger proposed such a description using strong marked
tableaux on symmetric $2n$-cores, as recorded
in~\cite[Section~5.2]{See}.

The following conjecture is the starting point of our investigation.
The bijection $\mathfrak{a}$ and the tableau set $\CSMT{\kappa}$ are
recalled in Sections~\ref{s:Def} and~\ref{s:SMT}, respectively.
The conjecture appears in~\cite[Conjecture~5.2.6]{See}.
\begin{conjecture}
\label{conj:conjP}
Let $w\in\tilc^0$ and $\kappa=\mathfrak{a}(w)$. Then
\begin{equation}
\label{eq:conjP}
P_w^{(n)}({\bf x})
=
\sum_{T\in\CSMT{\kappa}}{\bf x}^{{\rm wt}(T)}.
\end{equation}
\end{conjecture}

An approach to Conjecture~\ref{conj:conjP} requires a description of the
tableaux that retains both their underlying strong covers and their
markings. The type A theory suggests that an explicit permutation model
is useful for this purpose, since it expresses the relevant chains in
terms of reflections and integer labels~\cite{LLMSSZ, LLMS}. Our aim is
to develop this description in type C and to make the related action on
bounded partitions equally explicit. We do not prove
Conjecture~\ref{conj:conjP} here; rather, we provide combinatorial tools
for studying its tableau model.

\subsection{Type C transpositions and marked covers}

The constructions in this paper use the known bijections
\[
\tilc^0
\xrightarrow{\ \mathfrak{a}\ }
\Cco
\xrightarrow{\ \mathfrak{p}\ }
\Cpar,
\]
where $\Cco$ is the set of symmetric $2n$-cores and $\Cpar$ consists of
partitions with largest part at most $2n$ in which every part of size at
most $n$ occurs at most once; see~\cite{HJ,See} and
Lemmas~\ref{lem:bij1} and~\ref{lem:bij2}. These bijections identify the
underlying indexing sets, but an explicit description of marked covers
requires additional information. Indeed, the skew shape of a strong
cover may contain several ribbon components, and a marking specifies
the content of the southeast-most box of one of these components.
Thus, identifying the reflection relating the two affine Grassmannian
elements is not enough: one must also recover the chosen marking from
the permutation data.

Our first main contribution is a transposition description that retains
this marking. We work with the realization of $\tilc$ as permutations
of $\mathbb{Z}$ recalled from~\cite{Shi}. Edge sequences encode cores
using integer labels outside $(n+1)\mathbb{Z}$, and
Proposition~\ref{prop:twoaction} identifies the resulting action on cores
with the corner-addition and corner-removal action of
Lemma~\ref{lem:action}. We then introduce type C transpositions
$\trans{i}{j}$ adapted to the periodicity and symmetry of these
permutations. Theorem~\ref{thm:reflection} gives the conjugation identity
\[
w\trans{i}{j}w^{-1}
=
\trans{w(i)}{w(j)},
\]
for $w\in\tilc$ and the indices specified in the theorem. This identity
relates transpositions acting on positions to transpositions acting on
values.

The key connection with the tableau marking is established in
Theorem~\ref{thm:main}. Let
$C=(\SMC{\tau}{c}{\kappa})$ be a strong marked cover, with
$\tau=\mathfrak{a}(w)$ and $\kappa=\mathfrak{a}(u)$. The theorem gives
indices $i,j\in\mathbb{Z}\setminus(n+1)\mathbb{Z}$ such that
\[
u=w\trans{i}{j},
\qquad
w(j)=u(i)=c+\left\lfloor\frac{c}{n}\right\rfloor+1.
\]
The second equality is essential: it links the content of the selected
ribbon head to a specific value of the affine permutations, rather than
recording only the unmarked cover. Applying this description to
successive covers gives permutation formulations of strong marked
strips and tableaux, with the increasing-mark condition and the strip
sizes retained. The contribution therefore lies in making the
reflection and marking data explicit, beyond the identification of
cores with affine Grassmannian elements.

\subsection{The induced action on bounded partitions}
Our second main contribution concerns the simple-reflection action on
the partition model. The action of $\tilc$ on symmetric cores and the
bijection $\mathfrak{p}$ induce an action on $\Cpar$ by
\[
w\cdot\lambda
:=
\mathfrak{p}\bigl(w\cdot\mathfrak{p}^{-1}(\lambda)\bigr),
\qquad w\in\tilc,
\quad \lambda\in\Cpar.
\]
Although this formula defines the action abstractly, it does not show
which parts of $\lambda$ change. Moreover, a simple reflection can add
or remove several corners of the associated core at once, so its effect
on the bounded partition is not immediately apparent from the change
in the core diagram.

Theorem~\ref{thm:silambda} resolves this issue by giving a one-box rule.
For $\kappa=\mathfrak{p}^{-1}(\lambda)$ and $i\in[0,n]$, its conclusion
can be written on the partition side as
\[
s_i\cdot\lambda
=
\begin{cases}
\lambda+\epsilon_a,
& \text{if }\dg{\kappa}\subsetneq\dg{s_i\cdot\kappa},\\
\lambda-\epsilon_r,
& \text{if }\dg{\kappa}\supsetneq\dg{s_i\cdot\kappa},\\
\lambda,
& \text{otherwise}.
\end{cases}
\]
In the first case, $a$ is the largest row index $c$ of an addable corner
$(c,d)$ of $\kappa$ satisfying $d\geq c$ and having residue congruent to
$i$ or $2n-i$ modulo $2n$. In the second case, $r$ is defined by the
same rule using removable corners. Thus, the theorem specifies not only
that a single part changes by one, but also the row in which that change
occurs.

The proof combines the structure of corners in $2n$-strings with the
hook-length description of $\mathfrak{p}$. In particular,
Proposition~\ref{prop:stringhook} identifies the boxes whose hook
lengths cross the $2n$-bound under a simple reflection. This analysis
explains how a simultaneous change of several core corners results in
a change in just one row of the bounded partition. Together with the
marked-cover description, the one-box rule gives explicit ways to
handle both strong covers and simple-reflection actions within the
same permutation--core--partition framework.

\smallskip

\noindent{\bf Outline of the paper.}
Section~\ref{s:Def} recalls the affine-permutation realization of
$\tilc$, its action on symmetric cores, and the two indexing bijections
in Lemmas~\ref{lem:bij1} and~\ref{lem:bij2}.
Section~\ref{s:SMT} reviews the core model for strong marked tableaux
and introduces edge sequences. After proving the compatibility of the
two core actions in Proposition~\ref{prop:twoaction}, it develops type C
transpositions and establishes the conjugation identity in
Theorem~\ref{thm:reflection} and the marked-cover description in
Theorem~\ref{thm:main}.
Section~\ref{s:action} studies the corner strings and hook-length
changes, culminating in Proposition~\ref{prop:stringhook} and the
explicit partition action in Theorem~\ref{thm:silambda}.

\smallskip
\noindent{\bf Notation.}
Throughout the paper, $n\geq2$. For integers $a,b$, write
$[a,b]=\{a,a+1,\ldots,b\}$, with $[a,b]=\emptyset$ when $a>b$.
Let $\epsilon_a$ denote the finitely supported sequence with a $1$ in
position $a$ and zeros elsewhere. For a finite integer sequence
$\gamma$, write $|\gamma|=\sum_a\gamma_a$.

\section{Preliminaries}\mlabel{s:Def}
In this section, we collect the background material and fix the notation that will be used throughout the paper. We first recall the realization of the affine Weyl group $\widetilde{C}_n$ as a group of permutations of $\mathbb{Z}$, together with the symmetry properties of these permutations and the notion of affine Grassmannian elements. We then review the combinatorial models for affine Grassmannian elements in terms of symmetric $2n$-cores and the corresponding class of partitions. In particular, we recall the action of $\widetilde{C}_n$ on symmetric cores and the bijections
\[
\widetilde{C}_n^0
\longleftrightarrow
\Cco
\longleftrightarrow
\Cpar.
\]
These realizations provide the basic dictionary between affine permutations, cores, and partitions that will be used in the subsequent construction of type C strong marked tableaux and in the study of the induced action on partitions.

\subsection{Affine Weyl group of type C}
In this subsection, we review some facts of permutations in the affine Weyl group of type C studied in~\cite{Shi}.
The affine Weyl group $\tilc$ of type C has generators $\{s_0, s_1, \ldots, s_n \}$ and relations
\begin{equation}
\begin{aligned}
s_i^2=&\ 1, \quad \text{for all $i\in[0,n]$}, \\
s_i s_j =&\ s_j s_i, \quad \text{if $|i-j|>1$}, \\
s_i s_{i+1} s_i =&\ s_{i+1} s_i s_{i+1}, \quad \text{if $1\leq i \leq n-2$},\\
s_0 s_1 s_0 s_1 =&\ s_1 s_0 s_1 s_0, \\
s_{n-1} s_n s_{n-1} s_n =&\ s_n s_{n-1} s_n s_{n-1}.
\end{aligned}
\mlabel{eq:Cgene}
\end{equation}
Given a $w\in\tilc$, denote $\ell(w)$ be the {\em length} of $w$ if there is a shortest expression $w = s_{i_1}\cdots s_{i_{\ell(w)}}$ of $w$. We call such an expression $w = s_{i_1}\cdots s_{i_{\ell(w)}}$ a {\em reduced expression}.

Shi~\cite{Shi} showed that the generators $\{s_0, s_1, \ldots, s_n \}$ in $\tilc$ represent the permutations on $\mathbb{Z}$ as the following Lemma.

\begin{lemma}{\rm (\cite[\S1.4]{Shi})}
Let $s_i\in\tilc$ with $i\in[0,n]$ and $x\in \mathbb{Z}$.
\begin{enumerate}
\item
If $0< i < n$, then
\begin{equation*}
s_i(x) =
\left\{
\begin{array}{ll}
x, & \quad \text{{\rm if} $x \neq \pm i, \pm (i+1)\,{\rm mod}\, (2n+2)$},\\
x+1, & \quad \text{{\rm if} $x = i, -i-1\,{\rm mod}\, (2n+2)$},\\
x-1, & \quad \text{{\rm if} $x = i+1, -i\,{\rm mod}\, (2n+2)$}.
\end{array}
\right.
\end{equation*}
\mlabel{it:Cperm1}

\item If $i=0$, then
\begin{equation*}
s_0(x) =
\left\{
\begin{array}{ll}
x, & \quad \text{{\rm if} $x \neq \pm 1\,{\rm mod}\, (2n+2)$},\\
x+2, & \quad \text{{\rm if} $x = -1\,{\rm mod}\, (2n+2)$},\\
x-2, & \quad \text{{\rm if} $x = 1\,{\rm mod}\, (2n+2)$}.
\end{array}
\right.
\end{equation*}
\mlabel{it:Cperm2}

\item If $i=n$, then
\begin{equation*}
s_n(x) =
\left\{
\begin{array}{ll}
x, & \quad \text{{\rm if} $x \neq n,n+2\,{\rm mod}\, (2n+2)$},\\
x+2, & \quad \text{{\rm if} $x = n\,{\rm mod}\, (2n+2)$},\\
x-2, & \quad \text{{\rm if} $x = n+2\,{\rm mod}\, (2n+2)$}.
\end{array}
\right.
\end{equation*}
\mlabel{it:Cperm3}
\end{enumerate}
\mlabel{lem:Cperm}
\end{lemma}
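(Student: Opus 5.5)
The plan is to build the permutations first and then check that the abstract Coxeter group of \eqref{eq:Cgene} is faithfully realized by them.

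\textbf{Step 1: the ambient group and well-definedness.} Define explicit maps $\sigma_0,\dots,\sigma_n\colon\mathbb{Z}\to\mathbb{Z}$ by the formulas (1)--(3) of Lemma~\ref{lem:Cperm}. First check the following properties directly from the formulas:
\begin{itemize}
\item each $\sigma_i$ is an involutive bijection;
\item $\sigma_i(x+2n+2)=\sigma_i(x)+2n+2$;
\item $\sigma_i(-x)=-\sigma_i(x)$;
\item $\sigma_i$ fixes $(n+1)\mathbb{Z}$ pointwise.
\end{itemize}
So all $\sigma_i$ lie in the group $G$ of bijections $w$ of $\mathbb{Z}$ satisfying $w(x+2n+2)=w(x)+2n+2$ and $w(-x)=-w(x)$.

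Any such $w$ is determined by the window $w(1),\dots,w(n)$. Reducing modulo $2n+2$ and tracking signs identifies $G$ with $B_n\ltimes(2n+2)\mathbb{Z}^n$, where $B_n$ is the hyperoctahedral group. Here $w(i)=\varepsilon_i\pi(i)+(2n+2)m_i$ with $\pi\in S_n$, $\varepsilon_i=\pm1$ and $m\in\mathbb{Z}^n$.

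Next verify the relations \eqref{eq:Cgene} for the $\sigma_i$. The cases are:
\begin{itemize}
\item Commutation for $|i-j|>1$ holds because the supports (residues $\pm i,\pm(i+1)$, resp.\ $\pm1$ or $n,n+2$) are disjoint.
\item The braid relations for $1\le i\le n-2$ hold because on residues $i,i+1,i+2$ and their negatives the maps act as adjacent transpositions of $S_3$.
\item The relation $(\sigma_0\sigma_1)^4=1$ is checked on the orbit of residues $\{\pm1,\pm2\}$. There $\sigma_0$ acts as a sign change of $1$ up to a translation, and $\sigma_1$ swaps $1,2$, so they generate a copy of $B_2$ twisted by translations. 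A direct computation on the window entries $1,2$ shows the product has order $4$.
\item The relation for $\sigma_{n-1},\sigma_n$ is the same computation near $n+1$.
\end{itemize}
Hence $s_i\mapsto\sigma_i$ extends to a homomorphism $\phi\colon\tilc\to G$.

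\textbf{Step 2: faithfulness.} Translate to affine geometry. Send $w\in G$ to the affine map of $\mathbb{R}^n$ given by $\pi,\varepsilon$ and the translation vector $m$. Under this dictionary:
\begin{itemize}
\item $\sigma_1,\dots,\sigma_{n-1}$ become the reflections $x_i\leftrightarrow x_{i+1}$;
\item $\sigma_n$ becomes $x_n\mapsto -x_n$;
\item $\sigma_0$ becomes the affine reflection $x_1\mapsto 1-x_1$.
\end{itemize}
Up to the normalization of the translation, these are the reflections in the walls of the fundamental alcove $\{\tfrac12\ge x_1\ge\cdots\ge x_n\ge0\}$ of type $C_n$. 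By the standard theory of affine reflection groups (Bourbaki, Humphreys), these reflections generate the affine Weyl group $W_0\ltimes Q^\vee$, with $Q^\vee\cong\mathbb{Z}^n$ for $C_n$. This group has exactly the Coxeter presentation \eqref{eq:Cgene}. Therefore $\phi$ composed with this affine realization is the standard isomorphism, so $\phi$ is injective. Comparing orders, $\phi$ is also onto $G$.

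\textbf{Main obstacle.} I expect the real work to be bookkeeping: checking the two order-$4$ relations and matching $\sigma_0$ and $\sigma_n$ with the correct affine wall reflections. The shifts by $2$ at residues $\pm1$ and $n,n+2$ must correspond precisely to the long-root reflections, with the translation normalized by $2n+2$. I would first test the dictionary on the window for $n=2$ before writing the general case.
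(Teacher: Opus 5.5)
The paper gives no proof of this lemma. It is quoted from Shi~\cite{Shi}, and the rest of the paper simply treats elements of $\tilc$ as permutations of $\mathbb{Z}$. Your proposal therefore supplies an argument the paper omits, and its structure is correct.

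\begin{itemize}
\item Checking \eqref{eq:Cgene} on the explicit maps $\sigma_i$ gives a homomorphism $\phi\colon\tilc\to G$.
\item Transporting $G$ injectively into affine maps of $\mathbb{R}^n$ (call this map $\Psi$) reduces both injectivity and surjectivity of $\phi$ to standard facts. The reflections in the walls of any alcove $A$ of the $C_n$ arrangement generate $W_a=W(C_n)\ltimes Q^\vee=B_n\ltimes\mathbb{Z}^n$, using $Q^\vee(C_n)=\mathbb{Z}^n$. They also form a Coxeter generating set whose graph, read off from the dihedral angles, is that of $\tilc$.
\end{itemize}

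Compared with the bare citation, this gives the full identification $\tilc\cong G$, which the paper uses tacitly. Definition~\ref{defn:trans} calls $\trans{i}{j}$ ``the unique element of $\tilc$'' with a prescribed action on $\mathbb{Z}$. That needs both injectivity of $\phi$ and the fact that every odd, $(2n+2)$-periodic bijection lies in the image of $\phi$. Lemma~\ref{lem:CpermDes} is then immediate from $\phi(\tilc)\subseteq G$.

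Two bookkeeping points need fixing.

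First, with your window $w(1),\dots,w(n)$ the roles of $\sigma_0$ and $\sigma_n$ are reversed from what you wrote.
\begin{itemize}
\item $\sigma_0(1)=-1$ involves no translation, so $\sigma_0,\sigma_1$ generate an honest $B_2$.
\item $\sigma_n(n)=n+2=-n+(2n+2)$, so $\varepsilon_n=-1$ and $m_n=1$.
\end{itemize}
A correct dictionary is $\Psi(w)=\bigl(x\mapsto S_w(x+m_w)\bigr)$ with $S_we_i=\varepsilon_ie_{\pi(i)}$. It is a homomorphism because the window rule gives $m^{uv}_i=m^v_i+\varepsilon^v_i\,m^u_{\pi^v(i)}$, which you should state and check. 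It sends $\sigma_0\mapsto(x_1\mapsto -x_1)$, $\sigma_n\mapsto(x_n\mapsto -1-x_n)$, and $\sigma_1,\dots,\sigma_{n-1}$ to the coordinate swaps. These are the wall reflections of the alcove $\{0>x_1>\cdots>x_n>-\tfrac12\}$, not of the alcove you wrote. Your labeling, with $\sigma_n$ linear and $\sigma_0$ affine, only appears if you read the window around $n+1$, at positions $2n+1,\dots,n+2$. Since every alcove gives the same Coxeter graph, the conclusion is unaffected.

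Second, ``comparing orders'' is meaningless for infinite groups. Argue instead as follows. $\Psi$ is injective on $G$ with image exactly $B_n\ltimes\mathbb{Z}^n=W_a$. Since $\Psi\circ\phi$ is an isomorphism onto $W_a$, it follows that $\phi$ is injective and $\phi(\tilc)=G$.
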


Note that $s_0$ and $s_n$ are symmetric in the study of $\tilc$.
A more detailed description of permutations in $\tilc$ was provided by~\cite{Shi} as follows.

\begin{lemma}{\rm (\cite[\S1.5]{Shi})}
For any $w\in\tilc$ and $a, x \in\mathbb{Z}$,
\begin{equation}
w(x)+ w(2a(n+1)-x) = 2a(n+1)
\mlabel{eq:CpermDesSym}
\end{equation}
In particular,
\begin{align*}
  w(x) = -w(-x), \qquad  w(a(n+1)) = a(n+1).
\end{align*}
\mlabel{lem:CpermDes}
\end{lemma}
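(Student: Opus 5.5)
Since $\tilc$ is generated by $s_0,\ldots,s_n$, the plan is to first prove the identity \eqref{eq:CpermDesSym} for each generator. Then we show that the set of bijections of $\mathbb{Z}$ satisfying \eqref{eq:CpermDesSym} for all $a$ is closed under composition. Here $w$ is identified with the permutation of $\mathbb{Z}$ determined by Lemma~\ref{lem:Cperm}, with composition as the group law.

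For the generators we use Lemma~\ref{lem:Cperm} directly. The key observation is that \eqref{eq:CpermDesSym} for all $a$ is equivalent to two conditions, each of which is an affine symmetry. The first is oddness, $w(-x)=-w(x)$, which is the case $a=0$. The second is the condition at $a=1$, namely $w(2(n+1)-x)=2(n+1)-w(x)$. These combine to give the translation rule $w(x+2(n+1))=w(x)+2(n+1)$. Conversely, oddness together with this $(2n+2)$-periodicity implies \eqref{eq:CpermDesSym} for every $a$, since
\[
w(2a(n+1)-x)=w(-x)+2a(n+1)=2a(n+1)-w(x).
\]
So for each $s_i$ it suffices to check periodicity and oddness.

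Periodicity is immediate, because every case of Lemma~\ref{lem:Cperm} depends only on $x \bmod (2n+2)$ and shifts $x$ by a constant. Oddness is checked residue class by residue class.
\begin{enumerate}
\item For $0<i<n$: if $x\equiv i$ then $-x\equiv -i$, and indeed $s_i(x)=x+1$ while $s_i(-x)=-x-1$. Likewise, if $x\equiv -i-1$ then $-x\equiv i+1$, and the two cases swap consistently. The fixed classes are closed under negation.
\item For $s_0$: the classes $\pm1$ are exchanged by negation, with shifts $\mp 2$.
\item For $s_n$: the classes $n$ and $n+2\equiv -n$ are exchanged by negation, with shifts $\pm2$.
\end{enumerate}

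Closure under composition is then clear. If $u$ and $v$ both commute with $x\mapsto -x$ and with $x\mapsto x+2(n+1)$, then so does $uv$. Since the generators are involutions, inverses are automatic, so every $w\in\tilc$ satisfies \eqref{eq:CpermDesSym}.

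The particular statements follow at once. Taking $a=0$ gives $w(x)=-w(-x)$. Taking $x=a(n+1)$ gives $2w(a(n+1))=2a(n+1)$, hence $w(a(n+1))=a(n+1)$.

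The only real care needed is in the residue bookkeeping for $s_n$, where $n+2\equiv -n \pmod{2n+2}$. The computation itself is routine, so I expect no genuine obstacle here.
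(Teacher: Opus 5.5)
Your argument is correct. The paper itself gives no proof of this lemma. It quotes it from Shi, as part of the description of the permutation realization of $\tilc$, and then uses it as a black box, for instance in Case~B of Remark~\ref{rk:3cases}.

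Your route is a direct, self-contained verification, and it runs in three steps:
\begin{enumerate}
\item You observe that \eqref{eq:CpermDesSym} for all $a$ is equivalent to $w$ commuting with $x\mapsto -x$ and with $x\mapsto x+2n+2$.
\item You check these two commutations on the generators of Lemma~\ref{lem:Cperm}. The only point needing care is $n+2\equiv -n \pmod{2n+2}$ for $s_n$, and you handle it.
\item You note that the bijections commuting with two fixed maps form a group.
\end{enumerate}

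What this buys is that the lemma rests only on Lemma~\ref{lem:Cperm}, together with the standing assumption that these generator permutations define an action of $\tilc$ on $\mathbb{Z}$. Each $w$ then acts as a composite of generator permutations, and your closure argument never uses the Coxeter relations themselves.

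Your argument yields only the inclusion of $\tilc$ into the group of odd, $(2n+2)$-periodic bijections of $\mathbb{Z}$. That is exactly what the statement asserts; the converse inclusion would be extra information not required here.
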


The Weyl group $C_n$ of type C, generated by $\{ s_1,\ldots,s_n \}$, is a subgroup of $\tilc$. Let $\tilc^0$ be the minimal length coset representatives of $\tilc/C_n$. That is, for $w\in\tilc^0$, either $w=\id$ or $s_0$ is the only generator such that $\ell(ws_0)<\ell(w)$. The $\tilc^0$ is called the set of {\em affine Grassmannian elements} of $\tilc$.

\begin{exam}
Let $n=2$. Then $$w=s_2s_1s_2s_0s_1s_0s_2s_1s_0\in\tilc^0,$$ and the results of permuting $\mathbb{Z}$ by $w$ is:
\begin{equation*}
\begin{tabular}{c c c c c c c c}
-2 & -1 & 0 & 1 & 2 & 3 & 4 & 5\\
\hline
\textcolor{red}{5} & \textcolor{red}{10} & 0 & \textcolor{red}{-10} & \textcolor{red}{-5} & 3 & \textcolor{red}{11} & \textcolor{red}{16}
\end{tabular}
\end{equation*}
\end{exam}

\subsection{Affine permutations of type C, cores, and partitions}\mlabel{ss:PerCorePar}
This subsection is used to review bijections between affine permutations of type C, cores, and partitions. These bijections were first stated in~\cite{LSS10c}, and generated to types B and D by~\cite{HJ}. Later, the combinatorial version of such bijections were given by~\cite{See}.

Recall some notation used in~\cite{BMPS19,LLMSSZ,LSS10c}.
\begin{enumerate}[label=(\roman*)]
\item A {\em partition} $\lambda:=(\lambda_1,\ldots,\lambda_{\ell(\lambda)})$ is a sequence of weakly decreasing positive integers, where $\ell(\lambda)$ is the number of the entries of $\lambda$. We adopt a general convention that $\lambda_i = 0 $ if $i>\ell(\lambda)$, and let ${\rm P}$ be the set of all partitions.

\item The {\em Young (or Ferrers) diagram} of a partition $\lambda$ is the set of boxes $$\dg{\lambda}:=\{ (r,c)\in \mathbb{Z}_{\geq1}\times \mathbb{Z}_{\geq1} \mid c\leq \lambda_r\},$$ drawn in French notation so that rows (resp. columns) are increasing from south to north (resp. west to east). We use the convention $\dg{0} = \emptyset$.

\item Each box $c$ in a given Young diagram has the {\em hook length} $\hl{c}$, which counts the number of boxes strictly above it in its column and weakly to its right in its row.

\item An {\em $r$-core} is a partition with no box has hook length $r$ in its Young diagram. Let $\co{r}$ be the set collecting all $r$-cores, and let
\begin{equation}
\Cco:= \{ \kappa\in\co{2n} \mid (i,j)\in\dg{\kappa}, \text{ if }\,(j,i)\in\dg{\kappa} \}
\mlabel{eq:core}
\end{equation}
be the set of symmetric $2n$-cores.

\item The {\em content} of a box $c:=(i,j)$ in a given Young diagram is $j-i$, and the {\em residue} of $c$ in this paper is the content of $c$ mod $2n$, denoted by ${\rm res}(c)$.

\item We say that a box $c:=(i,j)$ lies on the {\em main diagonal} if $i=j$.

\item A box $c$ is an {\em addable corner} (resp. {\em removable corner}) of a partition $\lambda$ if $c\notin\dg{\lambda}$ (resp. $c\in\dg{\lambda}$) and $\dg{\lambda}\sqcup \{c\}$ (resp. $\dg{\lambda}\setminus \{c\}$) is still a Young diagram of a partition.

\item For two partitions $\lambda$ and $\mu$ such that $\dg{\lambda}\subseteq\dg{\mu}$, define $\mu/\lambda:=\dg{\mu}/\dg{\lambda}$.

\end{enumerate}

We now review the first bijection between $\tilc^0$ and $\Cco$.

\begin{lemma}{\rm (\cite[Theorem~5.8]{HJ})}
There is a group action of $$\tilc \times \Cco \longrightarrow \Cco, \quad  (s_i, \kappa)\longmapsto s_i\cdot \kappa$$ such that
\begin{enumerate}
\item if $\kappa$ has at least one addable corner of residue $i$, then
$s_i\cdot\kappa$ is obtained from $\kappa$ by adding all addable corners
of residue $i$ or $2n-i$ modulo $2n$.
\mlabel{it:action1}

\item if $\kappa$ has at least one removable corner of residue $i$, then $s_i\cdot \kappa$ is obtained from $\kappa$ by removing all removable corners of residue $i$ and $2n-i$ modulo $2n$,
\mlabel{it:action2}

\item otherwise, $s_i\cdot \kappa  := \kappa$.
\mlabel{it:action3}

\end{enumerate}
\mlabel{lem:action}
\end{lemma}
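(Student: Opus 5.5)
The plan is to realize each symmetric $2n$-core $\kappa$ by its boundary (edge) sequence and read the operation $s_i\cdot\kappa$ of Lemma~\ref{lem:action} as a local swap of boundary steps. Concretely, I would encode $\kappa$ by the bi-infinite $0/1$ word $e=(e_x)_{x\in\mathbb{Z}+\frac12}$ recording the north/east steps of its boundary. Index the diagonals by content, so that a box of content $m$ sits between steps $m-\frac12$ and $m+\frac12$. A box of content $m$ is an addable corner exactly when the pattern at positions $(m-\tfrac12,m+\tfrac12)$ is "east then north". It is a removable corner exactly when the pattern is "north then east".

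With this encoding, the three clauses of the lemma become one uniform statement. Let $\sigma_i$ swap $e_{m-\frac12}$ and $e_{m+\frac12}$ simultaneously for every $m\equiv \pm i \pmod{2n}$. If $i\neq 0,n$, these position pairs are pairwise disjoint, so $\sigma_i$ is a well-defined involution on words. For $i=0$ (resp.\ $i=n$) the classes $i$ and $-i$ coincide, and $\sigma_i$ is again an involution.

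Next I would check that $\sigma_i$ preserves symmetric $2n$-cores and agrees with the case description.
\begin{enumerate}
\item Symmetry of $\kappa$ means that $e$ is anti-symmetric under $x\mapsto -x$, i.e.\ $e_{-x}=1-e_x$. The set of swapped pairs is stable under $m\mapsto -m$, so $\sigma_i$ preserves this anti-symmetry.
\item The $2n$-core property is equivalent to each of the $2n$ abacus runners (positions in a fixed class mod $2n$) being "flush". The swap exchanges runner $m-\frac12$ with runner $m+\frac12$ for $m\equiv i$, and likewise for $-i$. Exchanging two whole runners preserves flushness.
\item For the case description, fix one class. Because both runners are flush, on that pair of runners either all relevant boxes are addable, or all are removable, or nothing changes. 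This is the standard type A fact that $s_i$ acts on $r$-cores by adding all $i$-addable or removing all $i$-removable boxes. Symmetry forces the class $-i$ to behave the same way, so the swap adds or removes the boxes in both classes at once. This matches clauses (1)--(3), and it also shows that the mixed situation (an addable corner of residue $i$ together with a removable corner of residue $-i$) cannot occur.
\end{enumerate}

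It remains to verify the Coxeter relations~\eqref{eq:Cgene} for the operators $\sigma_i$, so that they define an action of $\tilc$. I would do this by identifying $\sigma_i$ with a generator of $\tilc$ acting on $\mathbb{Z}$ by the permutations of Lemma~\ref{lem:Cperm}. To set this up, relabel the $2n$ runners, together with the two fixed "walls" at $0$ and $n$ mod $n+1$, as the residues mod $2n+2$ outside $(n+1)\mathbb{Z}$. Then swapping runners $i$ and $i+1$ (and symmetrically $-i$ and $-i-1$) is exactly $s_i$ from Lemma~\ref{lem:Cperm}. The shifts by $2$ for $s_0$ and $s_n$ correspond to jumping over a wall. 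Under this dictionary the relations hold automatically, because the $s_i$ are permutations of $\mathbb{Z}$ satisfying~\eqref{eq:Cgene} by Shi's realization.

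The main obstacle is making this relabelling precise. One has to translate between positions mod $2n$ (contents) and labels mod $2n+2$ with the multiples of $n+1$ removed, via a bijection of the shape $c\mapsto c+\lfloor c/n\rfloor+1$, and then check the boundary cases $i=0$ and $i=n$ carefully. If the relabelling resists a clean treatment, the fallback is to verify the braid relations directly on edge words. Each relation involves only the runners of residues in $\{\pm i,\pm(i\pm1)\}$, so it reduces to a finite check of runner permutations. The order-$4$ relations for $s_0s_1$ and $s_{n-1}s_n$ then come down to the dihedral group of order $8$ acting on four runners.
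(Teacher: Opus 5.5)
Your proof is correct. Note that the paper does not prove this lemma: it imports it from \cite[Theorem~5.8]{HJ}. The nearest internal argument, Proposition~\ref{prop:twoaction}, runs the other way: it takes $\cdot$ as given and checks that it agrees with the edge-sequence action $*$ built from Shi's permutations.

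You instead \emph{define} the action by the runner swaps $\sigma_i$ and derive clauses (1)--(3) from it.
\begin{itemize}
\item The anti-symmetry $e_{-x}=1-e_x$ is preserved, so symmetry is preserved.
\item Flushness is preserved, because whole runners are exchanged.
\item Comparing the thresholds of the two flush runners shows that addable and removable corners of residue $i$ never coexist.
\end{itemize}
That last fact is needed for clauses (1) and (2) to be consistent. The proof of Proposition~\ref{prop:twoaction} uses it tacitly when it concludes that the swaps only add boxes.

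The step you flag as the main obstacle is routine; it is essentially Subcases~1.1--1.3 of that proposition. Write $c=qn+r$ with $r\in[0,n-1]$. Then
$c+\tfrac12\mapsto c+\lfloor c/n\rfloor+1=q(n+1)+r+1$
is an order-preserving bijection $\mathbb{Z}+\tfrac12\to\mathbb{Z}\setminus(n+1)\mathbb{Z}$ that intertwines the shifts by $2n$ and $2n+2$. It sends the step pairs bounding contents $m\equiv i$ and $m\equiv -i$ to the labels $\{i,i+1\}$ and $\{-i-1,-i\}$ modulo $2n+2$ when $1\le i\le n-1$. For $i=0$ and $i=n$ it sends them to $\{-1,1\}$ and $\{n,n+2\}$ respectively. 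This is exactly Lemma~\ref{lem:Cperm}, so the relations~\eqref{eq:Cgene} transfer from Shi's realization and your fallback braid check is not needed.

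Two small slips, both harmless. The ``walls'' are the labels in $(n+1)\mathbb{Z}$, i.e.\ $0$ and $n+1$ modulo $2n+2$, not ``$0$ and $n$ mod $n+1$''. Your addable/removable boundary patterns also appear interchanged; this does not matter, since $\sigma_i$ is the same swap either way.

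As for what each route buys: yours gives a self-contained proof inside the paper's own edge-sequence framework, and makes Proposition~\ref{prop:twoaction} true by construction. The citation instead relies on the abacus framework of \cite{HJ}, which also covers types $B$ and $D$.
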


\begin{prop}
For $i\in[0,n]$ and $\kappa\in\Cco$, the addable (resp. removable) corners of $\kappa$ with residue $i$ and $2n-i$ {\rm mod} $2n$ are symmetric with respect to the main diagonal of $\kappa$.
\mlabel{prop:symm}
\end{prop}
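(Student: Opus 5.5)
The statement follows from the fact that reflection across the main diagonal is an automorphism of $\dg{\kappa}$ that negates contents. The plan is to define the transpose map $\sigma(r,c)=(c,r)$ on boxes of $\mathbb{Z}_{\geq1}\times\mathbb{Z}_{\geq1}$ and prove two things. First, $\sigma$ maps addable corners of $\kappa$ bijectively onto addable corners of $\kappa$, and likewise for removable corners. Second, $\sigma$ sends a box of residue $i$ to a box of residue $2n-i$ modulo $2n$, and vice versa.

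For the first step, use the definition \eqref{eq:core}: since $\kappa\in\Cco$, $\dg{\kappa}$ is invariant under $\sigma$. Let $b=(r,c)$ be an addable corner, so $b\notin\dg{\kappa}$ and $\dg{\kappa}\sqcup\{b\}$ is a Young diagram. Then $\sigma(b)\notin\dg{\kappa}$, because $\sigma$ is an involution preserving $\dg{\kappa}$. Moreover
\[
\dg{\kappa}\sqcup\{\sigma(b)\}=\sigma\bigl(\dg{\kappa}\sqcup\{b\}\bigr).
\]
The transpose of a Young diagram is again a Young diagram (the conjugate partition), so $\sigma(b)$ is an addable corner. The removable case is identical, with $\dg{\kappa}\setminus\{b\}$ in place of $\dg{\kappa}\sqcup\{b\}$. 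Since $\sigma$ is an involution, both correspondences are bijections.

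For the second step, observe that the content of $(c,r)$ is $r-c=-(c-r)$. Hence ${\rm res}(\sigma(b))\equiv -{\rm res}(b)$ modulo $2n$. So a corner of residue $i$ is sent to a corner of residue $-i\equiv 2n-i$, and conversely. Consequently the set of addable (resp. removable) corners of residue $i$ or $2n-i$ is stable under $\sigma$, which is exactly the symmetry claimed in the statement.

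The only point needing a remark is the degenerate cases $i=0$ and $i=n$, where $i\equiv 2n-i$. There the set in question is a single residue class, and the argument above shows it is $\sigma$-stable; corners lying on the diagonal are fixed by $\sigma$. No step here is a real obstacle. The one place to be careful is the check that transposing $\dg{\kappa}\sqcup\{b\}$ stays within the class of Young diagrams, which is the standard fact that conjugation of partitions corresponds to transposing diagrams.
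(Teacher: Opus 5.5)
Your proof is correct, but it takes a different route from the paper. The paper argues through the group action. For an addable corner $c=(a,b)$ of residue $i$, Lemma~\ref{lem:action} places $c$ in $s_i\cdot\kappa/\kappa$. Since both $\kappa$ and $s_i\cdot\kappa$ are symmetric, $(b,a)$ also lies in $s_i\cdot\kappa/\kappa$, and by the same lemma this set consists of addable corners of residue $i$ or $2n-i$. The removable case is obtained by reversing the process.

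You instead argue directly on $\dg{\kappa}$: the transpose $\sigma$ fixes $\dg{\kappa}$, carries $\dg{\kappa}\sqcup\{b\}$ (resp. $\dg{\kappa}\setminus\{b\}$) to another Young diagram, and negates contents. Your version is more elementary and self-contained:
\begin{itemize}
\item it does not depend on the cited fact that the action preserves $\Cco$, nor on the $2n$-core hypothesis, since any self-conjugate partition works;
\item it treats addable and removable corners by the same computation;
\item it works in both directions (residue $i$ to $2n-i$ and back) without first needing a corner of residue $i$ to trigger $s_i$;
\item it covers $i\in\{0,n\}$ by the same argument, whereas the paper only remarks that $i\equiv 2n-i$.
\end{itemize}
The paper's phrasing, in turn, ties the symmetry directly to the set $s_i\cdot\kappa/\kappa$, which is the form used later in Proposition~\ref{prop:twoaction} and Corollary~\ref{coro:2string}. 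Your involution $\sigma$ is exactly the bijection asserted in that corollary.
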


\begin{proof}
We first consider the addable case.
If $i = 0$ or $n$, then the statement is true by $ i = $ $2n-i$ {\rm mod} $2n$. Otherwise, suppose $i\in[1,n-1]$ and $c:=(a,b)$ is an addable corner of $\kappa$ with residue $i = b-a$ mod $2n$. By symmetry,
$$c':= (b,a)\in s_i\cdot \kappa/\kappa, \text{ with }\, {\rm res}(c') = a-b\,\text{ mod $2n$}\, = 2n-i,$$
which follows that $c'$ is an addable corner of $\kappa$ with residue $2n-i$ {\rm mod} $2n$.
The case of removable is analogous, since addable and removable are inverse processes.
\end{proof}

\begin{lemma}{\rm (\cite{HJ}, \cite[Lemma~5.1.17]{See})} There is bijection
\begin{align*}
\mathfrak{a}: \tilc^0   \longrightarrow \, \Cco,  \quad
w   \longmapsto \, w\cdot \emptyset \, .
\end{align*}
\mlabel{lem:bij1}
\end{lemma}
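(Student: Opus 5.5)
The plan is to show that the map $\mathfrak{a}(w)=w\cdot\emptyset$ is well defined, surjective onto $\Cco$, and injective on $\tilc^0$, using only the action of Lemma~\ref{lem:action}. Well-definedness is immediate, because Lemma~\ref{lem:action} gives a group action of $\tilc$ on $\Cco$ and $\emptyset\in\Cco$.

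\textbf{Step 1: a single generator changes the size.} Fix $\kappa\in\Cco$ and $i\in[0,n]$. By Lemma~\ref{lem:action} and Proposition~\ref{prop:symm}, $s_i\cdot\kappa$ either adds all addable corners of residue $\pm i$, removes all removable ones, or equals $\kappa$. A core cannot have both an addable and a removable corner of the same residue. Hence exactly one of three things happens: $|s_i\cdot\kappa|>|\kappa|$, $|s_i\cdot\kappa|<|\kappa|$, or $s_i\cdot\kappa=\kappa$.

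\textbf{Step 2: surjectivity.} I will induct on $|\kappa|$. If $\kappa\neq\emptyset$, take any removable corner of $\kappa$ and let $i\in[0,n]$ be the representative of its residue up to sign. Then $\kappa':=s_i\cdot\kappa$ is strictly smaller and is again a symmetric $2n$-core. By induction $\kappa'=v\cdot\emptyset$ for some $v$, so $\kappa=s_iv\cdot\emptyset$.

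To land in $\tilc^0$, I note that $\emptyset$ has exactly one addable corner, namely $(1,1)$, which has residue $0$. So $s_j\cdot\emptyset=\emptyset$ for $j\in[1,n]$, and the stabiliser of $\emptyset$ contains $C_n$. Replacing $v$ by the minimal coset representative of $vC_n$ does not change $v\cdot\emptyset$, which gives a preimage in $\tilc^0$.

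\textbf{Step 3: injectivity.} This is the main obstacle: I must show the stabiliser of $\emptyset$ is exactly $C_n$. My plan is to prove, by induction on $\ell(w)$, that for $w\in\tilc^0$ the size $|w\cdot\emptyset|$ grows with every left multiplication that keeps $w$ in $\tilc^0$ with $\ell(s_iw)=\ell(w)+1$. Equivalently, I will show that $s_i\cdot(w\cdot\emptyset)$ adds boxes iff $s_iw>w$ with $s_iw\in\tilc^0$, removes boxes iff $s_iw<w$, and fixes the core iff $s_iw\notin\tilc^0$, i.e. $s_iw=ws_j$.

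I would prove this through a compatible description of $\tilc^0$ in the window notation of Lemma~\ref{lem:CpermDes}. One reads addable and removable residues of $w\cdot\emptyset$ from the relative order of the values $w(x)$ at consecutive positions of each residue class, which is the standard abacus or edge-sequence argument. Since this is essentially the content of \cite[Theorem~5.8]{HJ}, I would cite it there if a self-contained check becomes too long.

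Granting the claim, $\ell(w)$ equals the number of diagonal-symmetric orbits of boxes added. Now suppose $w\cdot\emptyset=w'\cdot\emptyset$ with $w,w'\in\tilc^0$. Descending along removable corners forces the reduced words to agree step by step, up to the commutation and braid relations, so $w=w'$.
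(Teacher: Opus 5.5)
The paper contains no proof of this lemma; it is quoted from \cite{HJ} and \cite[Lemma~5.1.17]{See}. So your argument has to stand on its own. Well-definedness and surjectivity are fine. Stripping off a residue class of removable corners with Lemma~\ref{lem:action}(\ref{it:action2}), inducting on $|\kappa|$, and then passing to the minimal coset representative is a complete argument. The last step is legitimate because $s_1,\dots,s_n$ fix $\emptyset$.

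The gap is injectivity, which is the real content of the lemma. It rests entirely on the trichotomy claim of Step~3, and you do not prove it. You only point to ``the standard abacus argument'' or fall back on citing \cite{HJ}, so at the decisive point the proposal is no more self-contained than the paper's citation.

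The deduction you draw from the claim is also faulty, in two places.
\begin{itemize}
\item $\ell(w)$ is not the number of diagonal-symmetric orbits of boxes, because a single $s_i$ adds a whole $2n$-string together with its mirror. For $n=2$, $(4,3,2,1)=\mathfrak{a}(s_1s_0s_2s_1s_0)$ has length $5$ but $6$ orbits. The claim only says that each of the $\ell(w)$ steps strictly enlarges the core.
\item ``The reduced words agree up to braid relations'' is not an argument.
\end{itemize}
What the claim actually yields is an induction on $|\kappa|$. Suppose $w\cdot\emptyset=w'\cdot\emptyset=\kappa\neq\emptyset$ and $\kappa$ has a removable corner of residue $\pm i$. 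Then $s_i$ is a left descent of both $w$ and $w'$, so $s_iw,s_iw'\in\tilc^0$ have the same smaller core, and you recurse. The base case $\kappa=\emptyset$ forces $w=w'=\id$, since every step enlarges the core.

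You can bypass the trichotomy altogether, because injectivity only needs $\mathrm{Stab}(\emptyset)=C_n$. The argument runs as follows.
\begin{itemize}
\item Proposition~\ref{prop:twoaction} does not depend on this lemma. It shows that $w\cdot\emptyset$ is obtained from $p(\emptyset)$ by permuting labels via $w$.
\item $p(\emptyset)$ equals $1$ exactly at the negative labels. Hence $w\cdot\emptyset=\emptyset$ forces $w$ to preserve the set of negative labels.
\item Lemma~\ref{lem:CpermDes} gives $w(x+2n+2)=w(x)+2n+2$. Combined with the previous point, this gives $0<w(x)<2n+2$ for $x\in[1,2n+1]$, so $w$ preserves the window $[1,2n+1]$.
\item Such a $w$ is determined by the signed permutation it induces on $\{1,\dots,n\}$, identifying $2n+2-x$ with $-x$.
\item $s_1,\dots,s_{n-1}$ act as adjacent transpositions and $s_n$ as the sign change of $n$. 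So every signed permutation is realized inside $C_n$, and $w\in C_n$.
\end{itemize}
Thus $w\cdot\emptyset=w'\cdot\emptyset$ gives $w'\in wC_n$, and uniqueness of minimal coset representatives yields $w=w'$.
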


The following example illustrates explicitly how the bijection $\mathfrak{a}$ in Lemma~\mref{lem:bij1} is realized by successively applying the simple reflections to the empty core.

\begin{exam}
Let $n=2$ and consider $w=s_2s_1s_2s_0s_1s_0s_2s_1s_0\in\tilde{C}_2^0$. Then
$$\mathfrak{a}(w) = (7,5,5,3,3,1,1)\in\Cco,$$
via
\allowdisplaybreaks{
\begin{align*}
\emptyset & \xrightarrow{s_0}
\begin{tikzpicture}[scale=.3,line width=0.5pt,baseline=(a.base)]
\filldraw[blue!30,draw=black] (-1,-1) rectangle (0,0);
\node (a) [align=center] {\\[0pt] };
\end{tikzpicture}
\xrightarrow{s_1}
\begin{tikzpicture}[scale=.3,line width=0.5pt,baseline=(a.base)]
\draw[draw=black] (-1,-1) rectangle (0,0);
\filldraw[blue!30,draw=black] (0,-1) rectangle (1,0);
\filldraw[blue!60,draw=black] (-1,0) rectangle (0,1);
\node (a) [align=center] {\\[0pt] };
\end{tikzpicture}
\xrightarrow{s_2}
\begin{tikzpicture}[scale=.3,line width=0.5pt,baseline=(a.base)]
\draw[draw=black] (-1,-1) rectangle (0,0);
\draw[draw=black] (0,-1) rectangle (1,0);
\draw[draw=black] (-1,0) rectangle (0,1);
\filldraw[blue!30,draw=black] (1,-1) rectangle (2,0);
\filldraw[blue!30,draw=black] (-1,1) rectangle (0,2);
\node (a) [align=center] {\\[0pt] };
\end{tikzpicture}
\xrightarrow{s_0}
\begin{tikzpicture}[scale=.3,line width=0.5pt,baseline=(a.base)]
\draw[draw=black] (-1,-1) rectangle (0,0);
\draw[draw=black] (0,-1) rectangle (1,0);
\draw[draw=black] (-1,0) rectangle (0,1);
\draw[draw=black] (1,-1) rectangle (2,0);
\draw[draw=black] (-1,1) rectangle (0,2);
\filldraw[blue!30,draw=black] (0,0) rectangle (1,1);
\node (a) [align=center] {\\[0pt] };
\end{tikzpicture}
\xrightarrow{s_1}
\begin{tikzpicture}[scale=.3,line width=0.5pt,baseline=(a.base)]
\draw[draw=black] (-1,-1) rectangle (0,0);
\draw[draw=black] (0,-1) rectangle (1,0);
\draw[draw=black] (-1,0) rectangle (0,1);
\draw[draw=black] (1,-1) rectangle (2,0);
\draw[draw=black] (-1,1) rectangle (0,2);
\draw[draw=black] (0,0) rectangle (1,1);
\filldraw[blue!30,draw=black] (-1,2) rectangle (0,3);
\filldraw[blue!60,draw=black] (2,-1) rectangle (3,0);
\filldraw[blue!60,draw=black] (0,1) rectangle (1,2);
\filldraw[blue!30,draw=black] (1,0) rectangle (2,1);
\node (a) [align=center] {\\[0pt] };
\end{tikzpicture}
\xrightarrow{s_0}
\begin{tikzpicture}[scale=.3,line width=0.5pt,baseline=(a.base)]
\draw[draw=black] (-1,-1) rectangle (0,0);
\draw[draw=black] (0,-1) rectangle (1,0);
\draw[draw=black] (-1,0) rectangle (0,1);
\draw[draw=black] (1,-1) rectangle (2,0);
\draw[draw=black] (-1,1) rectangle (0,2);
\draw[draw=black] (0,0) rectangle (1,1);
\draw[draw=black] (-1,2) rectangle (0,3);
\draw[draw=black] (2,-1) rectangle (3,0);
\draw[draw=black] (0,1) rectangle (1,2);
\draw[draw=black] (1,0) rectangle (2,1);
\filldraw[blue!30,draw=black] (3,-1) rectangle (4,0);
\filldraw[blue!30,draw=black] (-1,3) rectangle (0,4);
\filldraw[blue!30,draw=black] (1,1) rectangle (2,2);
\node (a) [align=center] {\\[0pt] };
\end{tikzpicture}\\
& \xrightarrow{s_2}
\begin{tikzpicture}[scale=.3,line width=0.5pt,baseline=(a.base)]
\draw[draw=black] (-1,-1) rectangle (0,0);
\draw[draw=black] (0,-1) rectangle (1,0);
\draw[draw=black] (-1,0) rectangle (0,1);
\draw[draw=black] (1,-1) rectangle (2,0);
\draw[draw=black] (-1,1) rectangle (0,2);
\draw[draw=black] (0,0) rectangle (1,1);
\draw[draw=black] (-1,2) rectangle (0,3);
\draw[draw=black] (2,-1) rectangle (3,0);
\draw[draw=black] (0,1) rectangle (1,2);
\draw[draw=black] (1,0) rectangle (2,1);
\draw[draw=black] (3,-1) rectangle (4,0);
\draw[draw=black] (-1,3) rectangle (0,4);
\draw[draw=black] (1,1) rectangle (2,2);
\filldraw[blue!30,draw=black] (0,2) rectangle (1,3);
\filldraw[blue!30,draw=black] (2,0) rectangle (3,1);
\node (a) [align=center] {\\[0pt] };
\end{tikzpicture}
\xrightarrow{s_1}
\begin{tikzpicture}[scale=.3,line width=0.5pt,baseline=(a.base)]
\draw[draw=black] (-1,-1) rectangle (0,0);
\draw[draw=black] (0,-1) rectangle (1,0);
\draw[draw=black] (-1,0) rectangle (0,1);
\draw[draw=black] (1,-1) rectangle (2,0);
\draw[draw=black] (-1,1) rectangle (0,2);
\draw[draw=black] (0,0) rectangle (1,1);
\draw[draw=black] (-1,2) rectangle (0,3);
\draw[draw=black] (2,-1) rectangle (3,0);
\draw[draw=black] (0,1) rectangle (1,2);
\draw[draw=black] (1,0) rectangle (2,1);
\draw[draw=black] (3,-1) rectangle (4,0);
\draw[draw=black] (-1,3) rectangle (0,4);
\draw[draw=black] (1,1) rectangle (2,2);
\draw[draw=black] (0,2) rectangle (1,3);
\draw[draw=black] (2,0) rectangle (3,1);
\filldraw[blue!60,draw=black] (-1,4) rectangle (0,5);
\filldraw[blue!30,draw=black] (4,-1) rectangle (5,0);
\filldraw[blue!60,draw=black] (3,0) rectangle (4,1);
\filldraw[blue!30,draw=black] (0,3) rectangle (1,4);
\filldraw[blue!30,draw=black] (2,1) rectangle (3,2);
\filldraw[blue!60,draw=black] (1,2) rectangle (2,3);
\node (a) [align=center] {\\[0pt] };
\end{tikzpicture}
\xrightarrow{s_2}
\begin{tikzpicture}[scale=.3,line width=0.5pt,baseline=(a.base)]
\draw[draw=black] (-1,-1) rectangle (0,0);
\draw[draw=black] (0,-1) rectangle (1,0);
\draw[draw=black] (-1,0) rectangle (0,1);
\draw[draw=black] (1,-1) rectangle (2,0);
\draw[draw=black] (-1,1) rectangle (0,2);
\draw[draw=black] (0,0) rectangle (1,1);
\draw[draw=black] (-1,2) rectangle (0,3);
\draw[draw=black] (2,-1) rectangle (3,0);
\draw[draw=black] (0,1) rectangle (1,2);
\draw[draw=black] (1,0) rectangle (2,1);
\draw[draw=black] (3,-1) rectangle (4,0);
\draw[draw=black] (-1,3) rectangle (0,4);
\draw[draw=black] (1,1) rectangle (2,2);
\draw[draw=black] (0,2) rectangle (1,3);
\draw[draw=black] (2,0) rectangle (3,1);
\draw[draw=black] (-1,4) rectangle (0,5);
\draw[draw=black] (4,-1) rectangle (5,0);
\draw[draw=black] (3,0) rectangle (4,1);
\draw[draw=black] (0,3) rectangle (1,4);
\draw[draw=black] (2,1) rectangle (3,2);
\draw[draw=black] (1,2) rectangle (2,3);
\filldraw[blue!30,draw=black] (5,-1) rectangle (6,0);
\filldraw[blue!30,draw=black] (-1,5) rectangle (0,6);
\filldraw[blue!30,draw=black] (3,1) rectangle (4,2);
\filldraw[blue!30,draw=black] (1,3) rectangle (2,4);
\node (a) [align=center] {\\[0pt] };
\end{tikzpicture}
\, .
\end{align*}
}
Here the light blue boxes are the whole addable corners of residue $i$, and the dark blue boxes are the whole addable corners of residue $2n-i$ with $i\neq 0,n$, when $s_i$ acts on $\Cco$.
\mlabel{ex:bij1}
\end{exam}

Now we review the bijection between $\Cco$ and partitions. In the study of affine Schubert calculus for $\sym$~\cite{LSS10c}, the subscripts of Schubert classes are a subset of partitions defined as
\begin{equation*}
\Cpar
:=
\left\{
\lambda\in{\rm P}
\;\middle|\;
\lambda_1\leq 2n,\
\lambda_i>\lambda_{i+1}
\text{ whenever }
i<\ell(\lambda)\text{ and }\lambda_i\leq n
\right\}.
\end{equation*}

\begin{lemma}{\rm (\cite[Lemma~5.1.19]{See})}
There is a bijection $$\mathfrak{p}:\Cco\to\Cpar, \quad \kappa \longmapsto \mathfrak{p}(\kappa),$$  with  $\mathfrak{p}(\kappa)$ defined via the following procedure:
\begin{enumerate}
\item denote
\begin{equation*}
{\rm skew}(\kappa) := \{ c\in\dg{\kappa} \mid \hl{c}<2n \},
\end{equation*}

\item let
\begin{align*}
{\rm skew}^{\perp}(\kappa):=&\ \{ c:=(i,j)\in {\rm skew}(\kappa) \mid i<j \},  \\
{\rm skew}^{\perp}_a(\kappa):=&\ \{ c:=(i,j)\in {\rm skew}^{\perp}(\kappa)\mid i=a \},
\end{align*}
for some $a\in\mathbb{Z}_{\geq1}$,

\item $\mathfrak{p}(\kappa):= (\lambda_1, \ldots, \lambda_\ell)$, where
\begin{align*}
\ell:=&\ \#\{ c:= (i,j)\in \dg{\kappa} \mid i=j\},  \\
\lambda_a :=&\ 1+ \#{\rm skew}^{\perp}_a(\kappa), \,\text{ for }\, a\in[1,\ell].
\end{align*}
\end{enumerate}
\mlabel{lem:bij2}
\end{lemma}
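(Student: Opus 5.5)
The plan is to reduce the statement to the classical type A bijection between $(k+1)$-cores and $k$-bounded partitions due to Lapointe and Morse, taking $k=2n-1$, and then to track what the symmetry of $\kappa$ imposes. For a $2n$-core $\kappa$, let $\mu=\mu(\kappa)$ have parts $\mu_r=\#\{c\in{\rm skew}(\kappa)\mid c\text{ in row }r\}$. Lapointe--Morse show that $\kappa\mapsto\mu$ is a bijection from $\co{2n}$ onto the set of $(2n-1)$-bounded partitions. I will also use the inverse procedure: build the core row by row from the top, sliding each new row of $\mu_r$ boxes to the left as far as the hook condition allows. For $\kappa\in\Cco$ the data $\mathfrak{p}(\kappa)$ is read off from the part of ${\rm skew}(\kappa)$ lying on or to the right of the main diagonal. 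So the whole proof is to show that, on symmetric cores, this half of the skew diagram already determines $\kappa$, and that the halves that occur are exactly $\Cpar$.

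\emph{Step 1: well-definedness.} Fix $a\le \ell$ and put $m=\kappa_a-a$, the arm of the diagonal box $(a,a)$. By symmetry the leg of $(a,a)$ is also $m$, so $\hl{(a,a)}=2m+1$. The boxes $(a,j)$ with $j>a$ have hooks that strictly decrease as $j$ grows, and all hook lengths in a $2n$-core avoid $2n$. Hence ${\rm skew}^\perp_a(\kappa)$ is a terminal segment of row $a$ of length at most $2n-1$, which gives $\lambda_a\le 2n$.

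Next suppose $\lambda_a\le n$ and $a<\ell$. I will show that the box $(a,a)$ itself lies in ${\rm skew}(\kappa)$, i.e. $2m+1<2n$, so that $\lambda_a=1+m$ exactly.
\begin{itemize}
\item If $2m+1>2n$, then the hook of $(a,a+1)$ is $2m$, which is at least $2n$. The row-$a$ hooks run down from $2m$ to $1$ and skip $2n$.
\item The symmetric column hooks interleave with the row-$a$ hooks in a way that forces at least $n$ boxes strictly right of the diagonal to have hook $<2n$. I expect to verify this with the standard fact that in a core the hook values of a row are determined by a beta-set avoiding shifts by $2n$.
\item This contradicts $\lambda_a\le n$.
\end{itemize}
Once $\lambda_a=1+m$, strictness $\lambda_a>\lambda_{a+1}$ follows from $\kappa_{a+1}-(a+1)\le\kappa_a-a-1$. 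Therefore $\mathfrak{p}(\kappa)\in\Cpar$.

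\emph{Step 2: injectivity and surjectivity.} For a symmetric core, the full skew diagram is the union of the right half $\{(i,j)\in{\rm skew}(\kappa)\mid i<j\}$, its transpose, and the skew diagonal boxes. The rows $a\le\ell$ of $\mu(\kappa)$ are determined by $\lambda$ and by the diagonal data. Whether $(a,a)$ is skew is itself forced by $\lambda_a$, by the dichotomy in Step 1. The rows $r>\ell$ of $\mu$ come from columns $r>\ell$, which transpose to row segments already counted in $\lambda$. Hence $\mu(\kappa)$, and so $\kappa$ by Lapointe--Morse, is determined by $\lambda$; this gives injectivity. For surjectivity, given $\lambda\in\Cpar$ I will write down the candidate $(2n-1)$-bounded partition $\mu$ by the same recipe and take the corresponding core $\kappa$. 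It then remains to show that $\kappa$ is symmetric. I expect to prove this by checking that $\mu$ is fixed by the $(2n-1)$-conjugation of Lapointe--Morse, since that operation corresponds to transposing the core.

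\emph{Main obstacle.} The delicate point is the interplay on the diagonal: whether $(a,a)$ is skew, and the resulting identity $\lambda_a=1+m$ precisely when $\lambda_a\le n$. This is where the defining condition of $\Cpar$ (distinct parts $\le n$) must emerge, and where the self-$(2n-1)$-conjugacy check in surjectivity lives. My first attempt will go through abacus/beta-set language for $2n$-cores. There, symmetry of $\kappa$ becomes the condition that the bead set is invariant under $x\mapsto -1-x$ (suitably normalized), and the count of skew boxes in row $a$ becomes the number of gaps within distance $2n$ below a bead. These counts should make both the inequality in Step 1 and the symmetry check in Step 2 direct. If this becomes unwieldy, the fallback is induction on $|\kappa|$ using the action of Lemma~\ref{lem:action}: verify that each $s_i$ changes $\mathfrak{p}(\kappa)$ within $\Cpar$ by at most one box, and combine this with Lemma~\ref{lem:bij1} and a cardinality comparison by size.
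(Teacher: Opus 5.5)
\textbf{Where this stands.} The paper does not prove this lemma; it is quoted from \cite[Lemma~5.1.19]{See}. Your argument therefore has to stand on its own, and as written it does not: Step~1 needs small repairs, and Step~2 has a genuine gap.

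\textbf{Step 1 (well-definedness).} This step is essentially right, with two repairs.

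First, your first bullet is inaccurate. In fact ${\rm hl}(a,a+1)=m+\kappa_{a+1}-a$, which equals $2m$ only when $\kappa_{a+1}=\kappa_a$. The claim you actually need is nevertheless true and has a short proof. Put $R=\{\kappa_r-r+\tfrac12\mid r\geq1\}\subset\mathbb{Z}+\tfrac12$. Symmetry of $\kappa$ says the complement of $R$ is $-R$, and the $2n$-core condition says $R-2n\subseteq R$. Suppose $x:=\kappa_a-a+\tfrac12>n$, i.e.\ $(a,a)\notin{\rm skew}(\kappa)$. Then $\lambda_a-1=\#(W\setminus R)$, where $W=(x-2n,x)$ is a window of $2n-1$ half-integers. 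Send $y\in R\cap W$ to the representative of $-y$ modulo $2n$ in $(x-2n,x]$. The image lies outside $R$, hence is not $x$, so this map injects $R\cap W$ into $W\setminus R$. It follows that $\lambda_a\geq n+1$.

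Second, you never check $\lambda_a\geq\lambda_{a+1}$ when $\lambda_a>n$. This holds because then $\lambda_a-1=\mu_a$, and either $\lambda_{a+1}\leq n$ or $\lambda_{a+1}-1=\mu_{a+1}\leq\mu_a$.

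\textbf{Step 2 (bijectivity) is circular.} The claim ``$\mu(\kappa)$ is determined by $\lambda$'' does not hold as stated.
\begin{itemize}
\item For $a\le\ell$, $\mu_a$ also counts the skew boxes $(a,j)$ with $j<a$. These are the transposes of the boxes of ${\rm skew}^{\perp}(\kappa)$ lying in column $a$.
\item For $r>\ell$, $\mu_r$ is the number of boxes of ${\rm skew}^{\perp}(\kappa)$ in column $r$.
\end{itemize}
Since ${\rm skew}^{\perp}_b(\kappa)$ occupies columns $\kappa_b-\lambda_b+2,\ldots,\kappa_b$, both counts require the row lengths $\kappa_b$. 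When $\lambda_b>n$, the value $\kappa_b$ is not recorded by $\lambda_b$. For example, with $n=2$ the symmetric $4$-cores $(3,2,1)$ and $(5,3,3,1,1)$ both have $\lambda_1=3$, but $\kappa_1=3$ and $\kappa_1=5$ respectively.

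So your injectivity step presupposes exactly the recovery of $\kappa$ from $\lambda$ that injectivity is about. In the surjectivity step, the ``candidate $\mu$ by the same recipe'' is not defined for an arbitrary $\lambda\in\Cpar$. You also give no reason why it would be fixed by $(2n-1)$-conjugation.

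\textbf{What a repair needs.} You need an actual reconstruction of the Frobenius coordinates $\kappa_a-a$ from $\lambda$. Two possible routes:
\begin{itemize}
\item By symmetry, the leg of $(a,j)$ with $j>a$ depends only on $\kappa_{a+1},\ldots,\kappa_\ell$. So you can try to determine $\kappa_\ell,\ldots,\kappa_1$ in turn, with $\kappa_a=a+\lambda_a-1$ forced when $\lambda_a\le n$. You must then prove that for every $\lambda\in\Cpar$, exactly one admissible $\kappa_a$ exists at each step. That existence-and-uniqueness statement is the real content of the lemma.
\item Alternatively, work on the abacus, where a symmetric $2n$-core is determined by $n$ runner levels.
\end{itemize}

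\textbf{The fallback does not close the gap.} The one-box rule is essentially Theorem~\ref{thm:silambda}. Even granting it, a comparison by size still needs two further ingredients:
\begin{itemize}
\item a separate identity between the length generating function of $\tilc^0$ and the size generating function of $\Cpar$;
\item injectivity or surjectivity of $\mathfrak{p}$.
\end{itemize}
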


The following example illustrates the construction of the bijection $\mathfrak{p}$ in Lemma~\mref{lem:bij2} by determining the corresponding partition from the hook lengths and the associated sets ${\rm skew}(\kappa)$ and ${\rm skew}^{\perp}(\kappa)$.

\begin{exam}
Let $n=2$ and $\kappa:=(7,5,5,3,3,1,1) \in\Cco$.
Then $\mathfrak{p}(\kappa) = (3,3,3)\in\Cpar$ by
\begin{equation*}
\begin{tikzpicture}[scale=.3,line width=0.5pt,baseline=(a.base)]
\draw[draw=black] (-1,-4) rectangle (0,-3);\node at(-0.5,-3.5){\tiny\( 13 \)};
\draw[draw=black] (0,-4) rectangle (1,-3);\node at(0.5,-3.5){\tiny\( 10 \)};
\draw[draw=black] (-1,-3) rectangle (0,-2);\node at(-0.5,-2.5){\tiny\( 10 \)};
\draw[draw=black] (1,-4) rectangle (2,-3);\node at(1.5,-3.5){\tiny\( 9 \)};
\draw[draw=black] (-1,-2) rectangle (0,-1);\node at(-0.5,-1.5){\tiny\( 9 \)};
\draw[draw=black] (0,-3) rectangle (1,-2);\node at(0.5,-2.5){\tiny\( 7 \)};
\draw[draw=black] (-1,-1) rectangle (0,0);\node at(-0.5,-0.5){\tiny\( 6 \)};
\draw[draw=black] (2,-4) rectangle (3,-3);\node at(2.5,-3.5){\tiny\( 6 \)};
\draw[draw=black] (0,-2) rectangle (1,-1);\node at(0.5,-1.5){\tiny\( 6 \)};
\draw[draw=black] (1,-3) rectangle (2,-2);\node at(1.5,-2.5){\tiny\( 6 \)};
\draw[draw=black] (3,-4) rectangle (4,-3);\node at(3.5,-3.5){\tiny\( 5 \)};
\draw[draw=black] (-1,0) rectangle (0,1);\node at(-0.5,0.5){\tiny\( 5 \)};
\draw[draw=black] (1,-2) rectangle (2,-1);\node at(1.5,-1.5){\tiny\( 5 \)};
\draw[draw=black] (0,-1) rectangle (1,0);\node at(0.5,-0.5){\tiny\( 3 \)};
\draw[draw=black] (2,-3) rectangle (3,-2);\node at(2.5,-2.5){\tiny\( 3 \)};
\draw[draw=black] (-1,1) rectangle (0,2);\node at(-0.5,1.5){\tiny\( 2 \)};
\draw[draw=black] (4,-4) rectangle (5,-3);\node at(4.5,-3.5){\tiny\( 2 \)};
\draw[draw=black] (3,-3) rectangle (4,-2);\node at(3.5,-2.5){\tiny\( 2 \)};
\draw[draw=black] (0,0) rectangle (1,1);\node at(0.5,0.5){\tiny\( 2 \)};
\draw[draw=black] (2,-2) rectangle (3,-1);\node at(2.5,-1.5){\tiny\( 2 \)};
\draw[draw=black] (1,-1) rectangle (2,0);\node at(1.5,-0.5){\tiny\( 2 \)};
\draw[draw=black] (5,-4) rectangle (6,-3);\node at(5.5,-3.5){\tiny\( 1 \)};
\draw[draw=black] (-1,2) rectangle (0,3);\node at(-0.5,2.5){\tiny\( 1 \)};
\draw[draw=black] (3,-2) rectangle (4,-1);\node at(3.5,-1.5){\tiny\( 1 \)};
\draw[draw=black] (1,0) rectangle (2,1);\node at(1.5,0.5){\tiny\( 1 \)};
\node (a) [align=center] {\\[0pt] };
\end{tikzpicture}
\to
\begin{tikzpicture}[scale=.3,line width=0.5pt,baseline=(a.base)]
\draw[draw=black] (-1,-4) rectangle (0,-3);
\draw[draw=black] (0,-4) rectangle (1,-3);
\draw[draw=black] (-1,-3) rectangle (0,-2);
\draw[draw=black] (1,-4) rectangle (2,-3);
\draw[draw=black] (-1,-2) rectangle (0,-1);
\draw[draw=black] (0,-3) rectangle (1,-2);
\draw[draw=black] (-1,-1) rectangle (0,0);
\draw[draw=black] (2,-4) rectangle (3,-3);
\draw[draw=black] (0,-2) rectangle (1,-1);
\draw[draw=black] (1,-3) rectangle (2,-2);
\draw[draw=black] (3,-4) rectangle (4,-3);
\draw[draw=black] (-1,0) rectangle (0,1);
\draw[draw=black] (1,-2) rectangle (2,-1);
\filldraw[green!90,draw=black] (0,-1) rectangle (1,0);
\filldraw[green!30,draw=black] (2,-3) rectangle (3,-2);
\filldraw[green!90,draw=black] (-1,1) rectangle (0,2);
\filldraw[green!30,draw=black] (4,-4) rectangle (5,-3);
\filldraw[green!30,draw=black] (3,-3) rectangle (4,-2);
\filldraw[green!90,draw=black] (0,0) rectangle (1,1);
\filldraw[green!30,draw=black] (2,-2) rectangle (3,-1);
\filldraw[green!90,draw=black] (1,-1) rectangle (2,0);
\filldraw[green!30,draw=black] (5,-4) rectangle (6,-3);
\filldraw[green!90,draw=black] (-1,2) rectangle (0,3);
\filldraw[green!30,draw=black] (3,-2) rectangle (4,-1);
\filldraw[green!90,draw=black] (1,0) rectangle (2,1);
\node (a) [align=center] {\\[0pt] };
\end{tikzpicture}
\to
\begin{tikzpicture}[scale=.3,line width=0.5pt,baseline=(a.base)]
\draw[draw=black] (-1,-4) rectangle (0,-3);
\filldraw[green!30,draw=black] (0,-4) rectangle (1,-3);
\filldraw[green!30,draw=black] (1,-4) rectangle (2,-3);
\draw[draw=black] (-1,-3) rectangle (0,-2);
\filldraw[green!30,draw=black] (0,-3) rectangle (1,-2);
\filldraw[green!30,draw=black] (1,-3) rectangle (2,-2);
\draw[draw=black] (-1,-2) rectangle (0,-1);
\filldraw[green!30,draw=black] (0,-2) rectangle (1,-1);
\filldraw[green!30,draw=black] (1,-2) rectangle (2,-1);
\node (a) [align=center] {\\[0pt] };
\end{tikzpicture}
\, ,
\end{equation*}
where each number is the hook length of the corresponding box in $\dg{\kappa}$, all the green boxes are ${\rm skew}(\kappa)$, and the light green boxes are ${\rm skew}^\perp(\kappa)$.
\end{exam}

\section{A permutation version of type C strong marked tableaux}\mlabel{s:SMT}

Motivated by Conjecture~\ref{conj:conjP}, in this section we develop a
permutation model for type C strong marked tableaux. Starting from the
core-theoretic framework of~\cite[\S5.2.1]{See}, we use edge sequences to
translate strong marked covers, strips, and tableaux into the language of
affine type C permutations, following the general philosophy of~\cite{LLMS}.


\subsection{The core model for strong marked tableaux}

We begin by fixing the core-theoretic terminology and notation needed for
the permutation construction. In particular, we recall strong covers,
their ribbon components, strong marked covers, strong marked strips, and
strong marked tableaux for symmetric $2n$-cores.

\begin{defn}{\rm (\cite[Definition~5.2.1]{See})}
For $\tau$ and $\kappa$ in $\Cco$,  we write $\tau\Rightarrow\kappa$ if
$\dg{\tau}\subset\dg{\kappa}$ and
\[
\ell\bigl(\mathfrak{a}^{-1}(\kappa)\bigr)
=
\ell\bigl(\mathfrak{a}^{-1}(\tau)\bigr)+1.
\]
In this case, we say that $\kappa$ {\em strongly covers} $\tau$, or equivalently that $\tau\Rightarrow\kappa$ is a {\em strong cover}.
\mlabel{defn:SC}
\end{defn}

The idea of the strong cover comes from the following partial order, named the strong (Bruhat) order on $\tilc$. Owing to the bijection between $\tilc^0$ and $\Cco$ in Lemma~\ref{lem:bij1}, the strong cover is essentially the minimal strong order of core version.

\begin{defn}{\rm (\cite[\S3.1]{LSS10c})}
The {\em strong (Bruhat) order} on $\tilc$ is define by, for $w$ and $u$ in $\tilc$, $w\leq u$ if some (equivalently every) reduced expression of $u$ has a subword that is a reduced expression for $w$.
\mlabel{defn:SBO}
\end{defn}

The following lemma collects two fundamental properties of strong covers and the strong order that will be used repeatedly in the sequel.
A set of boxes in a Young diagram is called {\em connected}, if for any two boxes, there is a sequence of boxes in the set from one to the other where consecutive boxes share an edge. A connected set of boxes is called a {\em ribbon} if it does not contain any $2\times2$ subset of boxes. Notice that a partition $\kappa$ is a $2n$-core if and only if $\kappa$ has no removable ribbon consisting of $2n$ boxes.

\begin{lemma}
\begin{enumerate}
\item (\cite[Theorem~5.11]{HJ}) Let $v,w\in\tilc^0$ and $\tau,\kappa\in\Cco$ such that $\tau = \mathfrak{a}(v)$ and $\kappa = \mathfrak{a}(w)$. Then $v\leq w$ if and only if $\dg{\tau} \subseteq \dg{\kappa}$.
\mlabel{it:SCSBO1}

\item (\cite[Proposition~5.2.2~(b)]{See}) If $\tau\Rightarrow\kappa$, then $\kappa/\tau$ is a collection of disjoint ribbons.
\end{enumerate}
\mlabel{lem:SCSBO}
\end{lemma}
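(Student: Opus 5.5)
Both parts of Lemma~\ref{lem:SCSBO} are quoted from the literature. The plan below reduces them to the corresponding type A statements by viewing symmetric $2n$-cores as ordinary $2n$-cores. A symmetric $2n$-core $\kappa\in\Cco$ is in particular a $2n$-core. The action of Lemma~\ref{lem:action} is a folding of the type $A^{(1)}_{2n-1}$ action on $\co{2n}$. In that action, $s_i^A$ adds or removes all corners of residue $i$. Accordingly, $s_i\in\tilc$ acts as $s_i^A$ when $i\in\{0,n\}$, and as the commuting product $s_i^As_{2n-i}^A$ when $i\in[1,n-1]$. This is compatible with Proposition~\ref{prop:symm}, since corners of residues $i$ and $2n-i$ are interchanged by transposition.

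I would make the folding precise as an injective group homomorphism $\iota:\tilc\to\widetilde{A}_{2n-1}$, determined by these formulas on generators. One checks on generators that it intertwines the two actions on cores. Composing with $\mathfrak{a}$ then gives $\mathfrak{a}(w)=\iota(w)\cdot\emptyset$ as a type A core. It also sends $\tilc^0$ into the affine Grassmannian elements of type A. For this last point, note that a right descent $s_j$ with $j\neq0$ of $w$ becomes a right descent $s_j^A$ or $s_{2n-j}^A$ of $\iota(w)$, and conversely.

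For part (1), I would use two known facts. The first is Lascoux's theorem: on type A affine Grassmannian elements, Bruhat order coincides with containment of $2n$-cores. The second is the standard result (Steinberg, Hée) that for the fixed subgroup of a Coxeter diagram automorphism, Bruhat order is the restriction of the ambient Bruhat order. Our $\tilc$ is the fixed subgroup of the automorphism $i\mapsto 2n-i$ of the $A^{(1)}_{2n-1}$ diagram. Combining the two gives
\[
v\le w \iff \iota(v)\le\iota(w) \iff \dg{\tau}\subseteq\dg{\kappa}.
\]
The main obstacle is the second fact. Proving it directly requires checking that a reduced word of $\iota(w)$ folds to a reduced word of $w$. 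One also needs that any subword of $\iota(w)$ giving $\iota(v)$ can be chosen symmetric, which is exactly where the fixed-point theory is needed.

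If I had to avoid that theory, I would instead induct on $\ell(w)$. I would write $w=s_iw'$ with $\ell(w')<\ell(w)$ and $w'\in\tilc^0$, and use the Deodhar lifting property. This says that $v\le w$ implies $v\le w'$ or $s_iv\le w'$. Containment behaves the same way, because $s_i$ only adds all residue-$i$ and residue-$(2n-i)$ addable corners. For the converse, I would pick a corner of $\kappa$ not in $\tau$ and remove its whole residue class via $s_i$.

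For part (2), suppose $\tau\Rightarrow\kappa$. By part (1) and the chain property of Bruhat order, $u=tw$ for some reflection $t$ of $\tilc$. Under $\iota$, the reflection $t$ becomes either a single type A reflection or a product of two commuting type A reflections that are swapped by the diagram automorphism. Each type A reflection acts on the edge sequence of a core by swapping an $N$-step and an $E$-step, periodically modulo $2n$. Each such swap adds a ribbon to the diagram, translated along diagonals of content differing by $2n$. Hence $\kappa/\tau$ is a union of translated ribbons.

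It remains to show these ribbons are disjoint and pairwise non-adjacent, so that they are genuinely the connected components of $\kappa/\tau$. I would deduce this from the length condition $\ell(u)=\ell(w)+1$. If two of the translated ribbons overlapped or shared an edge, the swapped pair of steps would straddle other periodic copies. In that case the length would jump by more than one, by the type A length formula for the change of $\iota(w)$, halved under folding.
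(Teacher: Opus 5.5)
The paper gives no proof of this lemma; both items are simply quoted from the cited sources. Your item (1) is a sound alternative reduction. You realize $\tilc$ as the fixed subgroup of the diagram involution $i\mapsto 2n-i$ of $\widetilde{A}_{2n-1}$, check that the two core actions intertwine, and then combine Lascoux's theorem with the fact that Bruhat order on the fixed subgroup is the restriction of the ambient Bruhat order. That last fact is not an obstacle you must overcome yourself. It is a theorem in the literature (for involutive diagram automorphisms it is due to Hultman), so you should cite it. With it, item (1) is complete.

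Item (2) has a genuine gap in its last paragraph.

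\emph{The real obstruction.} When $t=\trans{a}{b}$ is conjugate to some $s_k$ with $1\le k\le n-1$, it performs two different periodic swaps, on $\{a,b\}$ and on $\{-a,-b\}$ modulo $2n+2$. The danger is that these two mirror families cross each other, not that one swap straddles its own translates. For $n=2$, $\trans{-1}{2}=s_0s_1s_0$ sends $\emptyset$ to $(2,2)$. Neither $\{-1,2\}$ nor $\{-2,1\}$ straddles a translate of itself by $6$, yet $-2<-1<1<2$ interleave, and the added set is a $2\times 2$ block. So the implication you rely on (``overlap or shared edge forces straddling of periodic copies'') is false as stated. Even your earlier step ``each swap adds a ribbon, hence $\kappa/\tau$ is a union of ribbons'' already presupposes the non-crossing that has to be proved.

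\emph{The length input is wrong.} Nothing is halved under folding. The quantity $\ell_A(\iota(x))$ equals $\ell(x)$ plus the number of letters from $\{s_1,\dots,s_{n-1}\}$ in a reduced word of $x$. For example, $\ell(s_0s_1s_0)=3$ while $\ell_A(s^A_0s^A_1s^A_3s^A_0)=4$. Hence a type C cover raises $\ell_A$ by $1$ or by $2$, according to the conjugacy class of $t$.

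\emph{What your route actually needs.} You need a type C cover criterion: if $\ell(u)=\ell(w)+1$ and $a<b$, then all translates of $[a,b]$ and of $[-b,-a]$ on which the swaps act nontrivially are pairwise disjoint. Once that holds, the added rim hooks occupy content ranges separated by at least $2$, so they are exactly the connected components of $\kappa/\tau$.

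Proving that criterion requires an honest length computation. One possible route:
\begin{enumerate}
\item Write $\iota(t)=t_1t_2$ and use $p_{-x}=1-p_x$ for symmetric cores to see that both swaps add boxes, so $\tau\subsetneq t_2\cdot\tau\subsetneq\kappa$.
\item From Lascoux's theorem and the exact increase $2$ of $\ell_A$, conclude that both steps are type A strong covers. The type A ribbon description of Lam--Lapointe--Morse--Shimozono then controls each family separately.
\item Finally, rule out crossing between the two mirror families.
\end{enumerate}
That last point is precisely the step your proposal asserts without proof.
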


\begin{defn}{\rm (\cite[Definition~5.2.3]{See})}
\begin{enumerate}
\item For $\tau$ and $\kappa$ in $\Cco$, a {\em strong marked cover} $$C:=(\SMC{\tau}{c}{\kappa})$$
    is defined to be a strong cover $\tau\Rightarrow\kappa$ together with a marking $c\in\mathbb{Z}$, where $c$ is the content of the most southeast box in one of the ribbon in $\kappa/\tau$. We use the notation {\rm inside}$(C):=\tau$, {\rm outside}$(C):=\kappa$ and $m(C):=c$.
\mlabel{it:SMC1}

\item A {\em strong marked strip} $S$ of size $r\in [0,2n]$ is a sequence of strong marked covers
    \[
    S:= (C_1,\ldots,C_r),
    \]
    such that {\rm outside}$(C_k) = $ {\rm inside}$(C_{k+1})$ for each $k\in[1,r-1]$, and $m(C_1)< \cdots <m(C_r)$. Denote {\rm inside}$(S):= $ {\rm inside}$(C_1)$, {\rm outside}$(S):= $ {\rm outside}$(C_r)$ and {\rm size}$(S):=r$. In particular, if $S$is empty, we say that ${\rm size}(S) = 0$.
\mlabel{it:SMC2}

\item Let $\tau$ and $\kappa$ be in $\Cco$ with $\dg{\tau}\subseteq\dg{\kappa}$. A {\em strong marked tableau} of shape $\kappa/\tau$ and weight $(r_1,\ldots,r_s)$ is a sequence of strong marked strips
    \[
    T:=(S_1,\ldots,S_s),
    \]
    such that {\rm inside}$(S_1) = \tau$, ${\rm outside}(S_i) = {\rm inside}(S_{i+1})$ for $i\in[1,s-1]$, {\rm outside}$(S_s) = \kappa$, and {\rm size}$(S_i) = r_i$ for all $i\in[1,s]$.
    If $S_i$ is empty for some $i\in[1,s]$, we use the convention that
    $$
    {\rm outside}(S_{i-1})=:{\rm inside}(S_i) = {\rm outside}(S_i):=  {\rm inside}(S_{i+1}),
    $$
    where ${\rm outside}(S_{0}):=\tau$ and ${\rm inside}(S_{s+1}):=\kappa$.
    Denote {\rm wt}$(T):=(r_1,\ldots,r_s)$.
    In particular, if $\dg{\tau}=\emptyset$, then we say that $T$ has shape $\kappa$.
    Let $\CSMT{\kappa}$ be the set of all strong marked tableaux of shape $\kappa$.
\mlabel{it:SMC3}
\end{enumerate}
\mlabel{defn:SMC}
\end{defn}

\begin{exam}
Let $n=2$ and
\begin{align*}
\tau:=&\ (4,3,2,1) = \mathfrak{a}(s_1s_0s_2s_1s_0)\in\Cco,\\
\kappa:=&\  (7,5,5,3,3,1,1) = \mathfrak{a}(s_2s_1s_2s_0s_1s_0s_2s_1s_0)\in\Cco.
\end{align*}
Notice that $\dg{\tau}\subseteq\dg{\kappa}$. Then there is a strong marked tableau of shape $\kappa/\tau$
\begin{equation*}
T =
\begin{tikzpicture}[scale=.3,line width=0.5pt,baseline=(a.base)]
\draw[draw=black] (-1,-1) rectangle (0,0);
\draw[draw=black] (0,-1) rectangle (1,0);
\draw[draw=black] (-1,0) rectangle (0,1);
\draw[draw=black] (1,-1) rectangle (2,0);
\draw[draw=black] (-1,1) rectangle (0,2);
\draw[draw=black] (0,0) rectangle (1,1);
\draw[draw=black] (-1,2) rectangle (0,3);
\draw[draw=black] (2,-1) rectangle (3,0);
\draw[draw=black] (0,1) rectangle (1,2);
\draw[draw=black] (1,0) rectangle (2,1);
\node (a) [align=center] {\\[0pt] };
\draw[line width=0.8pt, black]
        (-1.5,-1) --  (-1.5, 6) arc(180:140:2)  (-1.5,-1) arc(180:220:2) ;
\end{tikzpicture}
\overset{-4}{\Rightarrow}
\begin{tikzpicture}[scale=.3,line width=0.5pt,baseline=(a.base)]
\draw[draw=black] (-1,-1) rectangle (0,0);
\draw[draw=black] (0,-1) rectangle (1,0);
\draw[draw=black] (-1,0) rectangle (0,1);
\draw[draw=black] (1,-1) rectangle (2,0);
\draw[draw=black] (-1,1) rectangle (0,2);
\draw[draw=black] (0,0) rectangle (1,1);
\draw[draw=black] (-1,2) rectangle (0,3);
\draw[draw=black] (2,-1) rectangle (3,0);
\draw[draw=black] (0,1) rectangle (1,2);
\draw[draw=black] (1,0) rectangle (2,1);
\filldraw[yellow!90,draw=black] (3,-1) rectangle (4,0);
\filldraw[yellow!90,draw=black] (-1,3) rectangle (0,4);\node at(-0.5,3.5){\tiny\( \star \)};
\filldraw[yellow!90,draw=black] (1,1) rectangle (2,2);
\node (a) [align=center] {\\[0pt] };
\end{tikzpicture}
\overset{5}{\Rightarrow}
\begin{tikzpicture}[scale=.3,line width=0.5pt,baseline=(a.base)]
\draw[draw=black] (-1,-1) rectangle (0,0);
\draw[draw=black] (0,-1) rectangle (1,0);
\draw[draw=black] (-1,0) rectangle (0,1);
\draw[draw=black] (1,-1) rectangle (2,0);
\draw[draw=black] (-1,1) rectangle (0,2);
\draw[draw=black] (0,0) rectangle (1,1);
\draw[draw=black] (-1,2) rectangle (0,3);
\draw[draw=black] (2,-1) rectangle (3,0);
\draw[draw=black] (0,1) rectangle (1,2);
\draw[draw=black] (1,0) rectangle (2,1);
\draw[draw=black] (3,-1) rectangle (4,0);
\draw[draw=black] (-1,3) rectangle (0,4);
\draw[draw=black] (1,1) rectangle (2,2);
\filldraw[yellow!90,draw=black] (4,-1) rectangle (5,0);\node at(4.5,-0.5){\tiny\( \star \)};
\filldraw[yellow!90,draw=black] (-1,4) rectangle (0,5);
\node (a) [align=center] {\\[0pt] };
\end{tikzpicture}
\overset{6}{\Rightarrow}
\begin{tikzpicture}[scale=.3,line width=0.5pt,baseline=(a.base)]
\draw[draw=black] (-1,-1) rectangle (0,0);
\draw[draw=black] (0,-1) rectangle (1,0);
\draw[draw=black] (-1,0) rectangle (0,1);
\draw[draw=black] (1,-1) rectangle (2,0);
\draw[draw=black] (-1,1) rectangle (0,2);
\draw[draw=black] (0,0) rectangle (1,1);
\draw[draw=black] (-1,2) rectangle (0,3);
\draw[draw=black] (2,-1) rectangle (3,0);
\draw[draw=black] (0,1) rectangle (1,2);
\draw[draw=black] (1,0) rectangle (2,1);
\draw[draw=black] (3,-1) rectangle (4,0);
\draw[draw=black] (-1,3) rectangle (0,4);
\draw[draw=black] (1,1) rectangle (2,2);
\draw[draw=black] (4,-1) rectangle (5,0);
\draw[draw=black] (-1,4) rectangle (0,5);
\filldraw[yellow!90,draw=black] (5,-1) rectangle (6,0);\node at(5.5,-0.5){\tiny\( \star \)};
\filldraw[yellow!90,draw=black] (-1,5) rectangle (0,6);
\filldraw[yellow!90,draw=black] (2,0) rectangle (3,1);
\filldraw[yellow!90,draw=black] (0,2) rectangle (1,3);
\node (a) [align=center] {\\[0pt] };
\draw[line width=0.8pt, black]
        (6.5,-1) --  (6.5, 6) arc(0:40:2)  (6.5,-1) arc(0:-40:2) ;
\end{tikzpicture}
\, , \,
\begin{tikzpicture}[scale=.3,line width=0.5pt,baseline=(a.base)]
\draw[draw=black] (-1,-1) rectangle (0,0);
\draw[draw=black] (0,-1) rectangle (1,0);
\draw[draw=black] (-1,0) rectangle (0,1);
\draw[draw=black] (1,-1) rectangle (2,0);
\draw[draw=black] (-1,1) rectangle (0,2);
\draw[draw=black] (0,0) rectangle (1,1);
\draw[draw=black] (-1,2) rectangle (0,3);
\draw[draw=black] (2,-1) rectangle (3,0);
\draw[draw=black] (0,1) rectangle (1,2);
\draw[draw=black] (1,0) rectangle (2,1);
\draw[draw=black] (3,-1) rectangle (4,0);
\draw[draw=black] (-1,3) rectangle (0,4);
\draw[draw=black] (1,1) rectangle (2,2);
\draw[draw=black] (4,-1) rectangle (5,0);
\draw[draw=black] (-1,4) rectangle (0,5);
\draw[draw=black] (5,-1) rectangle (6,0);
\draw[draw=black] (-1,5) rectangle (0,6);
\draw[draw=black] (2,0) rectangle (3,1);
\draw[draw=black] (0,2) rectangle (1,3);
\node (a) [align=center] {\\[0pt] };
\draw[line width=0.8pt, black]
        (-1.5,-1) --  (-1.5, 6) arc(180:140:2)  (-1.5,-1) arc(180:220:2) ;
\end{tikzpicture}
\overset{-1}{\Rightarrow}
\begin{tikzpicture}[scale=.3,line width=0.5pt,baseline=(a.base)]
\draw[draw=black] (-1,-1) rectangle (0,0);
\draw[draw=black] (0,-1) rectangle (1,0);
\draw[draw=black] (-1,0) rectangle (0,1);
\draw[draw=black] (1,-1) rectangle (2,0);
\draw[draw=black] (-1,1) rectangle (0,2);
\draw[draw=black] (0,0) rectangle (1,1);
\draw[draw=black] (-1,2) rectangle (0,3);
\draw[draw=black] (2,-1) rectangle (3,0);
\draw[draw=black] (0,1) rectangle (1,2);
\draw[draw=black] (1,0) rectangle (2,1);
\draw[draw=black] (3,-1) rectangle (4,0);
\draw[draw=black] (-1,3) rectangle (0,4);
\draw[draw=black] (1,1) rectangle (2,2);
\draw[draw=black] (0,2) rectangle (1,3);
\draw[draw=black] (2,0) rectangle (3,1);
\draw[draw=black] (-1,4) rectangle (0,5);
\draw[draw=black] (4,-1) rectangle (5,0);
\filldraw[yellow!90,draw=black] (3,0) rectangle (4,1);
\filldraw[yellow!90,draw=black] (0,3) rectangle (1,4);
\filldraw[yellow!90,draw=black] (2,1) rectangle (3,2);
\filldraw[yellow!90,draw=black] (1,2) rectangle (2,3);\node at(1.5,2.5){\tiny\( \star \)};
\draw[draw=black] (5,-1) rectangle (6,0);
\draw[draw=black] (-1,5) rectangle (0,6);
\filldraw[yellow!90,draw=black] (3,1) rectangle (4,2);
\filldraw[yellow!90,draw=black] (1,3) rectangle (2,4);
\node (a) [align=center] {\\[0pt] };
\draw[line width=0.8pt, black]
        (6.5,-1) --  (6.5, 6) arc(0:40:2)  (6.5,-1) arc(0:-40:2) ;
\end{tikzpicture}
\,
\end{equation*}
with {\rm wt}$(T) = (3,1)$. We encode this as
\[
\begin{tikzpicture}[scale=.3,line width=0.5pt,baseline=(a.base)]
\draw[draw=black] (-1,-1) rectangle (0,0);
\draw[draw=black] (0,-1) rectangle (1,0);
\draw[draw=black] (-1,0) rectangle (0,1);
\draw[draw=black] (1,-1) rectangle (2,0);
\draw[draw=black] (-1,1) rectangle (0,2);
\draw[draw=black] (0,0) rectangle (1,1);
\draw[draw=black] (-1,2) rectangle (0,3);
\draw[draw=black] (2,-1) rectangle (3,0);
\draw[draw=black] (0,1) rectangle (1,2);
\draw[draw=black] (1,0) rectangle (2,1);
\draw[draw=black] (3,-1) rectangle (4,0);\node at(3.5,-0.5){\tiny\( 1 \)};
\draw[draw=black] (-1,3) rectangle (0,4);\node at(-0.5,3.5){\tiny\( 1_\star \)};
\draw[draw=black] (1,1) rectangle (2,2);\node at(1.5,1.5){\tiny\( 1 \)};
\draw[draw=black] (0,2) rectangle (1,3);\node at(0.5,2.5){\tiny\( 1 \)};
\draw[draw=black] (2,0) rectangle (3,1);\node at(2.5,0.5){\tiny\( 1 \)};
\draw[draw=black] (-1,4) rectangle (0,5);\node at(-0.5,4.5){\tiny\( 1 \)};
\draw[draw=black] (4,-1) rectangle (5,0);\node at(4.5,-0.5){\tiny\( 1_\star \)};
\draw[draw=black] (3,0) rectangle (4,1);\node at(3.5,0.5){\tiny\( 2 \)};
\draw[draw=black] (0,3) rectangle (1,4);\node at(0.5,3.5){\tiny\( 2 \)};
\draw[draw=black] (2,1) rectangle (3,2);\node at(2.5,1.5){\tiny\( 2 \)};
\draw[draw=black] (1,2) rectangle (2,3);\node at(1.5,2.5){\tiny\( 2_\star \)};
\draw[draw=black] (5,-1) rectangle (6,0);\node at(5.5,-0.5){\tiny\( 1_\star \)};
\draw[draw=black] (-1,5) rectangle (0,6);\node at(-0.5,5.5){\tiny\( 1 \)};
\draw[draw=black] (3,1) rectangle (4,2);\node at(3.5,1.5){\tiny\( 2 \)};
\draw[draw=black] (1,3) rectangle (2,4);\node at(1.5,3.5){\tiny\( 2 \)};
\node (a) [align=center] {\\[0pt] };
\end{tikzpicture}.
\]
\mlabel{ex:SMC}
\end{exam}

\subsection{Edge sequence}
To pass from the core model to the permutation model, we use edge sequences to encode symmetric $2n$-cores in a form that is compatible with the action of the affine Weyl group.

We introduce below a slight variation, denoted by $p$, of the bijection constructed in~\mcite{LLMS}.
Consider a set of bi-infinite binary sequences:
\[
{\rm Eg}:=\big\{ (\ldots p_i \ldots)\mid i\in\mathbb{Z}\setminus\{ q(n+1),q\in\mathbb{Z}\}, p_i = 0\,\text{ or }\,1 \} \big\}.
\]
There is a bijection $$p:{\rm P}\to {\rm Eg}, \quad \lambda \mapsto p(\lambda)$$
Here $p(\lambda)$ is defined by traceing the border of $\dg{\lambda}$, going from northwest to southeast, such that every letter $1$ (resp. $0$) represents a south (resp. east) step, and some box with content $c$ is touched by the east step $p_x$ and south step $p_{c+v}$, where
\[
v:=\lfloor c/n \rfloor+1, \qquad
x:= \max{\big\{ x\in\mathbb{Z}\setminus\{ q(n+1),q\in\mathbb{Z}\} \mid x<c+v \big\}}.
\]
The $p(\lambda)$ is named an {\em edge sequence}.
We give an example to further elucidate the bijection $p$.

\begin{exam}
Let $n=2$.
Consider the diagram of the partition $\kappa:=(5,4,3,2,1)$ with its edge sequence $p(\kappa)$ labeled, where the $\bullet$ separates $p_{-1}$ and $p_1$:
\begin{equation*}
\begin{tikzpicture}[scale=.5,line width=0.5pt,baseline=(a.base)]
\draw[draw=black] (-1,-1) rectangle (0,0);
\draw[draw=black] (0,-1) rectangle (1,0);
\draw[draw=black] (-1,0) rectangle (0,1);
\draw[draw=black] (1,-1) rectangle (2,0);
\draw[draw=black] (-1,1) rectangle (0,2);
\draw[draw=black] (0,0) rectangle (1,1);
\draw[draw=black] (-1,2) rectangle (0,3);
\draw[draw=black] (2,-1) rectangle (3,0);
\draw[draw=black] (0,1) rectangle (1,2);
\draw[draw=black] (1,0) rectangle (2,1);
\draw[draw=black] (3,-1) rectangle (4,0);
\draw[draw=black] (-1,3) rectangle (0,4);\node at(-0.5,4.2){\tiny\( 0 \)};
\node at(0.1,3.5){\tiny\( 1 \)};
\draw[draw=black] (1,1) rectangle (2,2);
\draw[draw=black] (0,2) rectangle (1,3);\node at(0.5,3.2){\tiny\( 0 \)};
\node at(1.1,2.5){\tiny\( 1 \)};
\node at(1.5,2.2){\tiny\( 0 \)};\node at(2.1,1.5){\tiny\( 1 \)};
\node at(2.5,1.2){\tiny\( 0 \)};\node at(3.1,0.5){\tiny\( 1 \)};
\node at(3.5,0.2){\tiny\( 0 \)};\node at(4.1,-0.5){\tiny\( 1 \)};
\node at(4.5,-0.8){\tiny\( 0 \)};
\node at(5.5,-0.8){\tiny\( 0 \)};
\draw[draw=black] (2,0) rectangle (3,1);
\node (a) [align=center] {\\[0pt] };
\node at(2,2){\tiny\( \bullet \)};
\draw[black,line width=0.5pt] (-1,4)--(-1,5);\node at(-0.9,4.5){\tiny\( 1 \)};
\draw[black,line width=0.5pt] (4,-1)--(5,-1);
\draw[black,line width=0.5pt] (5,-1)--(6,-1);
\end{tikzpicture}
\end{equation*}
\[
\begin{tabular}{c | c c c c c c c c c c c c c c c c c c c}
i & -8 & -7 & $\widehat{-6}$ & -5 & -4 & $\widehat{-3}$ & -2 & -1 & $\widehat{0}$ & 1 & 2 & $\widehat{3}$ & 4 & 5 & $\widehat{6}$ & 7 & 8 & $\widehat{9}$ & 10\\
\hline
$p_i$ & 1 & 0 & & 1 & 0 & & 1 & 0 & & 1 & 0 & & 1 & 0 & & 1 & 0 & & 0
\end{tabular}
\]
\mlabel{ex:edgeseq}
\end{exam}

There is a natural group action
\begin{align*}
*: \tilc\times {\rm P}  \longrightarrow \, {\rm P}, \quad
(s_i, \lambda)  \longmapsto \, p^{-1}\Big(s_i\big( p(\lambda)\big)\Big) \, .
\end{align*}
where $s_i(p_j):= p_{s_i(j)}$ for each $i\in[0,n]$ and $j\in\mathbb{Z}\setminus\{ q(n+1),q\in\mathbb{Z}\}$. We end this subsection by  showing that, when restricting {\rm P} to $\Cco$, the action $*$ is exactly the action $\cdot$ defined in Lemma~\ref{lem:action}.

\begin{prop}
For $i\in[0,n]$ and $\kappa\in\Cco$, we have
\begin{equation}
s_i\cdot \kappa = s_i * \kappa.
\mlabel{eq:twoaction}
\end{equation}
\mlabel{prop:twoaction}
\end{prop}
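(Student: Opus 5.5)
The plan is to reduce both actions to an explicit local rule on the edge sequence $p(\kappa)$, and then to use the core property to see that the two rules agree. I would proceed in three steps.

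\emph{Step 1: a box becomes a swap of two adjacent labels.} Write $L(c):=c+\lfloor c/n\rfloor+1$. As $c$ runs over $[qn,qn+n-1]$, $L(c)$ runs over the block $[q(n+1)+1,\,q(n+1)+n]$, so $L$ is an increasing bijection from $\mathbb{Z}$ onto $\mathbb{Z}\setminus(n+1)\mathbb{Z}$. For a box of content $c$, its east step is $p_x$ with $x$ the predecessor of $L(c)$ in $\mathbb{Z}\setminus(n+1)\mathbb{Z}$, and its south step is $p_{L(c)}$. For any diagonal of content $c$, consider the pair $(p_x,p_{L(c)})$:
\begin{itemize}
\item if $(p_x,p_{L(c)})=(1,0)$, there is an addable corner of content $c$;
\item if $(p_x,p_{L(c)})=(0,1)$, there is a removable corner of content $c$;
\item otherwise, the diagonal has no corner.
\end{itemize}
Interchanging the two entries adds or removes exactly that corner.

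\emph{Step 2: identify which pairs each $s_i$ swaps.} Write $c=2nk+r$ with $r\in[0,2n-1]$. Then $L(c)=(2n+2)k+r+\lfloor r/n\rfloor+1$, so the residue of $c$ modulo $2n$ determines the class of the pair $(x,L(c))$ modulo $2n+2$. Comparing with Lemma~\ref{lem:Cperm}:
\begin{itemize}
\item For $0<i<n$, $s_i$ interchanges the labels $i\leftrightarrow i+1$ and $-i-1\leftrightarrow -i$ modulo $2n+2$. These are exactly the pairs $(x,L(c))$ with $c\equiv i$ or $c\equiv -i\equiv 2n-i \pmod{2n}$. For example, in the block $q=-1$, $L(c)=-i$ gives $c=-i$.
\item $s_0$ interchanges $-1\leftrightarrow 1$, skipping $0$. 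These are the pairs with $L(c)=1$, i.e.\ $c\equiv 0$.
\item $s_n$ interchanges $n\leftrightarrow n+2$, skipping $n+1$. These are the pairs with $L(c)=n+2$, i.e.\ $c\equiv n$.
\end{itemize}
So $s_i*\kappa$ is obtained from $\kappa$ by simultaneously performing the Step 1 swap on every diagonal of residue $i$ or $2n-i$. That is, we add all addable corners and remove all removable corners of those residues at once. I would also check that these swaps occur on disjoint label pairs, so that the simultaneous operation is well defined; this follows from $s_i$ being an involution on labels.

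\emph{Step 3: compare with Lemma~\ref{lem:action}.} It suffices to show that a symmetric $2n$-core cannot have both an addable corner of residue in $\{i,2n-i\}$ and a removable corner of residue in $\{i,2n-i\}$. Given this, Step 2 gives case~\eqref{it:action1}, case~\eqref{it:action2}, or the trivial case~\eqref{it:action3}, exactly as in Lemma~\ref{lem:action}.

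For a single residue, I would use the abacus reading of $p(\kappa)$. The $2n$ residue classes of $\mathbb{Z}\setminus(n+1)\mathbb{Z}$ modulo $2n+2$ form $2n$ runners. Being a $2n$-core means each runner is flush, since there is no removable $2n$-ribbon. In that setting, an addable and a removable corner of the same residue $r$ would require the runner pair $(x,L(c))$ to be both ahead of and behind each other, which is a contradiction. This is the standard fact that a core cannot simultaneously have addable and removable corners of the same residue.

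To pass from residue $i$ to the pair $\{i,2n-i\}$, I would use Proposition~\ref{prop:symm} together with the symmetry $w(x)=-w(-x)$ of Lemma~\ref{lem:CpermDes}, transported to the diagram via transposition: an addable corner of residue $2n-i$ reflects to an addable corner of residue $i$, and likewise for removable corners.

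The main obstacle is Step 3, because it needs a careful translation of the core condition into the edge-sequence language with the skipped labels $(n+1)\mathbb{Z}$. The hard part is the index bookkeeping at the skipped labels for $s_0$ and $s_n$, where a ``step'' of length $2$ in the label set must correspond to adjacent steps of the border.
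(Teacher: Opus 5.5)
Your proposal is correct and is essentially the paper's argument: the paper likewise identifies adding or removing a box of content $c$ with interchanging $p_x$ and $p_{c+\lfloor c/n\rfloor+1}$. It then checks against Lemma~\ref{lem:Cperm} that $s_i$ interchanges exactly these label pairs for contents of residue $i$ or $2n-i$.

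The one difference is that your Step~3 makes explicit a fact the paper's case-by-case proof uses only tacitly, hidden in the phrase ``by the arbitrariness of box $c$'' and in the well-definedness of Lemma~\ref{lem:action}. That fact is that a symmetric $2n$-core cannot have both addable and removable corners with residue in $\{i,2n-i\}$, which is what guarantees the edge-sequence swaps in Case~1 never also remove boxes (and dually in Case~2).
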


\begin{proof}
Let $\eta:=s_i\cdot \kappa$. It suffice to prove $s_i * \kappa = \eta$, which is divided into three cases due to Lemma~\ref{lem:action}.

\noindent{\bf Case~1.} $\kappa$ has an addable corner $c:= (a,b)$ of residue $i \equiv b-a$ mod $2n$. Then $\eta$ is $\kappa$ with all addable corners of residue $i$ and $2n-i$ {\rm mod} $2n$ added.

\noindent{\bf Subcase~1.1.} $i=0$. That is $b-a = 2kn$ for some $k\in\mathbb{Z}$. Then
\begin{align*}
b-a+\lfloor (b-a)/n \rfloor +1 = 1+ 2k(n+1).
\end{align*}
It follows that the east edge of $c$ in $\dg{\eta}$ is labeled by the south step $p_{1+2k(n+1)} = 1$ by the definition of edge sequence. Since
\[
b-a+\lfloor (b-a)/n \rfloor = 2k(n+1),
\]
$p_{2k(n+1)}$ is undefined. Hence, the north edge of $c$ in $\dg{\eta}$ is labeled by the east step $p_{-1+2k(n+1)} = 0$. Meanwhile, in the edge sequence of $\kappa$, we have
\[
p_{1+2k(n+1)} = 0, \quad p_{-1+2k(n+1)} = 1,
\]
which means that adding the box $c$ to $\dg{\kappa}$ corresponds precisely to
interchanging $p_{-1+2k(n+1)}$ and $p_{1+2k(n+1)}$.
Note that
\begin{equation}
s_i(j+2x(n+1)) = s_i(j)+2x(n+1)
\mlabel{eq:period}
\end{equation}
for any $i\in[0,n]$ and $x\in\mathbb{Z}$.
Thus,
\begin{align*}
s_i \big(p_{-1+2x(n+1)}\big)=&\ s_0 \big(p_{-1+2x(n+1)}\big)\hspace{1cm}(\text{by $i=0$})\\
=&\ p_{s_0(-1+2x(n+1))}\\
=&\ p_{s_0(-1)+2x(n+1)}\hspace{1cm}(\text{by~(\ref{eq:period})})\\
=&\ p_{1+2x(n+1)}\hspace{1cm}(\text{by Lemma~\ref{lem:Cperm}~(\ref{it:Cperm2})}),
\end{align*}
for all $x\in\mathbb{Z}$.
Similarly,
$$s_i\big(p_{1+2x(n+1)}\big) = p_{-1+2x(n+1)}$$
and
$$s_i\big(p_{j}\big) = p_j\,\text{ if  }\, j\notin\{ -1+2x(n+1),1+2x(n+1) \mid x\in\mathbb{Z}\}.$$
Hence, the action of $s_i$ on edge sequence only exchanges $p_{-1+2x(n+1)}$ and $p_{1+2x(n+1)}$ for all $x\in\mathbb{Z}$ (especially $x=k$), and further leads to adding $c$ to $\dg{\kappa}$.
Due to the arbitrariness of box $c$, we have $s_i\big( p(\kappa)\big) = p(\eta)$, which follows that
\[
s_i*\kappa = p^{-1}\Big(s_i\big( p(\kappa)\big)\Big) = \eta.
\]

\noindent{\bf Subcase~1.2.} $i=n$. The proof of $i=n$ is similar to the proof of Subcase~1.1, since $s_0$ and $s_n$ are symmetric.

\noindent{\bf Subcase~1.3.} $i\in[1,n-1]$. Suppose $b-a = i +2kn$ for some $k\in\mathbb{Z}$.
Then
\begin{align*}
b-a + \lfloor (b-a)/n \rfloor +1 =&\ i + 2kn + \lfloor (i+2kn)/n \rfloor +1\\
=&\ i + 2kn + 2k +1 \hspace{1cm} (\text{by $i\in[1,n-1]$})\\
=&\ i+1 +2k(n+1),\\
b-a + \lfloor (b-a)/n \rfloor =&\ i + 2kn + \lfloor (i+2kn)/n \rfloor \\
=&\ i +2k(n+1) \nequiv 0\,\text{ mod }\,(n+1).
\end{align*}
Thus, the east (resp. north) edge of $c$ in $\dg{\eta}$ is labeled by the south (resp. east) step
$$
p_{i+1+2k(n+1)} = 1 \,\text{(resp. } p_{i+2k(n+1)} = 0).
$$
On the other hand, $$p_{i+1+2k(n+1)} = 0,  \qquad p_{i+2k(n+1)} = 1$$ in $\dg{\kappa}$.
Hence, adding the box $c$ to $\dg{\kappa}$ is equivalent to
interchanging $p_{i+2k(n+1)}$ and $p_{i+1+2k(n+1)}$.
By the symmetry of $\kappa$ in $\Cco$, $c':=(b,a)$ is an addable corner of $\dg{\kappa}$ with residue $2n-i$.

Following the same analysis as for $c$, we find that adding the box $c'$
to $\dg{\kappa}$ corresponds precisely to interchanging
$p_{-(i+1)-2k(n+1)}$ and $p_{-i-2k(n+1)}$.

By Lemma~\ref{lem:Cperm}~(\ref{it:Cperm1}), we have
\begin{equation*}
s_i(p_x) =
\left\{
\begin{array}{ll}
p_x, & \quad \text{if $x \neq \pm i, \pm (i+1)\,{\rm mod }\, (2n+2)$},\\
p_{x+1}, & \quad \text{if $x = i, -(i+1)\,{\rm mod }\, (2n+2)$},\\
p_{x-1}, & \quad \text{if $x = i+1, -i\,{\rm mod }\, (2n+2)$}.
\end{array}
\right.
\end{equation*}
It induces that the action of $s_i$ on edge sequence leads to adding $c$ and $c'$ to $\dg{\kappa}$. By the arbitrariness of box $c$, $s_i*\kappa=\eta$ as required.

\noindent{\bf Case~2.} The $\kappa$ has a removable corner $c:= (a,b)$ of residue $i \equiv b-a$ mod $2n$. The proof is similar to Case~1, since addable and removable are inverse processes.

\noindent{\bf Case~3.} The $\kappa$ has no addable or removable corner $c:= (a,b)$ of residue $i \equiv b-a$ mod $2n$. The statement is true by $s_i * \kappa = \kappa = s_i\cdot\kappa$.
\end{proof}

The following example illustrates Proposition~\ref{prop:twoaction} by showing explicitly how the action of a simple reflection on the edge sequence agrees with its action on the corresponding symmetric core.

\begin{exam}
Let $n=2$ and $\kappa:=(5,4,3,2,1)\in\Cco$. It follows from Example~\ref{ex:bij1} that $$s_1\cdot\kappa = (6,5,4,3,2,1).$$  Together with Example~\ref{ex:edgeseq}, we have
\[
\begin{tabular}{c | c c c c c c c c c c c c c c c c c c c}
i & -8 & -7 & $\widehat{-6}$ & -5 & -4 & $\widehat{-3}$ & -2 & -1 & $\widehat{0}$ & 1 & 2 & $\widehat{3}$ & 4 & 5 & $\widehat{6}$ & 7 & 8 & $\widehat{9}$ & 10\\
\hline
$p_i$ & 1 & 0 & & 1 & 0 & & 1 & 0 & & 1 & 0 & & 1 & 0 & & 1 & 0 & & 0 \\
$s_1(p_i)$ & 0 & 1 & & 0 & 1 & & 0 & 1 & & 0 & 1 & & 0 & 1 & & 0 & 1 & & 0
\end{tabular}
\]
It follows that $$s_1 * \kappa = (6,5,4,3,2,1) = s_1\cdot\kappa$$ via
\[
\begin{tikzpicture}[scale=.5,line width=0.5pt,baseline=(a.base)]
\draw[draw=black] (-1,-1) rectangle (0,0);
\draw[draw=black] (0,-1) rectangle (1,0);
\draw[draw=black] (-1,0) rectangle (0,1);
\draw[draw=black] (1,-1) rectangle (2,0);
\draw[draw=black] (-1,1) rectangle (0,2);
\draw[draw=black] (0,0) rectangle (1,1);
\draw[draw=black] (-1,2) rectangle (0,3);
\draw[draw=black] (2,-1) rectangle (3,0);
\draw[draw=black] (0,1) rectangle (1,2);
\draw[draw=black] (1,0) rectangle (2,1);
\draw[draw=black] (3,-1) rectangle (4,0);
\draw[draw=black] (-1,3) rectangle (0,4);
\draw[draw=black] (1,1) rectangle (2,2);
\draw[draw=black] (0,2) rectangle (1,3);
\draw[draw=black] (2,0) rectangle (3,1);
\filldraw[blue!30,draw=black] (-1,4) rectangle (0,5);
\filldraw[blue!30,draw=black] (0,3) rectangle (1,4);
\filldraw[blue!30,draw=black] (1,2) rectangle (2,3);
\filldraw[blue!30,draw=black] (2,1) rectangle (3,2);
\filldraw[blue!30,draw=black] (3,0) rectangle (4,1);
\filldraw[blue!30,draw=black] (4,-1) rectangle (5,0);
\node at(-0.5,5.2){\tiny\( 0 \)}; \node at(0.1,4.5){\tiny\( 1 \)};
\node at(0.5,4.2){\tiny\( 0 \)}; \node at(1.1,3.5){\tiny\( 1 \)};
\node at(1.5,3.2){\tiny\( 0 \)}; \node at(2.1,2.5){\tiny\( 1 \)};
\node at(2.5,2.2){\tiny\( 0 \)}; \node at(3.1,1.5){\tiny\( 1 \)};
\node at(3.5,1.2){\tiny\( 0 \)}; \node at(4.1,0.5){\tiny\( 1 \)};
\node at(4.5,0.2){\tiny\( 0 \)}; \node at(5.1,-0.5){\tiny\( 1 \)};
\node at(5.5,-0.8){\tiny\( 0 \)};
\node (a) [align=center] {\\[0pt] };
\node at(2,2){\tiny\( \bullet \)};
\draw[black,line width=0.5pt] (5,-1)--(6,-1);
\draw[black,line width=0.5pt] (-1,5)--(-1,6);
\end{tikzpicture}
\]
\end{exam}

\subsection{Type C transposition}
In this subsection, following the permutation-theoretic approach of~\cite{LLMSSZ,LLMS}, we introduce type C transpositions in $\widetilde{C}_n$ and use them to develop a permutation model for type C strong marked tableaux.

\begin{defn}
For $i,j\in\mathbb{Z}\setminus\{ q(n+1),q\in\mathbb{Z}\}$ with $i\nequiv j$ mod $(2n+2)$, define the {\em type C transposition} $\trans{i}{j}$ to be the unique element of $\tilc$ such that
\begin{equation*}
\trans{i}{j}(x) =
\left\{
\begin{array}{ll}
j+q(2n+2), & \quad \text{if $x = i+q(2n+2)$ for $q\in\mathbb{Z}$},\\
i+q(2n+2), & \quad \text{if $x = j+q(2n+2)$ for $q\in\mathbb{Z}$},\\
-j+q(2n+2), & \quad \text{if $x = -i+q(2n+2)$ for $q\in\mathbb{Z}$},\\
-i+q(2n+2), & \quad \text{if $x = -j+q(2n+2)$ for $q\in\mathbb{Z}$},\\
x, & \quad \text{otherwise}.
\end{array}
\right.
\end{equation*}
\mlabel{defn:trans}
\end{defn}

The following remark collects several elementary identities that will be useful in the permutation calculations below.

\begin{remark}The type C transpositions introduced above satisfy the following basic relations:
\begin{enumerate}
\item For $i,j\in\mathbb{Z}\setminus\{ q(n+1),q\in\mathbb{Z}\}$ with $i\nequiv j$ mod $(2n+2)$,
    \begin{equation}
    \trans{i}{j} = \trans{j}{i} = \trans{-i}{-j} =\trans{-j}{-i}.
    \mlabel{eq:tfact1}
    \end{equation}

\item For each $q\in\mathbb{Z}$,
\begin{equation}
\trans{i}{j} = \trans{i+q(2n+2)}{j+q(2n+2)}.
\mlabel{eq:tfact2}
\end{equation}

\item By Lemma~\ref{lem:Cperm}, for $i\in[0,n]$,
\begin{equation}
s_i =
\left\{
\begin{array}{ll}
\trans{i}{i+1}, & \quad \text{if $i\in[1,n-1]$},\\
\trans{-1}{1}, & \quad \text{if $i=0$},\\
\trans{n}{n+2}, & \quad \text{if $i=n$}.
\end{array}
\right.
\mlabel{eq:tfact3}
\end{equation}
\end{enumerate}
\mlabel{re:tfact}
\end{remark}

We define some expressions in $\tilc$ inspired by~\cite[\S6.1]{LSS10c}:
\[
\begin{array}{ll}
I\uparrow^k_{k'}:= s_{k'}s_{k'+1}\cdots s_{k-1}s_{k}, & \quad \text{for $0\leq k'\leq k\leq n$},\\
I\downarrow^k_{k'}:= s_{k}s_{k-1}\cdots s_{k'+1}s_{k'}, & \quad \text{for $0\leq k'\leq k\leq n$},\\
V^{k,k'}_{a}:= s_k s_{k-1}\cdots s_{a+1} s_{a} s_{a+1} \cdots s_{k'-1} s_{k'}, & \quad \text{for $0\leq a \leq k, k' \leq n$}, \\
\Lambda_{k,k'}^{a}:= s_k s_{k+1}\cdots s_{a-1} s_{a} s_{a-1} \cdots s_{k'+1} s_{k'}, & \quad \text{for $0\leq  k, k' \leq a \leq n$}, \\
\reW{a}{a'}{k}{k'}{k''}:=V^{k,k'-1}_{a}s_{k'}V^{k'-1,k''}_{a'},  & \quad\text{for $0\leq  a,a' \leq k,k',k'' \leq n$},\\
\reM{a}{a'}{k}{k'}{k''}:=\Lambda_{k,k'+1}^{a}s_{k'}\Lambda_{k'+1,k''}^{a'},  & \quad\text{for $0 \leq k,k',k'' \leq a,a' \leq n$}.
\end{array}
\]
The name $I\uparrow$ (similar to $I\downarrow,V,\Lambda,W,M$) is defined by that, the generator indices in the word go up like the letter $I$.

\begin{remark}
With the notation introduced above, the type C transpositions can be expressed in terms of the simple reflections $s_0,s_1,\ldots,s_n$ as follows, according to the relative positions of their indices.

\noindent{\bf Case~A.} $i,j \in[-n,n]\setminus\{0\}$. By~(\ref{eq:tfact1}), we only need to consider the case of $|i|\leq j$.

\noindent{\bf Subcase~A.1.} $1\leq i<j\leq n$. Then
\begin{equation}
\trans{i}{j} = V^{j-1,j-1}_{i}.
\mlabel{eq:t11}
\end{equation}

\noindent{\bf Subcase~A.2.} $-n\leq i <0 <j \leq n$ and $|i|=j$. Then
\begin{equation}
\trans{i}{j} = V^{j-1,j-1}_{0}.
\mlabel{eq:t12}
\end{equation}

\noindent{\bf Subcase~A.3.} $-n\leq i <0 <j \leq n$ and $|i|<j$. Then
\begin{equation}
\trans{i}{j} = \reW{0}{0}{j-1}{-i}{j-1}.
\mlabel{eq:t13}
\end{equation}

\noindent{\bf Case~B.} $i,j \in[1,2n+1]\setminus\{n+1\}$. By~(\ref{eq:CpermDesSym}) and~(\ref{eq:tfact1}), we only need to consider the case of $|i-(n+1)|\leq j-(n+1)$.

\noindent{\bf Subcase~B.1.} $n+2\leq i<j\leq 2n+1$. Then
\begin{equation}
\trans{i}{j} = \Lambda_{2n+2-j,2n+2-j}^{2n+2-i-1}.
\mlabel{eq:t21}
\end{equation}

\noindent{\bf Subcase~B.2.} $1\leq i <n+1 <j \leq 2n+1$ and $|i-(n+1)|=j-(n+1)$. Then
\begin{equation}
\trans{i}{j} = \Lambda^{n}_{2n+2-j,2n+2-j}.
\mlabel{eq:t22}
\end{equation}

\noindent{\bf Subcase~B.3.} $1\leq i <n+1 <j \leq 2n+1$ and $|i-(n+1)|<j-(n+1)$. Then
\begin{equation}
\trans{i}{j} = \reM{n}{n}{2n+2-j}{i-1}{2n+2-j}.
\mlabel{eq:t23}
\end{equation}

\noindent{\bf Case~C.} Otherwise, that is
$$j\in[1,n], \qquad i\in\mathbb{Z}_{<0}\setminus(\{ q(n+1),q\in\mathbb{Z}_{<0}\}\cup [-n,1] ),$$
or
$$j\in[n+2,2n+1],\qquad i\in\mathbb{Z}_{<0}\setminus\{ q(n+1),q\in\mathbb{Z}_{<0}\}. $$
By~\eqref{eq:tfact1} and~\eqref{eq:tfact2}, we can always assume that
$$i<j, \qquad j\in[1,2n+1]\setminus (n+1).$$
Then $\trans{i}{j}$ can be expressed by~(\ref{eq:t11})-(\ref{eq:t23}) via:
fix $i,j$, there exist an $$a\in[1,2n+1]\setminus\{n+1,j\}$$ such that $$a= i+2q(n+1)\,\text{ for some }\, q\in\mathbb{Z}_{>0}.$$
Then
\begin{equation}
\trans{i}{j} =
\Big(\prod_{x=0}^{q-1}\trans{i+2x(n+1)}{\bar{x}}
\trans{\bar{x}}{i+2(x+1)(n+1)}\Big)\trans{a}{j}
\Big(\prod_{x=q-1}^{0}
\trans{\bar{x}}{i+2(x+1)(n+1)}\trans{i+2x(n+1)}{\bar{x}}\Big),
\mlabel{eq:t3}
\end{equation}
where
$$
\bar{x}:=
\left\{
\begin{array}{ll}
-i+(-4q+2+2x)(n+1), & \quad \text{if $a\in[1,n]$},\\
-i+(-4q+4+2x)(n+1), & \quad \text{if $a\in[n+2,2n+1]$}.\\
\end{array}
\right.
$$
Note that each type C transposition in the right hand side of~\eqref{eq:t3} can be expressed as one of~(\ref{eq:t11})-(\ref{eq:t23}) due to~\eqref{eq:tfact2}.
\mlabel{rk:3cases}
\end{remark}

The following two examples illustrate the preceding descriptions: the first gives an explicit expression of a type C transposition in terms of simple reflections, while the second shows how a transposition in Case~C can be reduced to those covered by Cases~A and~B.

\begin{exam}
Let $n=4$. Consider the action of
$$\reW{0}{0}{3}{3}{3} = s_3s_2s_1s_0s_1s_2s_3s_2s_1s_0s_1s_2s_3$$
on $\mathbb{Z}$:
\[
\begin{tabular}{c | c c c c c c c c c c c}
$\mathbb{Z}$ & -5 & -4 & -3 & -2 & -1 & 0 & 1 & 2 & 3 & 4 & 5 \\
\hline
& -5 & \textcolor{red}{3} & \textcolor{red}{4} & -2 & -1 & 0 & 1 & 2 & \textcolor{red}{-4} & \textcolor{red}{-3} & 5
\end{tabular}
\]
Then  $\trans{-3}{4}=\reW{0}{0}{3}{3}{3}$.
\end{exam}

\begin{exam}
Let $j\in[1,n]$ and $$i\in[-2(n+1)+1, -(n+1)-1]\,\text{ such that }\, j\nequiv i \mod (2n+2).$$ Suppose $$a:=i+2(n+1).$$
Then
$$a\in[1,n], \qquad \trans{i}{j} = \trans{i}{-i-2(n+1)}\trans{-i-2(n+1)}{i+2(n+1)}\trans{a}{j}
\trans{-i-2(n+1)}{i+2(n+1)}\trans{i}{-i-2(n+1)},$$
where
\[
\begin{tabular}{c | c }
$\trans{i}{-i-2(n+1)} = \trans{i+2(n+1)}{-i}$ &  $i+2(n+1)=a\in[1,n]$ and $-i\in[n+2,2n+1]$  \\
\hline
$\trans{-i-2(n+1)}{i+2(n+1)}$ & $-i-2(n+1)\in[-n,-1]$ and $i+2(n+1)=a\in[1,n]$ \\
\hline
$\trans{a}{j}$ & $a\in[1,n]$, $j\in[1,n]$ and $a\neq j$
\end{tabular}
\]
\end{exam}

\begin{remark}
Note that Cases~A and~B in Remark~\mref{rk:3cases} are symmetric, as the same way as the symmetry of $s_0$ and $s_n$. Besides, Case~C can be represented by Cases~A and~B. Hence, for the rest of this paper on $\trans{i}{j}$, we only consider Case~A.
\mlabel{re:3cases1}
\end{remark}

The preceding descriptions express type C transpositions in terms of the
simple reflections. The following theorem shows that these transpositions
are compatible with the action of the affine Weyl group through conjugation,
a property that will be fundamental for the permutation model developed below.

\begin{theorem}
For $w\in \tilc$ and $i,j\in\mathbb{Z}\setminus\{ q(n+1),q\in\mathbb{Z}\}$ with $i\nequiv j \mod (2n+2)$, we have
\begin{equation}
w\trans{i}{j} = \trans{w(i)}{w(j)}w.
\mlabel{eq:reflection}
\end{equation}
\mlabel{thm:reflection}
\end{theorem}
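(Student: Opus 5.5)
The plan is to prove \eqref{eq:reflection} in the equivalent form $w\trans{i}{j}w^{-1}=\trans{w(i)}{w(j)}$ by comparing both sides as permutations of $\mathbb{Z}$. By Definition~\ref{defn:trans}, $\trans{w(i)}{w(j)}$ is the unique element of $\tilc$ with the prescribed action on $\mathbb{Z}$, and $\tilc$ acts faithfully on $\mathbb{Z}$ through Shi's realization (Lemma~\ref{lem:Cperm}). So once $w\trans{i}{j}w^{-1}$, which is an element of $\tilc$, is shown to act on $\mathbb{Z}$ by exactly the rule in Definition~\ref{defn:trans} with $(i,j)$ replaced by $(w(i),w(j))$, the two elements coincide.

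The first step collects three structural properties of an arbitrary $w\in\tilc$, all taken from Lemma~\ref{lem:CpermDes}.
\begin{enumerate}
\item Periodicity. Apply \eqref{eq:CpermDesSym} with $a=0$ and with $a=1$ and subtract; this gives $w(x+2(n+1))=w(x)+2(n+1)$, and hence $w(x+q(2n+2))=w(x)+q(2n+2)$ for all $q\in\mathbb{Z}$.
\item Oddness. Lemma~\ref{lem:CpermDes} gives $w(-x)=-w(x)$.
\item Preservation of the excluded set. We have $w(a(n+1))=a(n+1)$. Since $w$ is a bijection, it maps $\mathbb{Z}\setminus(n+1)\mathbb{Z}$ onto itself.
\end{enumerate}
From these, $w(i),w(j)\notin(n+1)\mathbb{Z}$. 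Moreover, if $w(j)=w(i)+q(2n+2)$, then periodicity gives $w(j)=w(i+q(2n+2))$, and injectivity forces $j\equiv i \bmod (2n+2)$, contradicting the hypothesis. Hence $\trans{w(i)}{w(j)}$ is defined.

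The second step is the pointwise check. Write $y=w(x)$, so that $w\trans{i}{j}w^{-1}(y)=w(\trans{i}{j}(x))$. I split into the five cases of Definition~\ref{defn:trans} for $x$.
\begin{itemize}
\item If $x=i+q(2n+2)$, then by periodicity $y=w(i)+q(2n+2)$, and the image is $w(j+q(2n+2))=w(j)+q(2n+2)$. This is exactly the first line of the definition of $\trans{w(i)}{w(j)}$.
\item If $x=-i+q(2n+2)$, oddness together with periodicity gives $y=-w(i)+q(2n+2)$, with image $-w(j)+q(2n+2)$. This matches the third line.
\item The cases $x=j+q(2n+2)$ and $x=-j+q(2n+2)$ are handled symmetrically and match the second and fourth lines.
\item In the remaining case, $x$ lies outside the four residue classes $\pm i,\pm j \bmod (2n+2)$. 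By the same periodicity and oddness, $w$ maps the union of these four classes bijectively onto the union of the classes $\pm w(i),\pm w(j)\bmod (2n+2)$. Hence $y$ also lies outside them, and $w\trans{i}{j}w^{-1}(y)=w(x)=y$, which matches the ``otherwise'' line.
\end{itemize}

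I expect the main subtlety, rather than a hard obstacle, to be bookkeeping. When $j\equiv -i \bmod (2n+2)$, or when $i\equiv -i$ fails to separate classes, the cases of Definition~\ref{defn:trans} overlap. One must check that the overlapping prescriptions agree, and that the same coincidences occur for $(w(i),w(j))$; this follows because $w$ preserves every relation of the form $x\equiv \pm x' \bmod (2n+2)$ by periodicity and oddness. The other point to state carefully is faithfulness, i.e.\ that an element of $\tilc$ is determined by its permutation of $\mathbb{Z}$. This is implicit in the phrase ``the unique element'' in Definition~\ref{defn:trans} and comes from Shi's realization, so I would simply cite it.
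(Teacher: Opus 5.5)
Your proof is correct, and it takes a genuinely different route from the paper's.

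\textbf{What you do.} You verify $w\,t^c_{i,j}\,w^{-1}=t^c_{w(i),w(j)}$ pointwise on $\mathbb{Z}$, for arbitrary $w\in\tilc$ and arbitrary admissible $(i,j)$. The only inputs are:
\begin{itemize}
\item the periodicity $w(x+2n+2)=w(x)+2n+2$ and the oddness $w(-x)=-w(x)$, both extracted from Lemma~\ref{lem:CpermDes};
\item the fact that $w$ fixes $(n+1)\mathbb{Z}$ pointwise;
\item faithfulness of Shi's realization.
\end{itemize}

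\textbf{What the paper does.} The paper stays inside the Coxeter presentation. It restricts to Case~A of Remark~\ref{rk:3cases}, reduces to $w=s_a$, and writes $t^c_{i,j}$ as one of the explicit words $V$, $W$, $M$. It then checks $s_a t^c_{i,j}=t^c_{s_a(i),s_a(j)}s_a$ through a long case analysis of braid and commutation manipulations.

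\textbf{What each route buys.} The paper's computation produces explicit Coxeter words for transpositions and relations among them. Your argument buys brevity and uniformity.

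In particular, you avoid the least explicit step of the paper's route, namely the reduction to Case~A and to simple reflections. Passing from $s_a$ to a general $w$ by induction on length needs the identity for $s_a$ with arbitrary pairs $(w'(i),w'(j))$, not just Case~A pairs. Simple reflections do move Case~A indices out of Case~A: in Subcases~1.8, 2.6 and~3.10, $s_n(n)=n+2$ appears. The appeal to Remark~\ref{re:3cases1} for Cases~B and~C is not spelled out.

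Both routes ultimately rest on faithfulness of the permutation realization. The identification of the words in Remark~\ref{rk:3cases} with $t^c_{i,j}$ is itself made through their action on $\mathbb{Z}$ and the uniqueness in Definition~\ref{defn:trans}. So citing faithfulness, as you plan, costs nothing extra.

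\textbf{A small tidy-up.} The only overlap among the cases of Definition~\ref{defn:trans} is $j\equiv -i \pmod{2n+2}$. The alternative $i\equiv -i$ would force $i\in(n+1)\mathbb{Z}$, which is excluded. In the overlap the prescriptions agree. That consistency of the definition is all your pointwise check needs; you do not separately need to track which coincidences occur for $(w(i),w(j))$.
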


\begin{proof}
By Remark~\mref{re:3cases1}, it suffices to consider Case~A in Remark~\mref{rk:3cases}, that is, $i,j \in[-n,n]\setminus\{0\}$ with $|i|\leq j$. So we only need to prove
\begin{equation}
s_a \trans{i}{j} = \trans{s_a(i)}{s_a(j)} s_a\,\text{ for each }\, a\in[0,n].
\mlabel{eq:refs}
\end{equation}
The proof mainly employs the generating relations~(\ref{eq:Cgene}) of $s_0,\ldots,s_n$.
Under the assumptions $i,j\in[-n,n]\setminus\{0\}$ and $|i|\leq j$, we
necessarily have $j>0$. There are therefore exactly three mutually exclusive
possibilities: either $0<i<j$, or $i<0<j$ with $|i|=j$, or $i<0<j$ with
$|i|<j$. Accordingly, we divide the proof into the following three cases.

\noindent{\bf Case~1.} $1\leq i<j\leq n$. We divided the proof into the following subcases.

\noindent{\bf Subcase~1.1.} $0=a<i=a+1 =1$. Then $s_a(i) = s_0(1) = -1$, $s_a(j) =s_0(j) = j$  and
\begin{align*}
s_a \trans{i}{j} = &\ s_0 V^{j-1,j-1}_{i} \hspace{1cm}(\text{by~(\ref{eq:t11})})\\
=&\ s_0 I\downarrow^{j-1}_{2} s_1 I\uparrow^{j-1}_{2} \\
=&\ I\downarrow^{j-1}_{2} s_0 s_1 I\uparrow^{j-1}_{2}\hspace{1cm}(\text{$s_0 s_b = s_b s_0$ for $b\in[2,n]$})\\
=&\ I\downarrow^{j-1}_{2} (s_1s_0s_0s_1)s_0 s_1 I\uparrow^{j-1}_{2}\hspace{1cm}(\text{$s_a^2=1$})\\
=&\ I\downarrow^{j-1}_{1} s_0 (s_0 s_1 s_0 s_1)I\uparrow^{j-1}_{2}\\
=&\ I\downarrow^{j-1}_{1} s_0 (s_1 s_0 s_1 s_0)I\uparrow^{j-1}_{2} \hspace{1cm}(\text{$s_0 s_1 s_0 s_1 = s_1 s_0 s_1 s_0$})\\
=&\ I\downarrow^{j-1}_{1} s_0 s_1 s_0 s_1 I\uparrow^{j-1}_{2} s_0 \hspace{1cm}(\text{$s_0 s_b = s_b s_0$ for $b\in[2,n]$})\\
=&\ \reW{0}{0}{j-1}{1}{j-1} s_0\\
=&\ \trans{-1}{j}s_a \hspace{1cm}(\text{by~(\ref{eq:t13})})\\
=&\ \trans{s_a(i)}{s_a(j)}s_a.
\end{align*}

\noindent{\bf Subcase~1.2.} $0<a<i=a+1$. Then $s_a(i) = s_a(a+1) = a$, $s_a(j) = j$ and \begin{align*}
s_a \trans{i}{j} = &\ s_a V^{j-1,j-1}_{a+1} \hspace{1cm}(\text{by~(\ref{eq:t11})})\\
=&\ s_a I\downarrow^{j-1}_{a+2} s_{a+1} I\uparrow^{j-1}_{a+2}\\
=&\ I\downarrow^{j-1}_{a+2} s_a s_{a+1} I\uparrow^{j-1}_{a+2}
\hspace{1cm}(\text{$s_a s_b = s_b s_a$ for $b\in[a+2,n]$})\\
=&\ I\downarrow^{j-1}_{a+2} s_a s_{a+1} (s_a s_a) I\uparrow^{j-1}_{a+2}\hspace{1cm}(\text{$s_a^2=1$})\\
=&\ I\downarrow^{j-1}_{a+2} (s_{a+1} s_a s_{a+1})  s_a I\uparrow^{j-1}_{a+2}
\hspace{1cm}(\text{$s_a s_{a+1} s_a=s_{a+1} s_a s_{a+1}$ for $a\in[1,n-1]$})\\
=&\ I\downarrow^{j-1}_{a+2} s_{a+1} s_a s_{a+1} I\uparrow^{j-1}_{a+2} s_a
\hspace{1cm}(\text{$s_a s_b = s_b s_a$ for $b\in[a+2,n]$})\\
=&\ V_{a}^{j-1,j-1}s_a\\
=&\ \trans{a}{j}s_a \hspace{1cm}(\text{by~(\ref{eq:t11})})\\
=&\ \trans{s_a(i)}{s_a(j)}s_a.
\end{align*}

\noindent{\bf Subcase~1.3.} $0<a<a+1<i$. Then $s_a(i)=i$, $s_a(j)=j$ and
\begin{align*}
s_a \trans{i}{j} \overset{(\ref{eq:t11})}{=} s_a V^{j-1,j-1}_{i}
\overset{(\ref{eq:Cgene})}{=} V^{j-1,j-1}_{i} s_a \overset{(\ref{eq:t11})}{=}\trans{s_a(i)}{s_a(j)} s_a.
\end{align*}

\noindent{\bf Subcase~1.4.} $0<a=i$. Then $s_a(i) = i+1$, $s_a(j) = j$ and
\begin{align*}
s_a \trans{i}{j} = &\ s_i V^{j-1,j-1}_{i} \hspace{1cm}(\text{by~(\ref{eq:t11})})\\
=&\ s_i I\downarrow^{j-1}_{i+2} s_{i+1} s_i s_{i+1} I\uparrow^{j-1}_{i+2}\\
=&\ I\downarrow^{j-1}_{i+2} s_i s_{i+1} s_i s_{i+1} I\uparrow^{j-1}_{i+2}
\hspace{1cm}(\text{$s_i s_b = s_b s_i$ for $b\in[i+2,n]$})\\
=&\ I\downarrow^{j-1}_{i+2} s_{i+1} s_i I\uparrow^{j-1}_{i+2}
\hspace{1cm}(\text{$s_{i+1}^2=1$})\\
=&\ I\downarrow^{j-1}_{i+2} s_{i+1} I\uparrow^{j-1}_{i+2} s_i
\hspace{1cm}(\text{$s_i s_b = s_b s_i$ for $b\in[i+2,n]$})\\
=&\ V^{j-1,j-1}_{i+1}s_i\\
=&\ \trans{i+1}{j} s_i\hspace{1cm}(\text{by~(\ref{eq:t11})})\\
=&\ \trans{s_a(i)}{s_a(j)} s_a.
\end{align*}

\noindent{\bf Subcase~1.5.} $i<a<j-1$. Then $s_a(i) =i$, $s_a(j)=j$ and
\begin{align*}
s_a \trans{i}{j} = &\ s_a V^{j-1,j-1}_{i} \hspace{1cm}(\text{by~(\ref{eq:t11})})\\
=&\ s_a I\downarrow^{j-1}_{a+2} s_{a+1} s_a V_{i}^{a-1,a-1} s_a s_{a+1} I\uparrow^{j-1}_{a+2}\\
=&\ I\downarrow^{j-1}_{a+2} s_a s_{a+1} s_a V_{i}^{a-1,a-1} s_a s_{a+1} I\uparrow^{j-1}_{a+2}
\hspace{1cm}(\text{$s_a s_b = s_b s_a$ for $b\in[a+2,n]$})\\
=&\ I\downarrow^{j-1}_{a+2} s_{a+1} s_a s_{a+1} V_{i}^{a-1,a-1} s_a s_{a+1} I\uparrow^{j-1}_{a+2}
\hspace{1cm}(\text{$s_a s_{a+1} s_a =s_{a+1} s_a s_{a+1}$})\\
=&\ I\downarrow^{j-1}_{a+2} s_{a+1} s_a  V_{i}^{a-1,a-1} s_{a+1} s_a s_{a+1} I\uparrow^{j-1}_{a+2}
\hspace{1cm}(\text{$s_{a+1} s_b = s_b s_{a+1}$ for $b\in[0,a-1]$})\\
=&\ I\downarrow^{j-1}_{a+2} s_{a+1} s_a  V_{i}^{a-1,a-1} s_a s_{a+1} s_a I\uparrow^{j-1}_{a+2}
\hspace{1cm}(\text{$s_{a+1} s_a s_{a+1} = s_a s_{a+1} s_a$})\\
=&\ I\downarrow^{j-1}_{a+2} s_{a+1} s_a  V_{i}^{a-1,a-1} s_a s_{a+1} I\uparrow^{j-1}_{a+2} s_a
\hspace{1cm}(\text{$s_a s_b = s_b s_a$ for $b\in[a+2,n]$})\\
=&\ V^{j-1,j-1}_{i} s_a\\
=&\ \trans{s_a(i)}{s_a(j)} s_a
\hspace{1cm}(\text{by~(\ref{eq:t11})}).
\end{align*}

\noindent{\bf Subcase~1.6.} $a=j-1$. Then $s_a(i) = i$, $s_a(j) = j-1$ and
\begin{align*}
s_a \trans{i}{j} = &\ s_{j-1} V^{j-1,j-1}_{i} \hspace{1cm}(\text{by~(\ref{eq:t11})})\\
=&\ s_{j-1} s_{j-1} V^{j-2,j-2}_{i} s_{j-1}\\
=&\ V^{j-2,j-2}_{i} s_{j-1}
\hspace{1cm}(\text{$s_{j-1}^2=1$})\\
=&\ \trans{i}{j-1}s_{j-1}\hspace{1cm}(\text{by~(\ref{eq:t11})})\\
=&\ \trans{s_a(i)}{s_a(j)} s_a.
\end{align*}

\noindent{\bf Subcase~1.7.} $a=j<n$. Then $s_a(i) = i$, $s_a(j) = j+1$ and
\begin{align*}
s_a \trans{i}{j} =&\ s_j V^{j-1,j-1}_{i} \hspace{1cm}(\text{by~(\ref{eq:t11})})\\
=&\ s_j V^{j-1,j-1}_{i} s_j s_j \hspace{1cm}(\text{$s_{j-1}^2=1$})\\
=&\ V^{j,j}_{i} s_j\\
=&\ \trans{s_a(i)}{s_a(j)} s_a \hspace{1cm}(\text{by~(\ref{eq:t11})}).
\end{align*}

\noindent{\bf Subcase~1.8.} $a=j=n$. Then $s_a(i) = i$ and $s_a(j) = s_n(n) = n+2$.
Note that
\begin{equation}
\begin{aligned}
\trans{s_a(i)}{s_a(j)} = &\ \trans{i}{n+2}\\
=&\ \trans{-(n+2) + 2n+2}{-i+2n+2}\hspace{1cm}(\text{by~(\ref{eq:tfact1}) and~(\ref{eq:tfact2})})\\
=&\ \trans{n}{-i+2n+2} \\
=&\ \reM{n}{n}{2n+2-(-i+2n+2)}{n-1}{2n+2-(-i+2n+2)}
\hspace{1cm}(\text{by~(\ref{eq:t23})})\\
=&\ \reM{n}{n}{i}{n-1}{i}.
\end{aligned}
\mlabel{eq:a=j=n}
\end{equation}
For each $i,j\in[1,n]$ such that $i<j$, we have
\begin{align*}
V^{j-1,j-1}_{i} =&\ I\downarrow^{j-1}_{i+2} s_{i+1} s_i s_{i+1} I\uparrow^{j-1}_{i+2}\\
=&\ I\downarrow^{j-1}_{i+2} s_i s_{i+1} s_i I\uparrow^{j-1}_{i+2}
\hspace{1cm} (\text{$s_{i+1} s_i s_{i+1} = s_i s_{i+1} s_i$})\\
=&\ s_i I\downarrow^{j-1}_{i+2} s_{i+1} I\uparrow^{j-1}_{i+2} s_i
\hspace{1cm} (\text{$s_i s_b = s_b s_i$ for $b\in [i+2,n]$})\\
=&\ s_i V^{j-1,j-1}_{i+1} s_i,
\end{align*}
which follows that
\begin{equation}
V^{j-1,j-1}_{i} = \Lambda^{j-1}_{i,i}\,\text{ for }\, i,j\in[1,n],\, i<j.
\mlabel{eq:V=Lam}
\end{equation}
Thus,
\begin{align*}
s_a\trans{i}{j} =&\ s_n V^{n-1,n-1}_{i} \hspace{1cm}(\text{by~(\ref{eq:t11})})\\
=&\ s_n \Lambda^{n-1}_{i,i}\hspace{1cm}(\text{by~(\ref{eq:V=Lam})})\\
=&\ s_n I\uparrow_{i}^{n-2} s_{n-1} I\downarrow_{i}^{n-2}\\
=&\ I\uparrow_{i}^{n-2} s_n s_{n-1} I\downarrow_{i}^{n-2}
\hspace{1cm} (\text{$s_n s_b = s_b s_n$ for $b\in [0,n-2]$})\\
=&\ I\uparrow_{i}^{n-2} s_n s_{n-1} (s_n s_{n-1} s_{n-1} s_n) I\downarrow_{i}^{n-2}
\hspace{1cm} (\text{$s_i^2 = 1$})\\
=&\ I\uparrow_{i}^{n-2} s_n s_{n-1} s_n s_{n-1} s_{n-1}  I\downarrow_{i}^{n-2} s_n
\hspace{1cm} (\text{$s_n s_b = s_b s_n$ for $b\in [0,n-2]$})\\
=&\ I\uparrow_{i}^{n-2}  s_{n-1} s_n s_{n-1} s_n s_{n-1}  I\downarrow_{i}^{n-2} s_n
\hspace{1cm} (\text{$s_n s_{n-1} s_n s_{n-1} = s_{n-1} s_n s_{n-1} s_n$})\\
=&\ \reM{n}{n}{i}{n-1}{i} s_n\\
=&\ \trans{s_a(i)}{s_a(j)}s_a \hspace{1cm}(\text{by~(\ref{eq:a=j=n})}).
\end{align*}

\noindent{\bf Subcase~1.9.} $a>j$. Then $s_a(i) = i$, $s_a(j) = j$ and
\begin{align*}
s_a\trans{i}{j} \overset{(\ref{eq:t11})}{=} s_a V^{j-1,j-1}_{i} \overset{(\ref{eq:Cgene})}{=} V^{j-1,j-1}_{i} s_a \overset{(\ref{eq:t11})}{=} \trans{i}{j}s_a = \trans{s_a(i)}{s_a(j)} s_a.
\end{align*}

\noindent{\bf Case~2.} $-n\leq i <0 <j \leq n$ and $|i|=j$. The proof in this case is divided into the following subcases.

\noindent{\bf Subcase~2.1.} $0=a<j=a+1=1$. Then $s_a(i) = s_0(-1)=1$, $s_a(j)= s_0(1)=-1$ and
\begin{align*}
s_a\trans{i}{j} \overset{(\ref{eq:t12})}{=} s_0 V_{0}^{j-1,j-1} = s_0 s_0 = V_{0}^{j-1,j-1} s_0 = \trans{i}{j}s_a = \trans{s_a(j)}{s_a(i)}s_a \overset{(\ref{eq:tfact1})}{=} \trans{s_a(i)}{s_a(j)}s_a.
\end{align*}

\noindent{\bf Subcase~2.2.} $0 = a <1<j$. Then $s_a(i) = i$, $s_a(j) = j$ and
\begin{align*}
s_a\trans{i}{j} = &\ s_0 V^{j-1,j-1}_0 \hspace{1cm}(\text{by~(\ref{eq:t12})})\\
=&\ s_0 I\downarrow^{j-1}_2 s_1 s_0 s_1 I\uparrow^{j-1}_2 \\
=&\ I\downarrow^{j-1}_2 s_0 s_1 s_0 s_1 I\uparrow^{j-1}_2 \hspace{1cm} (\text{$s_0 s_b = s_b s_0$ for $b\in [2,n]$})\\
=&\ I\downarrow^{j-1}_2 s_1 s_0 s_1 s_0 I\uparrow^{j-1}_2 \hspace{1cm} (\text{$s_0 s_1 s_0 s_1 = s_1 s_0 s_1 s_0$})\\
=&\ I\downarrow^{j-1}_2 s_1 s_0 s_1 I\uparrow^{j-1}_2 s_0 \hspace{1cm} (\text{$s_0 s_b = s_b s_0$ for $b\in [2,n]$})\\
=&\ V^{j-1,j-1}_0 s_0\\
=&\ \trans{s_a(i)}{s_a(j)} s_a.
\end{align*}

\noindent{\bf Subcase~2.3.} $0< a <j-1$. Then $s_a(i) =i$, $s_a(j)=j$ and
\begin{align*}
s_a\trans{i}{j} = &\ s_a V^{j-1,j-1}_0 \hspace{1cm}(\text{by~(\ref{eq:t12})})\\
=&\ s_a I\downarrow^{j-1}_{a+2} s_{a+1} s_a V_0^{a-1,j-1}\\
=&\ I\downarrow^{j-1}_{a+2} s_a s_{a+1} s_a V_0^{a-1,j-1} \hspace{1cm} (\text{$s_a s_b = s_b s_a$ for $b\in [a+2,n]$})\\
=&\ I\downarrow^{j-1}_{a+2} s_{a+1} s_a s_{a+1} V_0^{a-1,j-1} \hspace{1cm} (\text{$s_a s_{a+1} s_a = s_{a+1} s_a s_{a+1} $})\\
=&\ I\downarrow^{j-1}_{a} s_{a+1} V_0^{a-1,a-1} s_a s_{a+1} I\uparrow^{j-1}_{a+2}\\
=&\ I\downarrow^{j-1}_{a} V_0^{a-1,a-1} s_{a+1} s_a s_{a+1} I\uparrow^{j-1}_{a+2} \hspace{1cm} (\text{$s_{a+1} s_b = s_b s_{a+1}$ for $b\in [0,a-1]$})\\
=&\ I\downarrow^{j-1}_{a} V_0^{a-1,a-1} s_a s_{a+1} s_a  I\uparrow^{j-1}_{a+2}
\hspace{1cm} (\text{$ s_{a+1} s_a s_{a+1} = s_a s_{a+1} s_a $})\\
=&\ I\downarrow^{j-1}_{a} V_0^{a-1,a-1} s_a s_{a+1} I\uparrow^{j-1}_{a+2}s_a
\hspace{1cm} (\text{$s_a s_b = s_b s_a$ for $b\in [a+2,n]$})\\
=&\ V^{j-1,j-1}_0 s_a \\
=&\ \trans{s_a(i)}{s_a(j)} s_a.
\end{align*}

\noindent
{\bf Subcase~2.4.} $0<a =j-1$. Then $s_a(i) = s_{j-1}(-j) = -(j-1)$, $s_a(j) = s_{j-1}(j) = j-1$ and
\begin{align*}
s_a\trans{i}{j} =&\ s_{j-1} V^{j-1,j-1}_0 \hspace{1cm}(\text{by~(\ref{eq:t12})})\\
=&\ s_{j-1}s_{j-1}V^{j-2,j-2}_0 s_{j-1}\\
=&\ V^{j-2,j-2}_0 s_{j-1} \hspace{1cm}(\text{$s_{j-1}^2 = 1$})\\
=&\ \trans{s_a(i)}{s_a(j)}s_a \hspace{1cm}(\text{by~(\ref{eq:t12})}).
\end{align*}

\noindent{\bf Subcase~2.5.} $0<a=j<n$. Then $s_a(i) = s_j(-j) = -(j+1)$, $s_a(j) = s_j(j) = j+1$ and
\begin{align*}
s_a\trans{i}{j} \overset{(\ref{eq:t12})}{=} s_j V_{0}^{j-1,j-1} = s_j V_{0}^{j-1,j-1} s_j s_j = V_{0}^{j,j}s_j  \overset{(\ref{eq:t12})}{=} \trans{s_a(i)}{s_a(j)}s_a.
\end{align*}

\noindent{\bf Subcase~2.6.} $a=j=n$. Then $s_a(i) = s_n(-n) = -(n+2)$, $s_a(j) = s_n(n) = n+2$. Note that
\begin{equation}
\trans{s_a(i)}{s_a(j)} = \trans{-(n+2)}{n+2} = \trans{-(n+2)}{-n}\trans{-n}{n}\trans{n}{n+2} = s_n\trans{-n}{n}s_n.
\mlabel{eq:tnn2}
\end{equation}
The last equation is from Remark~\ref{re:tfact}. Thus
$$ \trans{-(n+2)}{-n} = \trans{-(n+2)+2n+2}{-n+2n+2} = \trans{n}{n+2} = s_n, $$
and so
\begin{align*}
s_a\trans{i}{j} =&\ s_{n} V^{n-1,n-1}_0 \hspace{1cm}(\text{by~(\ref{eq:t12})})\\
=&\ s_{n} V^{n-1,n-1}_0 s_{n} s_n \hspace{1cm}(\text{$s_n^2 = 1$})\\
=&\ s_{n} \trans{-n}{-n}s_n s_n \hspace{1cm}(\text{by~(\ref{eq:t12})})\\
=&\ \trans{s_a(i)}{s_a(j)} s_a\hspace{1cm}(\text{by~(\ref{eq:tnn2})}).
\end{align*}

\noindent
{\bf Subcase~2.7.} $a>j$. Then $s_a(i)=i$, $s_a(j)=j$ and
\begin{align*}
s_a\trans{i}{j} \overset{(\ref{eq:t12})}{=} s_a V_{0}^{j-1,j-1} = V_{0}^{j-1,j-1}s_a \overset{(\ref{eq:t12})}{=} \trans{s_a(i)}{s_a(j)} s_a.
\end{align*}
The second equation holds by $s_a s_b = s_b s_a$ for $b\in [0,a-2]$.

\noindent{\bf Case~3.} $-n\leq i <0 <j \leq n$ and $|i|<j$. We show that the statement is true via the subcases as follows.

\noindent{\bf Subcase~3.1.} $ 0 = a < |i| = a + 1 = 1$. Then $s_a(i) = s_0(-1) = 1$, $s_a(j)=j$ and
\begin{align*}
s_a\trans{i}{j} =&\ s_{0} \reW{0}{0}{j-1}{1}{j-1} \hspace{1cm}(\text{by~(\ref{eq:t13})})\\
=&\ s_0 I\downarrow^{j-1}_2 s_1 s_0 s_1 I\uparrow^{j-1}_0\\
=&\ I\downarrow^{j-1}_2 s_0 s_1 s_0 s_1 I\uparrow^{j-1}_0
\hspace{1cm} (\text{$s_0 s_b = s_b s_0$ for $b\in [2,n]$})\\
=&\ I\downarrow^{j-1}_2 s_1 s_0 s_1 s_0 I\uparrow^{j-1}_0
\hspace{1cm} (\text{$s_0 s_1 s_0 s_1 = s_1 s_0 s_1 s_0$})\\
=&\ I\downarrow^{j-1}_1 s_0 (s_1 s_0 s_0 s_1) I\uparrow^{j-1}_2\\
=&\ I\downarrow^{j-1}_1 s_0 I\uparrow^{j-1}_2
\hspace{1cm} (\text{$s_i^2=1$})\\
=&\ I\downarrow^{j-1}_1 I\uparrow^{j-1}_2 s_0
\hspace{1cm} (\text{$s_0 s_b = s_b s_0$ for $b\in [2,n]$})\\
=&\ V_{1}^{j-1,j-1} s_0\\
=&\ \trans{1}{j} s_a \hspace{1cm}(\text{by~(\ref{eq:t11})})\\
=&\ \trans{s_a(i)}{s_a(j)} s_a.
\end{align*}

\noindent{\bf Subcase~3.2.} $ 0 = a < 1 < |i|$. Then $s_a(i) = i$, $s_a(j) = j$ and
\begin{align*}
s_a\trans{i}{j} =&\ s_{0} \reW{0}{0}{j-1}{-i}{j-1} \hspace{1cm}(\text{by~(\ref{eq:t13})})\\
=&\ s_0 I\downarrow^{j-1}_2 s_1 s_0 s_1 \Lambda_{2,2}^{-i} s_1 s_0 s_1 I\uparrow^{j-1}_2\\
=&\ I\downarrow^{j-1}_2 (s_0 s_1 s_0 s_1) \Lambda_{2,2}^{-i} s_1 s_0 s_1 I\uparrow^{j-1}_2 \hspace{1cm} (\text{$s_0 s_b = s_b s_0$ for $b\in [2,n]$})\\
=&\ I\downarrow^{j-1}_2 s_1 s_0 s_1 s_0 \Lambda_{2,2}^{-i} s_1 s_0 s_1 I\uparrow^{j-1}_2 \hspace{1cm} (\text{$s_0 s_1 s_0 s_1 = s_1 s_0 s_1 s_0$})\\
=&\ I\downarrow^{j-1}_2 s_1 s_0 s_1 \Lambda_{2,2}^{-i} (s_0 s_1 s_0 s_1 ) I\uparrow^{j-1}_2 \hspace{1cm} (\text{$s_0 s_b = s_b s_0$ for $b\in [2,n]$})\\
=&\ I\downarrow^{j-1}_2 s_1 s_0 s_1 \Lambda_{2,2}^{-i} ( s_1 s_0 s_1 s_0 ) I\uparrow^{j-1}_2 \hspace{1cm} (\text{$s_0 s_1 s_0 s_1 = s_1 s_0 s_1 s_0$})\\
=&\ I\downarrow^{j-1}_2 s_1 s_0 s_1 \Lambda_{2,2}^{-i} s_1 s_0 s_1 I\uparrow^{j-1}_2 s_0 \hspace{1cm} (\text{$s_0 s_1 s_0 s_1 = s_1 s_0 s_1 s_0$})\\
=&\ \reW{0}{0}{j-1}{-i}{j-1} s_0\\
=&\ \trans{s_a(i)}{s_a(j)}s_a \hspace{1cm}(\text{by~(\ref{eq:t13})}).
\end{align*}

\noindent{\bf Subcase~3.3.} $ 0 < a < a+1 < |i|$. Then $s_a(i) = i$, $s_a(j) = j$ and
\begin{align*}
s_a\trans{i}{j} =&\ s_a \reW{0}{0}{j-1}{-i}{j-1} \hspace{1cm}(\text{by~(\ref{eq:t13})})\\
=&\ s_a I\downarrow^{j-1}_{a+2} s_{a+1} s_a V_0^{a-1,a-1} s_a s_{a+1} \Lambda_{a+2,a+2}^{-i} s_{a+1}s_a V_0^{a-1,a-1} s_a s_{a+1} I\uparrow^{j-1}_{a+2}\\
=&\ I\downarrow^{j-1}_{a+2} (s_a s_{a+1} s_a) V_0^{a-1,a-1} s_a s_{a+1} \Lambda_{a+2,a+2}^{-i} s_{a+1}s_a V_0^{a-1,a-1} s_a s_{a+1} I\uparrow^{j-1}_{a+2}\\
&\ \hspace{5cm} (\text{$s_a s_b = s_b s_a$ for $b\in [a+2,n]$})\\
=&\ I\downarrow^{j-1}_{a+2} s_{a+1} s_a s_{a+1} V_0^{a-1,a-1} s_a s_{a+1} \Lambda_{a+2,a+2}^{-i} s_{a+1}s_a V_0^{a-1,a-1} s_a s_{a+1} I\uparrow^{j-1}_{a+2} \\
&\ \hspace{6.5cm} (\text{$s_a s_{a+1} s_a= s_{a+1} s_a s_{a+1}$})\\
=&\ I\downarrow^{j-1}_{a+2} s_{a+1} s_a  V_0^{a-1,a-1} (s_{a+1}s_a s_{a+1} ) \Lambda_{a+2,a+2}^{-i} s_{a+1}s_a V_0^{a-1,a-1} s_a s_{a+1} I\uparrow^{j-1}_{a+2} \\
&\ \hspace{5cm} (\text{$s_{a+1} s_b = s_b s_{a+1}$ for $b\in [0,a-1]$})\\
=&\ I\downarrow^{j-1}_{a+2} s_{a+1} s_a  V_0^{a-1,a-1} s_a s_{a+1}s_a \Lambda_{a+2,a+2}^{-i} s_{a+1}s_a V_0^{a-1,a-1} s_a s_{a+1} I\uparrow^{j-1}_{a+2} \\
&\ \hspace{6.5cm} (\text{$s_{a+1} s_a s_{a+1} = s_a s_{a+1} s_a$})\\
=&\ I\downarrow^{j-1}_{a+2} s_{a+1} s_a  V_0^{a-1,a-1} s_a s_{a+1} \Lambda_{a+2,a+2}^{-i} (s_a s_{a+1}s_a) V_0^{a-1,a-1} s_a s_{a+1} I\uparrow^{j-1}_{a+2} \\
&\ \hspace{5cm} (\text{$s_a s_b = s_b s_a$ for $b\in [a+2,n]$})\\
=&\ I\downarrow^{j-1}_{a+2} s_{a+1} s_a  V_0^{a-1,a-1} s_a s_{a+1} \Lambda_{a+2,a+2}^{-i} s_{a+1} s_a s_{a+1} V_0^{a-1,a-1} s_a s_{a+1} I\uparrow^{j-1}_{a+2} \\
&\ \hspace{6.5cm} (\text{$s_a s_{a+1} s_a = s_{a+1} s_a s_{a+1}$})\\
=&\ I\downarrow^{j-1}_{a+2} s_{a+1} s_a  V_0^{a-1,a-1} s_a s_{a+1} \Lambda_{a+2,a+2}^{-i} s_{a+1} s_a V_0^{a-1,a-1} (s_{a+1} s_a s_{a+1}) I\uparrow^{j-1}_{a+2} \\
&\ \hspace{5cm} (\text{$s_{a+1} s_b = s_b s_{a+1}$ for $b\in [0,a-1]$})\\
=&\ I\downarrow^{j-1}_{a+2} s_{a+1} s_a  V_0^{a-1,a-1} s_a s_{a+1} \Lambda_{a+2,a+2}^{-i} s_{a+1} s_a V_0^{a-1,a-1} s_a s_{a+1} s_a I\uparrow^{j-1}_{a+2} \\
&\ \hspace{6.5cm} (\text{$s_{a+1} s_a s_{a+1} = s_a s_{a+1} s_a$})\\
=&\ I\downarrow^{j-1}_{a+2} s_{a+1} s_a  V_0^{a-1,a-1} s_a s_{a+1} \Lambda_{a+2,a+2}^{-i} s_{a+1} s_a V_0^{a-1,a-1} s_a s_{a+1}  I\uparrow^{j-1}_{a+2} s_a \\
&\ \hspace{5cm} (\text{$s_a s_b = s_b s_a$ for $b\in [a+2,n]$})\\
=&\ \reW{0}{0}{j-1}{-i}{j-1} s_a \\
=&\ \trans{s_a(i)}{s_a(j)}s_a \hspace{1cm}(\text{by~(\ref{eq:t13})}).
\end{align*}

\noindent{\bf Subcase~3.4.} $ 0 < a < a+1 = |i|$. Then $s_a(i) = s_a(-(a+1)) = -a$, $s_a(j) = j$ and
\begin{align*}
s_a\trans{i}{j} =&\ s_a \reW{0}{0}{j-1}{a+1}{j-1} \hspace{1cm}(\text{by~(\ref{eq:t13})})\\
=&\ s_a I\downarrow^{j-1}_{a+2} s_{a+1}s_a V_0^{a-1,a-1} s_a s_{a+1} s_a V_0^{a-1,j-1}\\
=&\ I\downarrow^{j-1}_{a+2} (s_a s_{a+1}s_a) V_0^{a-1,a-1} s_a s_{a+1} s_a V_0^{a-1,j-1}
\hspace{1cm} (\text{$s_a s_b = s_b s_a$ for $b\in [a+2,n]$})\\
=&\ I\downarrow^{j-1}_{a+2} s_{a+1} s_a s_{a+1} V_0^{a-1,a-1} s_a s_{a+1} s_a V_0^{a-1,j-1} \hspace{1cm} (\text{$s_a s_{a+1} s_a = s_{a+1} s_a s_{a+1}$})\\
=&\ I\downarrow^{j-1}_{a+2} s_{a+1} s_a V_0^{a-1,a-1} (s_{a+1} s_a s_{a+1}) s_a V_0^{a-1,j-1}\hspace{1cm} (\text{$s_{a+1} s_b = s_b s_{a+1}$ for $b\in [0,a-1]$})\\
=&\ I\downarrow^{j-1}_{a+2} s_{a+1} s_a V_0^{a-1,a-1} s_a s_{a+1} s_a s_a V_0^{a-1,j-1}\hspace{1cm} (\text{$s_{a+1} s_a s_{a+1} = s_a s_{a+1} s_a$})\\
=&\ I\downarrow^{j-1}_{a+2} s_{a+1} s_a V_0^{a-1,a-1} s_a s_{a+1} V_0^{a-1,j-1}
\hspace{1cm} (\text{$s_a^2 = 1$})\\
=&\ I\downarrow^{j-1}_{a+2} s_{a+1} s_a V_0^{a-1,a-1} s_a s_{a+1} V_0^{a-1,a-1}
s_a s_{a+1} I\uparrow^{j-1}_{a+2}\\
=&\ I\downarrow^{j-1}_{a+2} s_{a+1} s_a V_0^{a-1,a-1} s_a V_0^{a-1,a-1}
(s_{a+1} s_a s_{a+1}) I\uparrow^{j-1}_{a+2}\\
& \hspace{6cm} (\text{$s_{a+1} s_b = s_b s_{a+1}$ for $b\in [0,a-1]$})\\
=&\ I\downarrow^{j-1}_{a+2} s_{a+1} s_a V_0^{a-1,a-1} s_a V_0^{a-1,a-1}
s_a s_{a+1}s_a I\uparrow^{j-1}_{a+2} \hspace{1cm} (\text{$s_{a+1} s_a s_{a+1} = s_a s_{a+1} s_a$})\\
=&\ I\downarrow^{j-1}_{a+2} s_{a+1} s_a V_0^{a-1,a-1} s_a V_0^{a-1,a-1}
s_a s_{a+1} I\uparrow^{j-1}_{a+2} s_a \hspace{1cm} (\text{$s_a s_b = s_b s_a$ for $b\in [a+2,n]$})\\
=&\ \reW{0}{0}{j-1}{a}{j-1} s_a\\
=&\ \trans{s_a(i)}{s_a(j)}s_a  \hspace{1cm}(\text{by~(\ref{eq:t13})}).
\end{align*}

\noindent{\bf Subcase~3.5.} $ 0 < a = |i|=j-1$. Then $s_a(i) = s_{j-1}(-(j-1)) = -j$ and $s_a(j) = s_{j-1}(j) = j-1$. Note that
\begin{align*}
s_a\trans{i}{j}s_a = s_{j-1} \trans{-(j-1)}{j} s_{j-1} = \trans{-j}{j-1},
\end{align*}
where the last equation holds by Definition~\ref{defn:trans}. Thus,
\[
s_a\trans{i}{j} = \trans{-j}{j-1}s_a = \trans{s_a(i)}{s_a(j)}s_a.
\]

\noindent{\bf Subcase~3.6.} $ 0 < a = |i|<j-1$. Then $s_a(i) = s_a(-a) = -(a+1)$, $s_a(j) = j$ and
\begin{align*}
s_a\trans{i}{j} =&\ s_a \reW{0}{0}{j-1}{a}{j-1} \hspace{1cm}(\text{by~(\ref{eq:t13})})\\
=&\ s_a I\downarrow^{j-1}_{a+2} s_{a+1} s_a V_0^{a-1,a-1} s_a V_0^{a-1,j-1}\\
=&\ I\downarrow^{j-1}_{a+2} (s_a s_{a+1} s_a) V_0^{a-1,a-1} s_a V_0^{a-1,j-1}
\hspace{1cm} (\text{$s_a s_b = s_b s_a$ for $b\in [a+2,n]$})\\
=&\ I\downarrow^{j-1}_{a+2} s_{a+1} s_a s_{a+1} V_0^{a-1,a-1} s_a V_0^{a-1,j-1}
\hspace{1cm} (\text{$s_a s_{a+1} s_a = s_{a+1} s_a s_{a+1}$})\\
=&\ I\downarrow^{j-1}_{a+2} s_{a+1} s_a V_0^{a-1,a-1} s_{a+1} s_a V_0^{a-1,j-1}
\hspace{1cm} (\text{$s_{a+1} s_b = s_b s_{a+1}$ for $b\in [0,a-1]$})\\
=&\ V_0^{j-1,a-1} s_{a+1} s_a V_0^{a-1,j-1}\\
=&\ V_0^{j-1,a-1} s_{a+1} s_a s_{a+1} s_{a+1} V_0^{a-1,j-1}
\hspace{1cm} (\text{$s_{a+1}^2=1$})\\
=&\ V_0^{j-1,a-1} s_a s_{a+1} s_a s_{a+1} V_0^{a-1,j-1}
\hspace{1cm} (\text{$s_{a+1} s_a s_{a+1} = s_a s_{a+1} s_a$})\\
=&\ V_0^{j-1,a-1} s_a s_{a+1} s_a s_{a+1} V_0^{a-1,a-1} s_a s_{a+1} I\uparrow^{j-1}_{a+2}\\
=&\ V_0^{j-1,a-1} s_a s_{a+1} s_a V_0^{a-1,a-1} s_{a+1} s_a s_{a+1} I\uparrow^{j-1}_{a+2} \hspace{1cm} (\text{$s_{a+1} s_b = s_b s_{a+1}$ for $b\in [0,a-1]$})\\
=&\ V_0^{j-1,a-1} s_a s_{a+1} s_a V_0^{a-1,a-1} s_a s_{a+1} s_a I\uparrow^{j-1}_{a+2}\hspace{1cm} (\text{$s_{a+1} s_a s_{a+1} = s_a s_{a+1} s_a$})\\
=&\ V_0^{j-1,a-1} s_a s_{a+1} s_a V_0^{a-1,a-1} s_a s_{a+1} I\uparrow^{j-1}_{a+2} s_a \hspace{1cm} (\text{$s_a s_b = s_b s_a$ for $b\in [a+2,n]$})\\
=&\ \reW{0}{0}{j-1}{a+1}{j-1} s_a\\
=&\ \trans{s_a(i)}{s_a(j)} s_a \hspace{1cm}(\text{by~(\ref{eq:t13})}).
\end{align*}

\noindent{\bf Subcase~3.7.} $ 0 < |i| <a <j-1$. Then $s_a(i) = i$, $s_a(j) = j$ and
\begin{align*}
s_a\trans{i}{j} =&\ s_a \reW{0}{0}{j-1}{-i}{j-1} \hspace{1cm}(\text{by~(\ref{eq:t13})})\\
=&\ s_a I\downarrow^{j-1}_{a+2} s_{a+1} s_a \reW{0}{0}{a-1}{-i}{a-1} s_a s_{a+1} I\uparrow^{j-1}_{a+2}\\
=&\ I\downarrow^{j-1}_{a+2} s_a s_{a+1} s_a \reW{0}{0}{a-1}{-i}{a-1} s_a s_{a+1} I\uparrow^{j-1}_{a+2} \hspace{1cm} (\text{$s_a s_b = s_b s_a$ for $b\in [a+2,n]$})\\
=&\ I\downarrow^{j-1}_{a+2} s_{a+1} s_a s_{a+1} \reW{0}{0}{a-1}{-i}{a-1} s_a s_{a+1} I\uparrow^{j-1}_{a+2}\hspace{1cm} (\text{$s_a s_{a+1} s_a = s_{a+1} s_a s_{a+1}$})\\
=&\ I\downarrow^{j-1}_{a+2} s_{a+1} s_a \reW{0}{0}{a-1}{-i}{a-1} s_{a+1} s_a s_{a+1} I\uparrow^{j-1}_{a+2}\hspace{1cm} (\text{$s_{a+1} s_b = s_b s_{a+1}$ for $b\in [0,a-1]$})\\
=&\ I\downarrow^{j-1}_{a+2} s_{a+1} s_a \reW{0}{0}{a-1}{-i}{a-1} s_a s_{a+1} s_a I\uparrow^{j-1}_{a+2} \hspace{1cm} (\text{$s_{a+1} s_a s_{a+1} = s_a s_{a+1} s_a$})\\
=&\ I\downarrow^{j-1}_{a+2} s_{a+1} s_a \reW{0}{0}{a-1}{-i}{a-1} s_a s_{a+1} I\uparrow^{j-1}_{a+2} s_a\hspace{1cm} (\text{$s_a s_b = s_b s_a$ for $b\in [a+2,n]$})\\
=&\ \reW{0}{0}{j-1}{-i}{j-1} s_a\\
=&\ \trans{s_a(i)}{s_a(j)} s_a \hspace{1cm}(\text{by~(\ref{eq:t13})}).
\end{align*}

\noindent{\bf Subcase~3.8.} $ 0 < |i| <a =j-1$. Then $s_a(i) = i$, $s_a(j) = s_{j-1}(j)=j-1$ and
\begin{align*}
s_a\trans{i}{j} \overset{(\ref{eq:t13})}{=} s_{j-1} \reW{0}{0}{j-1}{-i}{j-1} = s_{j-1} s_{j-1} \reW{0}{0}{j-2}{-i}{j-2} s_{j-1} \overset{(\ref{eq:Cgene})}{=}  \reW{0}{0}{j-2}{-i}{j-2} s_{j-1} \overset{(\ref{eq:t13})}{=} \trans{s_a(i)}{s_a(j)} s_a.
\end{align*}

\noindent{\bf Subcase~3.9.} $a=j<n$. Then $s_a(i) = i$, $s_a(j)=j+1$ and
\begin{align*}
s_a\trans{i}{j} \overset{(\ref{eq:t13})}{=} s_j \reW{0}{0}{j-1}{-i}{j-1} \overset{(\ref{eq:Cgene})}{=} s_j \reW{0}{0}{j-1}{-i}{j-1} s_j s_j = \reW{0}{0}{j}{-i}{j}s_j  \overset{(\ref{eq:t13})}{=} \trans{s_a(i)}{s_a(j)} s_a.
\end{align*}

\noindent{\bf Subcase~3.10.} $a=j=n$. Then $s_a(i) = i$ and $s_a(j) = s_n(n) = n+2$. It follows from Case~C in Remark~\mref{rk:3cases} that
\begin{equation}
\trans{s_a(i)}{s_a(j)} = \trans{i}{n+2} = s_n \trans{i}{n} s_n
\mlabel{eq:tin2}
\end{equation}
Hence,
\begin{align*}
s_a\trans{i}{j} \overset{(\ref{eq:t13})}{=} s_n \reW{0}{0}{n-1}{-i}{n-1}
\overset{(\ref{eq:Cgene})}{=} s_n \reW{0}{0}{n-1}{-i}{n-1} s_n s_n \overset{(\ref{eq:t13})}{=} s_n \trans{i}{n} s_n s_n \overset{(\ref{eq:tin2})}{=} \trans{s_a(i)}{s_a(j)} s_a.
\end{align*}

\noindent{\bf Subcase~3.11.} $a>j$. Then $s_a(i) = i$, $s_a(j) =j$ and
\begin{align*}
s_a\trans{i}{j}\overset{(\ref{eq:Cgene})}{=}\trans{i}{j}s_a = \trans{s_a(i)}{s_a(j)} s_a.
\end{align*}
This completes the proof.
\end{proof}

We are now in a position to combine the strong-cover description with the
type C transpositions introduced above.  The following theorem gives the
precise correspondence between a strong marked cover of symmetric cores
and the associated type C transposition, including an explicit recovery
of the marking.

\begin{theorem}
Let $C:=(\SMC{\tau}{c}{\kappa})$ be a strong marked cover with marking $m(C):=c$. Assume that $\tau = \mathfrak{a}(w)$, $\kappa=\mathfrak{a}(u)$ for some $w,u\in\tilc^0$. Then $u=w\trans{i}{j}$ for some $i,j\in\mathbb{Z}\setminus\{ q(n+1),q\in\mathbb{Z}\}$, such that
$$w(j) = u(i) = c+\lfloor c/n \rfloor+1.$$
\mlabel{thm:main}
\end{theorem}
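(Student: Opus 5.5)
Since $\tau\Rightarrow\kappa$ is a strong cover, Lemma~\ref{lem:SCSBO}~(\ref{it:SCSBO1}) and Definition~\ref{defn:SC} give $w<u$ in the strong order with $\ell(u)=\ell(w)+1$. Every Bruhat cover in a Coxeter group is obtained by multiplying by a reflection, and the reflections of $\tilc$ are the conjugates $v s_a v^{-1}$. Writing $s_a$ as a type C transposition via~(\ref{eq:tfact3}) and applying Theorem~\ref{thm:reflection} turns each such conjugate into $\trans{v(a)}{v(a')}$. Hence $u=w\trans{i}{j}$ for some admissible $i,j$.

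At this stage the pair $\{i,j\}$ is determined only up to the identifications~(\ref{eq:tfact1}) and~(\ref{eq:tfact2}). The content of the theorem is that a representative can be chosen so that $w(j)=c+\lfloor c/n\rfloor+1$. Note that $u(i)=w\trans{i}{j}(i)=w(j)$ holds automatically, so only one equality needs proof.

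To obtain it, I would pass to edge sequences. By Proposition~\ref{prop:twoaction} and induction on a reduced word, $p(\mathfrak{a}(v))=v\bigl(p(\emptyset)\bigr)$ for every $v\in\tilc^0$, in the sense $p(\mathfrak{a}(v))_x=p(\emptyset)_{v(x)}$ up to the convention in $s_i(p_j)=p_{s_i(j)}$. Here $p(\emptyset)$ is $1$ at negative labels and $0$ at positive labels. Then
\[
p(\kappa)=w\trans{i}{j}\bigl(p(\emptyset)\bigr)=\trans{w(i)}{w(j)}\bigl(p(\tau)\bigr)
\]
by Theorem~\ref{thm:reflection}. So the edge sequence of $\kappa$ is obtained from that of $\tau$ by simultaneously swapping the letters at positions $A:=w(i)$ and $B:=w(j)$, their negatives, and all their translates by $2n+2$.

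A single swap of a $1$ at position $A$ with a $0$ at position $B>A$ adds a ribbon. By the defining rule of $p$, that ribbon's southeast-most box is the box whose east edge is the south step $p_B$, and its content $c'$ satisfies $B=c'+\lfloor c'/n\rfloor+1$. The map $c'\mapsto c'+\lfloor c'/n\rfloor+1$ is a bijection from $\mathbb{Z}$ onto $\mathbb{Z}\setminus(n+1)\mathbb{Z}$. Therefore each ribbon of $\kappa/\tau$, which by Lemma~\ref{lem:SCSBO} is a disjoint union of ribbons, corresponds to one swapped pair in the orbit of $\{A,B\}$ under negation and translation by $2n+2$, and its head content is read off from the larger position of the pair.

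Given the marking $c$, let $B_0:=c+\lfloor c/n\rfloor+1$. This $B_0$ is the larger position of one pair in the orbit, say $B_0=\epsilon B+q(2n+2)$ with $\epsilon=\pm1$. Replacing $(i,j)$ by $(\pm i,\pm j)$ and using~(\ref{eq:tfact1}), the symmetry $w(-x)=-w(x)$ from Lemma~\ref{lem:CpermDes}, and the swap $\trans{i}{j}=\trans{j}{i}$, we may assume $\epsilon=1$ with $B_0$ the larger entry. For the translate, (\ref{eq:tfact2}) together with $w(x+2(n+1))=w(x)+2(n+1)$ (which follows from~(\ref{eq:CpermDesSym}) applied twice) lets us replace $j$ by $j+q(2n+2)$ and $i$ by $i+q(2n+2)$ without changing $\trans{i}{j}$. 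This gives $w(j)=B_0$.

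The main obstacle is the claim that swapping the orbit of $\{A,B\}$ adds exactly the ribbons of $\kappa/\tau$, one per pair, each with head at the south step of the larger index. In particular the swapped pairs must not interact, and the ordering $p(\tau)_A=1$, $p(\tau)_B=0$ with $A<B$ must hold; the latter is what $\dg{\tau}\subset\dg{\kappa}$ should force. I would argue this from the containment $\dg{\tau}\subset\dg{\kappa}$ and the core property: interlocking swaps would produce a removable $2n$-ribbon or break containment. I would first verify the claim against Example~\ref{ex:SMC}.
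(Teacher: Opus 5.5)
Your outline is the paper's. The cover gives $u=wt$ for a reflection $t$. Reflections are type C transpositions by \eqref{eq:tfact3} and Theorem~\ref{thm:reflection}. Then $p(\kappa)$ is obtained from $p(\tau)$ by $\trans{A}{B}$ with $A=w(i)$, $B=w(j)$. Finally the indices are re-chosen via \eqref{eq:tfact1}, \eqref{eq:tfact2}, $w(-x)=-w(x)$ and $w(x+2n+2)=w(x)+2n+2$, a normalization the paper leaves implicit.

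The gap is the step you flag yourself. You place $B_0=c+\lfloor c/n\rfloor+1$ in the orbit of $\{A,B\}$ only through the claim that the swapped pairs add non-interacting ribbons, one per pair, with head at the larger index, and you do not prove this claim. The route you sketch for it would not work, because containment and the core property do not exclude interlocking swaps. For example, $\trans{-1}{2}=s_0s_1s_0\in\tilc^0$ sends $\emptyset$ to $(2,2)$, a symmetric $2n$-core containing $\emptyset$. The pairs $\{-1,2\}$ and $\{-2,1\}$ would separately add the dominoes with contents $\{0,1\}$ and $\{-1,0\}$. Together they add a $2\times 2$ square, which is not a union of ribbons. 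What rules this out is $\ell(u)=\ell(w)+1$ (here the length jumps by $3$), and your sketch never uses it.

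You do not need that claim at all. Let $b=(r,k)$ be the marked box, i.e. the southeast-most box of its ribbon component in $\kappa/\tau$. The box east of $b$ is not in $\kappa$, and the box south of $b$ (if $r>1$) lies in $\tau$. Otherwise either one would be a box of $\kappa/\tau$ adjacent to $b$ with content $c+1$. Hence the east edge of $b$ is a south step of the boundary of $\kappa$, and the south edge of $b$ is an east step of the boundary of $\tau$. Both edges lie on the diagonal separating contents $c$ and $c+1$, so both carry the label $B_0$. Thus $p(\tau)_{B_0}=0$ and $p(\kappa)_{B_0}=1$.

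Since $\trans{A}{B}$ fixes every label outside $\{\pm A,\pm B\}+(2n+2)\mathbb{Z}$, $B_0$ lies in that set. Your normalization then yields $w(j)=B_0$. It does not matter whether $B_0$ comes from the smaller or the larger entry of its pair, since $\trans{i}{j}=\trans{j}{i}$. This local argument is exactly what the paper's proof does when it asserts $p_{B_0}(\tau)=0$ and $p_{B_0}(\kappa)=1$.
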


\begin{proof}
By Definition~\ref{defn:SBO}, $w$ is a subword of $u$. It is equal to that $w^{-1}u$ is a reflection, which means
\[
w^{-1}u = v^{-1}s_i v \, \text{ for some }\, v\in\tilc, \, i\in[0,n].
\]
Besides, (\ref{eq:tfact3}) and Theorem~\ref{thm:reflection} imply that the reflections in $\tilc$ are precisely type C transpositions. Thus,
\begin{equation}
u=w\trans{i}{j} \overset{(\ref{eq:reflection})}{=} \trans{w(i)}{w(j)}w\,\text{ for some }\, i,j\in\mathbb{Z}\setminus\{ q(n+1),q\in\mathbb{Z}\}.
\mlabel{eq:uwtij}
\end{equation}
Recall that the marking $m(C)$ is defined as the content of the most southeast box in one of the ribbon in $\kappa/\tau$. By the definition of the edge sequence, the marked box in $\kappa/\tau $ is touched by the south step
$p_{c+\lfloor c/n \rfloor+1}$. On the other hand,
\[
\kappa = \mathfrak{a}(u) = u\cdot\emptyset \overset{(\ref{eq:uwtij})}{=} \trans{w(i)}{w(j)}w \cdot\emptyset = \trans{w(i)}{w(j)} \cdot \tau \overset{(\ref{eq:twoaction})}{=} \trans{w(i)}{w(j)} * \tau.
\]
Due to the definition of ribbon, we have
\[
p_{c+\lfloor c/n \rfloor+1}(\tau) = 0, \qquad  p_{c+\lfloor c/n \rfloor+1}(\kappa) = 1.
\]
It follows that $w(j) = c+\lfloor c/n \rfloor+1$. This completes the proof.
\end{proof}

The following example illustrates Theorem~\ref{thm:main} by explicitly recovering the type C transposition and the marking from a strong marked cover.

\begin{exam}
Let   $$n=2, \qquad \tau:=(4,3,2,1) = \mathfrak{a}(s_1s_0s_2s_1s_0)=:\mathfrak{a}(w)$$ and $$\kappa:= (5,3,3,1,1) = \mathfrak{a}(s_0s_1s_0s_2s_1s_0)=:\mathfrak{a}(u).$$
Then there is a strong marked cover (see Example~\ref{ex:SMC})
\begin{equation*}
\tau=
\begin{tikzpicture}[scale=.3,line width=0.5pt,baseline=(a.base)]
\draw[draw=black] (-1,-1) rectangle (0,0);
\draw[draw=black] (0,-1) rectangle (1,0);
\draw[draw=black] (-1,0) rectangle (0,1);
\draw[draw=black] (1,-1) rectangle (2,0);
\draw[draw=black] (-1,1) rectangle (0,2);
\draw[draw=black] (0,0) rectangle (1,1);
\draw[draw=black] (-1,2) rectangle (0,3);
\draw[draw=black] (2,-1) rectangle (3,0);
\draw[draw=black] (0,1) rectangle (1,2);
\draw[draw=black] (1,0) rectangle (2,1);
\node (a) [align=center] {\\[0pt] };
\end{tikzpicture}
\overset{-4}{\Rightarrow}
\begin{tikzpicture}[scale=.3,line width=0.5pt,baseline=(a.base)]
\draw[draw=black] (-1,-1) rectangle (0,0);
\draw[draw=black] (0,-1) rectangle (1,0);
\draw[draw=black] (-1,0) rectangle (0,1);
\draw[draw=black] (1,-1) rectangle (2,0);
\draw[draw=black] (-1,1) rectangle (0,2);
\draw[draw=black] (0,0) rectangle (1,1);
\draw[draw=black] (-1,2) rectangle (0,3);
\draw[draw=black] (2,-1) rectangle (3,0);
\draw[draw=black] (0,1) rectangle (1,2);
\draw[draw=black] (1,0) rectangle (2,1);
\filldraw[yellow!90,draw=black] (3,-1) rectangle (4,0);
\filldraw[yellow!90,draw=black] (-1,3) rectangle (0,4);\node at(-0.5,3.5){\tiny\( \star \)};
\filldraw[yellow!90,draw=black] (1,1) rectangle (2,2);
\node (a) [align=center] {\\[0pt] };
\end{tikzpicture}
=\kappa,
\end{equation*}
with marking $c=-4$. Note that
\begin{equation*}
c+\lfloor c/n \rfloor+1 = -5.
\end{equation*}
Taking $$i,j\in\mathbb{Z}\setminus\{ q(n+1),q\in\mathbb{Z}\}\,\text{ such that }\, w(j) = u(i) = -5,$$
we have $i = -13, j=1$ and so $w(i) =-7, w(j) =-5$. This confirms the validity of Theorem~\ref{thm:main} via
\begin{align*}
u = s_0w \overset{(\ref{eq:tfact3})}{=} \trans{-1}{1} w \overset{(\ref{eq:tfact2})}{=} \trans{-7}{-5} w.
\end{align*}
\end{exam}

The correspondence in Theorem~\ref{thm:main} extends naturally from a single strong marked cover to chains of such covers, leading to permutation versions of strong marked strips and strong marked tableaux.

\begin{remark}
By Theorem~\ref{thm:main}, we obtain the $\tilc^0$ version of the strong marked strip and the strong marked tableau.
\end{remark}

\section{An action of type C permutations on partitions}\mlabel{s:action}
In Subsection~\ref{ss:PerCorePar}, we introduced an action of $\tilc$ on
$\Cco$ and a bijection between $\Cco$ and $\Cpar$. In this section, we
transport this action to $\Cpar$ and describe it explicitly in combinatorial
terms. We begin by recalling some notation from~\cite{LM}.

\begin{enumerate}[label=(\roman*)]
\item For $d\in\mathbb{Z}$, set
\[
D_d:= \{(i,j)\in\mathbb{Z}_{\geq1}\times \mathbb{Z}_{\geq1} \mid j-i=d  \}.
\]

\item A {\em $2n$-string} is a sequence of boxes $c_1,\ldots,c_s$, whose boxes lie in $D_{r+2n},\ldots,D_{r+2sn}$. Note that all boxes in one $2n$-string have the same residue.

\item For any two boxes $a$ and $b$ with $b$ south-east of $a$, let $a\land b$ denote the box directly south of $a$ and directly west of $b$.
\end{enumerate}

The following lemma recalls a basic structural property of addable and
removable corners of a core, which will be used to analyze the action of
the simple reflections.

\begin{lemma}{\rm (\cite[Property~17]{LM})}
Let $\kappa\in\co{2n}$.
\begin{enumerate}
\item If $\kappa$ has a removable corner of residue $i$, then the collection of all removable corners of $\kappa$ with residue $i$ forms a $2n$-string.
\mlabel{it:istring1}

\item If $\kappa$ has an addable corner of residue $i$, then the collection of all addable corners of $\kappa$ with residue $i$ forms a $2n$-string.
\mlabel{it:istring2}
\end{enumerate}
\mlabel{lem:istring}
\end{lemma}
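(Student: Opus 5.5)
The approach is to translate corners into a binary boundary word, in the spirit of the edge sequences of Section~\ref{s:SMT}, but using the type A version indexed by half-integers. Cores then correspond to words that are monotone along each residue class modulo $2n$, and the statement reduces to the observation that an intersection of two intervals is an interval.

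\textbf{Step 1: the boundary word.} Trace the border of $\dg{\kappa}$ from northwest to southeast, recording $0$ for an east step and $1$ for a south step. Label the steps by $\mathbb{Z}+\tfrac12$ so that a box of content $d$ has its north edge labelled $d-\tfrac12$ and its east edge labelled $d+\tfrac12$. This gives a word $m:\mathbb{Z}+\tfrac12\to\{0,1\}$, which is $1$ far to the left and $0$ far to the right. Directly from the picture:
\begin{itemize}
\item a box of content $d$ is a removable corner iff $(m(d-\tfrac12),m(d+\tfrac12))=(0,1)$;
\item it is an addable corner iff this pair equals $(1,0)$.
\end{itemize}
Moreover, the box lies in $\mathbb{Z}_{\ge1}^2$ automatically, since it is determined by the border.

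\textbf{Step 2: the core condition.} I will use the standard fact that removing a ribbon of size $2n$ from $\dg{\kappa}$ is the same as choosing $x$ with $m(x)=0$ and $m(x+2n)=1$ and interchanging these two letters. Hence $\kappa\in\co{2n}$ iff there is no such $x$. Equivalently, on each runner $\{x\equiv r \bmod 2n\}$ the word has the form $\cdots111000\cdots$. So for each residue class $r$ there is a threshold $t_r$ with $m(x)=1$ iff $x<t_r$ on that runner.

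\textbf{Step 3: part~(\ref{it:istring1}).} Fix the residue $i$. The contents $d\equiv i \bmod 2n$ have all their positions $x=d-\tfrac12$ on one runner $R$, and all their positions $x+1$ on the next runner $R'$. By Steps~1 and~2, such a box is a removable corner iff
\[
x\ge t_R \quad\text{and}\quad x+1<t_{R'}.
\]
The set of $x\in R$ satisfying both conditions is an interval of consecutive elements of the progression $R$. Writing it as $x_0,x_0+2n,\dots,x_0+2(s-1)n$, the removable corners of residue $i$ have contents $d_0,d_0+2n,\dots,d_0+2(s-1)n$. They therefore lie on consecutive diagonals spaced by $2n$, which is exactly a $2n$-string. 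Part~(\ref{it:istring2}) is identical: the conditions become $x<t_R$ and $x+1\ge t_{R'}$, which again cut out an interval.

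\textbf{Main obstacle.} The main care is the bookkeeping in Steps~1 and~2. One has to check that the labelling convention really places the $2n$-ribbon removal as a swap of letters at distance exactly $2n$, and that the orientation of the thresholds ($1$'s before $0$'s) is correct. If I am unsure, I will verify both directly on a single hook of length $2n$.
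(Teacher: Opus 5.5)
Your proof is correct. It cannot follow the paper's route, because the paper gives no proof here: it quotes the lemma from Lapointe and Morse \cite[Property~17]{LM}, where it is stated for arbitrary $(k+1)$-cores. The paper then uses only its consequences, namely Corollary~\ref{coro:2string} and the hook-length computation in Proposition~\ref{prop:stringhook}, both in diagonal and hook-length language.

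Your route is the abacus argument. A $2n$-core is exactly a boundary word that reads $\cdots 111000\cdots$ along each class modulo $2n$. The removable (resp.\ addable) corners of residue $i$ are cut out by one threshold inequality on the runner of $i-\tfrac12$ and one on the runner of $i+\tfrac12$. Hence their contents form an unbroken progression $d_0,d_0+2n,\ldots$, with one corner per diagonal.

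For Step~2 you need not go through ribbon removal, which settles the bookkeeping worry you raised. Boxes of $\dg{\kappa}$ correspond bijectively to pairs $x<y$ with $m(x)=0$ and $m(y)=1$, and the hook length is $y-x$. So the paper's definition of a core (no box of hook length $2n$) gives runner monotonicity at once.

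Compared with the citation, your approach buys several things:
\begin{enumerate}
\item It is self-contained.
\item It works for any modulus in place of $2n$.
\item It gives the endpoints of the string explicitly in terms of the thresholds.
\item It links directly to the paper's own machinery. Your word $m$ is exactly the edge sequence $p(\kappa)$ of Section~\ref{s:SMT} under the monotone relabelling $d+\tfrac12\mapsto d+\lfloor d/n\rfloor+1$ of $\mathbb{Z}+\tfrac12$ onto $\mathbb{Z}\setminus(n+1)\mathbb{Z}$. This relabelling sends your runners modulo $2n$ to the classes modulo $2n+2$ on which $\tilc$ acts.
\item In the same language, the value $\mathrm{hl}(c_m\land c_{m+1})=2n-1$ used in Proposition~\ref{prop:stringhook} for addable strings is just the label difference $(d+2n-\tfrac12)-(d+\tfrac12)$.
\end{enumerate}
The citation, by contrast, buys brevity and keeps the paper within the hook-length framework in which Section~\ref{s:action} is written.
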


Combining Lemma~\ref{lem:istring} with the symmetry of the cores in $\Cco$,
we obtain the following description of the boxes added or removed by the
action of a simple reflection.

\begin{coro}
Let $\kappa\in\Cco$ and $i\in[0,n]$.
If $$\dg{s_i\cdot \kappa} \supsetneq \dg{\kappa} \ ({\rm resp.} \dg{s_i\cdot \kappa} \subsetneq \dg{\kappa}),$$ then $s_i\cdot\kappa/\kappa$ (resp. $\kappa/s_i\cdot\kappa$) is composed of two $2n$-strings with residue $i$ and $2n-i \mod 2n$, respectively.  Moreover, there is a bijection between these two $2n$-strings via $\tau:(a,b)\mapsto (b,a)$.
\mlabel{coro:2string}
\end{coro}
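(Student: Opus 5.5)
The plan is to combine Lemma~\ref{lem:action}, Lemma~\ref{lem:istring} and Proposition~\ref{prop:symm}. I treat the addable case $\dg{s_i\cdot\kappa}\supsetneq\dg{\kappa}$; the removable case is identical with ``addable'' replaced by ``removable'' throughout, invoking Lemma~\ref{lem:istring}~(\ref{it:istring1}) instead of~(\ref{it:istring2}).

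First, by Lemma~\ref{lem:action}, strict growth forces case~(\ref{it:action1}). Hence $\kappa$ has an addable corner of residue $i$, and $s_i\cdot\kappa/\kappa$ is exactly the set $A_i\cup A_{2n-i}$. Here $A_r$ denotes the set of addable corners of $\kappa$ of residue $r$ mod $2n$. Since $\kappa\in\Cco\subseteq\co{2n}$ and $A_i\neq\emptyset$, Lemma~\ref{lem:istring}~(\ref{it:istring2}) shows that $A_i$ is a $2n$-string.

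Next, the map $\tau:(a,b)\mapsto(b,a)$ sends addable corners of the symmetric partition $\kappa$ to addable corners. Since it negates contents, it carries residue $i$ to residue $2n-i$. So by Proposition~\ref{prop:symm}, $\tau$ restricts to a bijection $A_i\to A_{2n-i}$; it is an involution, hence bijective. In particular $A_{2n-i}\neq\emptyset$, so Lemma~\ref{lem:istring} applies again and $A_{2n-i}$ is also a $2n$-string. Directly: $\tau$ maps the diagonals $D_{r+2n},\dots,D_{r+2sn}$ to $D_{-r-2n},\dots,D_{-r-2sn}$, which after reindexing is again a run of diagonals spaced by $2n$.

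The only delicate point is the degenerate case $i\in\{0,n\}$, where $i\equiv 2n-i$ and the two strings coincide, so $\tau$ is a bijection of a single string onto itself. I would state explicitly that ``two $2n$-strings'' is read with this coincidence allowed. This matches the convention in Lemma~\ref{lem:action}~(\ref{it:action1}), where the corners of residues $i$ and $2n-i$ are added together.

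For $i\in[1,n-1]$ we have $i\not\equiv 2n-i$ mod $2n$, so the two strings are disjoint. Their union is then all of $s_i\cdot\kappa/\kappa$, which finishes the proof.
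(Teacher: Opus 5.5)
Your proposal is correct and follows essentially the same route as the paper. It uses Lemma~\ref{lem:action} to identify $s_i\cdot\kappa/\kappa$ with the addable corners of residues $i$ and $2n-i$, Lemma~\ref{lem:istring} for the string structure, and the diagonal symmetry of $\kappa$ (Proposition~\ref{prop:symm}) for the bijection $\tau$. The differences are minor: you treat the removable case directly via Lemma~\ref{lem:istring}~(\ref{it:istring1}) instead of swapping $\kappa\leftrightarrow s_i\cdot\kappa$ and using $s_i^2=1$, and you state explicitly that the two strings coincide for $i\in\{0,n\}$, which the paper only remarks on after the corollary.
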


\begin{proof}
Assume $\dg{s_i\cdot \kappa} \supsetneq \dg{\kappa}$. Then $\kappa$ has an addable corner $(a,b)$ of residue $i$ for some $a,b$. By the symmetry of $\Cco$, $(b,a)$ is an addable corner of residue $2n-i$ {\rm mod} $2n$. Hence, it follows from Lemma~\ref{lem:action} and Lemma~\ref{lem:istring}~(\ref{it:istring2}) that $s_i\cdot\kappa/\kappa$ consists of two $2n$-strings with residue $i$ and $2n-i$ {\rm mod} $2n$ respectively, and the bijection is natural.
Interchanging $\kappa\leftrightarrow s_i\cdot \kappa$ and using $s_i^2=1$ proves the case of $\dg{s_i\cdot \kappa} \subsetneq \dg{\kappa}$.
\end{proof}

Observe that for $i=0$ or $i=n$, the two $2n$-strings appearing in Corollary~\mref{coro:2string} are in fact identical.
The preceding description of the added and removed $2n$-strings allows us
to determine precisely which hook lengths cross the $2n$-bound under the
action of a simple reflection.

\begin{prop}
Let $\kappa\in\Cco$ and $i\in[0,n]$.
\begin{enumerate}
\item If $c_1,\ldots,c_s$ and $c_1',\ldots,c_s'$ are two $2n$-string of addable corners of $\kappa$ with residues $i$ and $2n-i$ {\rm mod} $2n$ respectively, then the boxes $$c_1\land c_2,\ldots,c_{s-1}\land c_s, c_1'\land c_2',\ldots,c_{s-1}'\land c_s'$$ are the only boxes whose hook length is $2n$-bounded in $\dg{\kappa}$ but exceeds $2n$ in $\dg{s_i\cdot\kappa}$.
\mlabel{it:stringhook1}

\item If $c_1,\ldots,c_s$ and $c_1',\ldots,c_s'$ are two $2n$-string of removable corners of $\kappa$ with residues $i$ and $2n-i$ {\rm mod} $2n$ respectively, then the boxes $$c_1\land c_2,\ldots,c_{s-1}\land c_s, c_1'\land c_2',\ldots,c_{s-1}'\land c_s'$$ are the only boxes whose hook length exceeds $2n$ in $\dg{\kappa}$ but is $2n$-bounded in $\dg{s_i\cdot\kappa}$.
\mlabel{it:stringhook2}
\end{enumerate}
\mlabel{prop:stringhook}
\end{prop}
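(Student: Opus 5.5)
We prove part~(\ref{it:stringhook1}); part~(\ref{it:stringhook2}) then follows by applying~(\ref{it:stringhook1}) to $\eta:=s_i\cdot\kappa$, since $s_i\cdot\eta=\kappa$ and the removable corners of $\kappa$ of residue $i$ are exactly the addable corners of $\eta$ added by $s_i$ (Corollary~\ref{coro:2string}). We also reduce to a single string: by Corollary~\ref{coro:2string} the map $(a,b)\mapsto(b,a)$ exchanges the two strings, and transposition preserves hook lengths in the symmetric cores $\kappa$ and $s_i\cdot\kappa$. Moreover $(c_k\land c_{k+1})'=c_k'\land c_{k+1}'$. So it suffices to handle the contributions of the string $c_1,\ldots,c_s$, and then take care of boxes whose hook meets both strings.

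\textbf{Step 1: which hooks change.} A box $x\in\dg{\kappa}$ has its hook length increased by adding a set of corners exactly when some added corner lies in its row (to the east) or in its column (to the north). An addable corner in row $r$ adds one box to row $r$, and one in column $q$ adds one box to column $q$. Hence
\[
\hl{x}_{s_i\cdot\kappa}-\hl{x}_{\kappa}\in\{0,1,2\}.
\]
The value is $2$ precisely when $x$ lies in the row of one added corner and in the column of another.

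\textbf{Step 2: hook lengths of the candidate boxes.} Write $c_k=(r_k,q_k)$. The strings have increasing content, each step of size $2n$, so $c_{k+1}$ lies strictly southeast of $c_k$. Then $c_k\land c_{k+1}=(r_{k+1},q_k)$ in the paper's convention. Its arm ends just before $c_{k+1}$ and its leg ends just before $c_k$, so
\[
\hl{c_k\land c_{k+1}}_\kappa=\mathrm{cont}(c_{k+1})-\mathrm{cont}(c_k)-1=2n-1.
\]
After adding both corners the hook length becomes $2n+1$. So these boxes cross the bound.

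\textbf{Step 3: only these boxes cross.} Let $x$ be a box with $\hl{x}_\kappa\le 2n$ and $\hl{x}_{s_i\cdot\kappa}>2n$. Since $\kappa$ is a $2n$-core, $\hl{x}_\kappa\ne 2n$. Since $s_i\cdot\kappa$ is a $2n$-core, $\hl{x}_{s_i\cdot\kappa}\neq 2n$. Together with Step~1 this forces $\hl{x}_\kappa=2n-1$ with an increase of exactly $2$. So $x$ lies in the row of some added corner $d$ and the column of another added corner $e$, with $x=e\land d$. The hook-length computation of Step~2 then gives
\[
\mathrm{cont}(d)-\mathrm{cont}(e)=2n.
\]
If $d$ and $e$ lie in the same string, they must be consecutive, so $x=c_k\land c_{k+1}$ or $x=c_k'\land c_{k+1}'$.

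The remaining case is $d$ and $e$ in different strings, with contents $\equiv i$ and $\equiv -i \pmod{2n}$. A difference of exactly $2n$ then forces $2i\equiv 0\pmod{2n}$, that is $i\in\{0,n\}$. In that case the two strings coincide, so $d$ and $e$ are again consecutive members of one string.

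\textbf{Main obstacle.} The delicate point is Step~1, specifically the claim that every box $x$ whose hook gains two boxes has the form $e\land d$ with $\hl{x}_\kappa=\mathrm{cont}(d)-\mathrm{cont}(e)-1$. This requires checking that an addable corner in the row of $x$ sits exactly at the end of the arm, and one in the column sits exactly at the end of the leg. That holds because an addable corner in row $r$ is at column $\kappa_r+1$, and similarly for columns. I would verify this directly from the definition of addable corners and then do the content arithmetic.
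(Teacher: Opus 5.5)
Your proof is correct and takes essentially the paper's route: the core property forces a crossing box to gain exactly two in hook length, which only happens at a box $e\land d$ lying in the row of one added corner and the column of another, with hook length in $\kappa$ equal to the content difference of $d$ and $e$ minus one. You also handle the mixed case where $d$ and $e$ lie in different strings, which the paper's proof skips when it asserts that the relevant boxes have the form $c_a\land c_b$ or $c_a'\land c_b'$; your residue argument ($2i\equiv 0\pmod{2n}$ forces $i\in\{0,n\}$, where the two strings coincide) closes that case.
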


\begin{proof}
We only need to prove Item~(\ref{it:stringhook1}), since Item~(\ref{it:stringhook2}) can be obtained by replacing $\kappa\leftrightarrow s_i\cdot\kappa$ in Item~(\ref{it:stringhook1}).
Note that, by the definition of $\Cco$, neither $\dg{\kappa}$ nor
$\dg{s_i\cdot\kappa}$ contains a box of hook length $2n$.
Thus, if there exists a box whose hook length is $2n$-bounded in $\kappa$ but exceeds $2n$ in $s_i\cdot\kappa$, then the hook length of this box increased by at least $2$ during $\kappa\to s_i\cdot\kappa$. If $\kappa$ has an addable corner of residue $i$, then $\dg{s_i\cdot \kappa} \supsetneq \dg{\kappa}$ due to Lemma~\ref{lem:action}. By Corollary~\ref{coro:2string}, $s_i\cdot\kappa$ is obtained from $\kappa$ by adding $2n$-strings $c_1,\ldots,c_s$ and $c_1',\ldots,c_s'$. Hence, the hook length of any box $c$ in $\dg{s_i\cdot \kappa}$ is at least 2 longer than that in $\dg{\kappa}$ only if $c= c_a\land c_b$ or $c_a'\land c_b'$ for some $a,b\in[1,s]$.

Consider the hook length of such $c$ in $\dg{\kappa}$, and assume $c_m=(i_m,j_m)$ and $c_m'=(i_m',j_m')$ for $m\in[1,s]$. Then
\begin{align*}
\hl{c_m\land c_{m+1}} =&\ (i_m-i_{m+1}-1) + (j_{m+1}-j_m-1) +1\\
=&\ (j_{m+1}-i_{m+1})-(j_m-i_m)-1\\
=&\ 2n-1,
\end{align*}
and for $1\leq a<b-1<s$,
\begin{align*}
\hl{c_a\land c_{b}} =&\ (i_a-i_b-1) + (j_b-j_a-1) +1\\
=&\ (j_b-i_b)-(j_a-i_a)-1\\
>&\ 2n.
\end{align*}
By the symmetry of $\kappa$,
$$\hl{c_m'\land c_{m+1}'}=2n-1, \qquad \hl{c_a'\land c_{b}'}>2n\,\text{ for }\, 1\leq a<b-1<s.$$
It follows that the boxes
$c_m\land c_{m+1}$ and $c_m'\land c_{m+1}'$,
for $m\in[1,s-1]$, are precisely those with $2n$-bounded hook lengths
in $\dg{\kappa}$.
This completes the proof.
\end{proof}

We can now combine the preceding analysis of $2n$-strings and hook lengths
with the bijection $\mathfrak{p}:\Cco\to\Cpar$ to obtain an explicit
description of the induced group action of $\tilc$ on bounded partitions.


\begin{theorem}
Let $\lambda\in\Cpar$ and $\kappa:=\mathfrak{p}^{-1}(\lambda)$. Then
\begin{equation*}
s_i\cdot\kappa =
\left\{
\begin{array}{ll}
\mathfrak{p}^{-1}(\lambda+\epsilon_a), & \quad \text{{\rm if} $\dg{\kappa}\subsetneq \dg{s_i\cdot\kappa}$},\\
\mathfrak{p}^{-1}(\lambda-\epsilon_r), & \quad \text{{\rm if} $\dg{\kappa}\supsetneq \dg{s_i\cdot\kappa}$},\\
\kappa, & \quad \text{{\rm otherwise}},
\end{array}
\right.
\end{equation*}
where
\[
a:=\max\{ c \mid (c,d)\,\text{ is an addable corner of $\kappa$ with residue $i$ or $2n-i$ {\rm mod} $2n$, and $d-c\geq0$} \},
\]
\[
r:=\max\{ c \mid (c,d)\,\text{ is a removable corner of $\kappa$ with residue $i$ or $2n-i$ {\rm mod} $2n$, and $d-c\geq0$} \}.
\]
\mlabel{thm:silambda}
\end{theorem}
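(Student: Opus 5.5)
Since $\mathfrak{p}$ is a bijection (Lemma~\ref{lem:bij2}), it suffices to compute $\mathfrak{p}(s_i\cdot\kappa)$ directly and compare it with $\lambda$. The third case is immediate from Lemma~\ref{lem:action}~(\ref{it:action3}). The removal case follows from the addition case applied to $\kappa':=s_i\cdot\kappa$, because $s_i^2=1$. The removable corners of $\kappa$ of residue $i$ or $2n-i$ are exactly the addable corners of $\kappa'$ of the same residues, and they have the same row indices. So we concentrate on the case $\dg{\kappa}\subsetneq\dg{s_i\cdot\kappa}$.

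By Corollary~\ref{coro:2string}, $s_i\cdot\kappa/\kappa$ is the union of two $2n$-strings $c_1,\ldots,c_s$ (residue $i$) and $c_1',\ldots,c_s'$ (residue $2n-i$), exchanged by transposition. We then track the two ingredients of $\mathfrak{p}$: the number $\ell$ of diagonal boxes, and for each row $a\le\ell$ the count $\#{\rm skew}^\perp_a$ of strictly-above-diagonal boxes with hook length $<2n$.

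\emph{Gain.} The new boxes all lie in ${\rm skew}$, since an added corner has hook length $1$. Those with $d>c$ each contribute $+1$ to their row. A new diagonal box (possible only for $i=0$, residue $0$) increases $\ell$ by one and creates a new row of $\mathfrak{p}$ with part $1$.

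\emph{Loss.} By Proposition~\ref{prop:stringhook}~(\ref{it:stringhook1}), the only boxes leaving ${\rm skew}$ are $c_m\land c_{m+1}$ and $c_m'\land c_{m+1}'$. The box $c_m\land c_{m+1}$ lies in the row of $c_{m+1}$ and the column of $c_m$. Hook lengths of all other old boxes can only grow without crossing $2n$, or stay bounded, so they remain in ${\rm skew}$.

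Now restrict to boxes above the diagonal, i.e.\ $d>c$. Order the corners that lie weakly above the diagonal by row; contents increase along a string, so their rows decrease. Each corner $c_m$ strictly above the diagonal is matched with the removed box $c_{m-1}\land c_m$ in the same row, whenever $c_{m-1}$ exists. We must check that this lost box is also above the diagonal, or else account for it via the mirror string. The net change per row is then zero, except in the row of the top-most corner, i.e.\ the one with largest row index among those with $d\ge c$. That row is exactly $a$, and it gains $1$.

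This bookkeeping is the main obstacle. It requires treating separately $i\in[1,n-1]$, $i=0$ and $i=n$, where the two strings coincide. It also requires handling the corners near the diagonal. There a lost box $c_{m}\land c_{m+1}$ may lie on or below the diagonal while its mirror $c'\land c'$ lies above, and the symmetry $(x,y)\mapsto(y,x)$ of Proposition~\ref{prop:symm} must be used to pair it with a gain.

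For $i=0$ the central corner lies on the diagonal. It adds a new part $1$ at row $a=\ell+1$, which agrees with $\lambda+\epsilon_{\ell+1}$. For $i=n$ there is no diagonal corner, and the top-most corner with $d>c$ gives the $+1$. I would verify these near-diagonal configurations explicitly with contents $d-c\equiv i$ or $-i \pmod{2n}$ and $|d-c|<2n$. Finally, $\lambda+\epsilon_a\in\Cpar$ holds automatically, since it equals $\mathfrak{p}(s_i\cdot\kappa)$.
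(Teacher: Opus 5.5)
Your framework is the paper's. You reduce the removal case to the addition case via $s_i^2=1$, and you use Corollary~\ref{coro:2string} and Proposition~\ref{prop:stringhook} to see that ${\rm skew}$ changes only by gaining the new corners and losing the boxes $c_{m-1}\land c_m$. The remaining step is a row-by-row comparison. The one step that actually proves the theorem is that row $a$ is the only row whose count in ${\rm skew}^\perp$ changes. You assert this and then defer it (``I would verify these near-diagonal configurations explicitly''), so nothing beyond Proposition~\ref{prop:stringhook} is established.

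The mechanism you suggest for closing it, using the mirror string to pair a lost box with a gain, is also not the right picture. Distinct new corners lie in distinct rows, since a row has at most one addable corner. The only lost box in the row of $c_m$ is $c_{m-1}\land c_m$. So each row carries at most one gain and at most one loss. Since $\mathfrak{p}$ reads ${\rm skew}^\perp$ row by row, compensation across rows would give $+1$ in one part and $-1$ in another, not cancellation. What must be shown is a sign statement inside a single row: the gain $c_m$ and the loss $c_{m-1}\land c_m$ both have positive content, except in row $a$.

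The missing idea is the maximality of $a$ combined with transposing the predecessor, which is what the paper's Claim uses. Let $c=(x,y)$ be a new corner with $y>x$, and let $\tilde c=(\tilde x,\tilde y)$ be its predecessor in its string, so the loss in row $x$ is $(x,\tilde y)$.

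\emph{If $\tilde y>x$.} Then $\tilde c$ lies in row $\tilde x>x$, and $\tilde c^{T}=(\tilde y,\tilde x)$ is a new corner by Proposition~\ref{prop:symm} lying in row $\tilde y>x$. These two have opposite contents, so one of them is a new corner of nonnegative content above row $x$, and hence $x\neq a$. This is exactly the paper's argument that the loss in row $a$ has content $\le 0$.

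\emph{If $\tilde y\le x$, or if $c$ has no predecessor.} Then every corner preceding $c$ in its string has negative content. Any other new corner of nonnegative content is either later in that string, hence in a lower row, or the transpose of a corner preceding $c$. In the latter case its row is at most $\tilde y\le x$. Thus $x=a$.

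A corner of content $\le 0$ comes with a loss of strictly smaller content, so its row does not change either. Hence every row other than $a$ has net change $0$, and row $a$ gains $1$; when $i=0$ and the middle corner is diagonal, this is a new part $1$ at $a=\ell+1$. The argument is uniform in $i$, so the separate case analysis for $i=0$ and $i=n$ that you anticipate is unnecessary.
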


\begin{proof}
If neither $\dg{\kappa}\subsetneq \dg{s_i\cdot\kappa}$ nor $\dg{\kappa}\supsetneq \dg{s_i\cdot\kappa}$ holds, then the ``otherwise" case is obvious by Lemma~\ref{lem:action}~(\ref{it:action3}). Suppose $\dg{\kappa}\subsetneq \dg{s_i\cdot\kappa}$. By Lemma~\ref{coro:2string}, $s_i\cdot\kappa/\kappa$ is composed of two $2n$-strings with residue $i$ and $2n-i$ {\rm mod} $2n$ respectively. Denote such $2n$-strings by $c_1,\ldots,c_s$ and $c_1',\ldots,c_s'$. Proposition~\ref{prop:stringhook}~(\ref{it:stringhook1}) reveals that, the boxes $$c_1\land c_2,\ldots,c_{s-1}\land c_s, c_1'\land c_2',\ldots,c_{s-1}'\land c_s'$$ are the only boxes whose hook length is $2n$-bounded in $\dg{\kappa}$ but exceeds $2n$ in $\dg{s_i\cdot\kappa}$. Claim: if $c_m$ (resp. $c_m'$) is a box in row $a$ for some $m\in[1,s]$, then $c_{m-1}\land c_m$ (resp. $c_{m-1}'\land c_m'$) lies in or above the main diagonal. Follows from this claim, we obtain that $s_i$ acts on $\kappa$ by increasing the box with $2n$-bounded hook length only in row $a$, since ${\rm skew}^{\perp}(\kappa)$ exchanged by removing $c_i\land c_{i+1}$ and $c_j'\land c_{j+1}'$ and adding $c_x$ and $c_y'$ all lie below the main diagonal. Thus, $s_i\cdot\kappa=\mathfrak{p}^{-1}(\lambda+\epsilon_a)$ by the description of $\mathfrak{p}$ in Lemma~\ref{lem:bij2}.

Finally, we prove the claim. By symmetry of these two $2n$-strings, we may assume that $c_m$ is exactly the box in row $a$ for some $m\in[1,s]$. Suppose $c_{m-1}\land c_m$ lies below the main diagonal. Set $c_m := (a,j)$, $c_{m-1} = (i,j')$ with $j'-i\leq0$. Then $c_{m-1}\land c_m = (a,j')$ with $j'-a>0$. The symmetry of two $2n$-strings implies that $$
(j,a) = c_{k-1}', \qquad (j',i)=c_{k}'\,\text{ for some }\,k\in[2,s].
$$
It follows that $j'$ has the following properties: $(j',i)=c_k'$ is an
addable corner of $\kappa$ of residue $2n-i$ modulo $2n$,
$i-j'\geq 0$, and $j'>a$. This contradicts the maximality of $a$.
This proves the claim and completes the proof in the case
$\dg{\kappa}\subsetneq\dg{s_i\cdot\kappa}$.

Replacing $\kappa\leftrightarrow s_i\cdot \kappa$ and using $s_i^2=1$ prove the case of $\dg{\kappa}\supsetneq\dg{s_i\cdot \kappa}$.
\end{proof}

The following example illustrates Theorem~\ref{thm:silambda} by computing
explicitly the action of a simple reflection on a symmetric core and on
the corresponding bounded partition.

\begin{exam}
Let $n=2$, $i=1$ and $\kappa:= (5,4,3,2,1)\in\Cco$. Then
\begin{equation*}
s_i\cdot\kappa=
\begin{tikzpicture}[scale=.3,line width=0.5pt,baseline=(a.base)]
\draw[draw=black] (-1,-1) rectangle (0,0);
\draw[draw=black] (0,-1) rectangle (1,0);
\draw[draw=black] (-1,0) rectangle (0,1);
\draw[draw=black] (1,-1) rectangle (2,0);
\draw[draw=black] (-1,1) rectangle (0,2);
\draw[draw=black] (0,0) rectangle (1,1);
\draw[draw=black] (-1,2) rectangle (0,3);
\draw[draw=black] (2,-1) rectangle (3,0);
\draw[draw=black] (0,1) rectangle (1,2);
\draw[draw=black] (1,0) rectangle (2,1);
\draw[draw=black] (3,-1) rectangle (4,0);
\draw[draw=black] (-1,3) rectangle (0,4);
\draw[draw=black] (1,1) rectangle (2,2);
\draw[draw=black] (0,2) rectangle (1,3);
\draw[draw=black] (2,0) rectangle (3,1);
\filldraw[blue!60,draw=black] (-1,4) rectangle (0,5);
\filldraw[blue!30,draw=black] (4,-1) rectangle (5,0);
\filldraw[blue!60,draw=black] (3,0) rectangle (4,1);
\filldraw[blue!30,draw=black] (0,3) rectangle (1,4);
\filldraw[blue!30,draw=black] (2,1) rectangle (3,2);
\filldraw[blue!60,draw=black] (1,2) rectangle (2,3);
\node (a) [align=center] {\\[-1000pt] };
\end{tikzpicture}
,
\end{equation*}
where the light blue boxes are the $2n$-strings with residue $i$, and the dark blue boxes are the $2n$-strings with residue $2n-i$ {\rm mod} $2n$
(see Example~\ref{ex:bij1}). Note that $a=3$.
Let $\lambda:=\mathfrak{p}(\kappa)$. By Lemma~\ref{lem:bij2}, $\lambda=(3,3,1)$ via
\[
\begin{tikzpicture}[scale=.3,line width=0.5pt,baseline=(a.base)]
\draw[draw=black] (-1,-4) rectangle (0,-3);\node at(-0.5,-3.5){\tiny\( 9 \)};
\draw[draw=black] (0,-4) rectangle (1,-3);\node at(0.5,-3.5){\tiny\( 7 \)};
\draw[draw=black] (-1,-3) rectangle (0,-2);\node at(-0.5,-2.5){\tiny\( 7 \)};
\draw[draw=black] (1,-4) rectangle (2,-3);\node at(1.5,-3.5){\tiny\( 5 \)};
\draw[draw=black] (-1,-2) rectangle (0,-1);\node at(-0.5,-1.5){\tiny\( 5 \)};
\draw[draw=black] (0,-3) rectangle (1,-2);\node at(0.5,-2.5){\tiny\( 5 \)};
\draw[draw=black] (-1,-1) rectangle (0,0);\node at(-0.5,-0.5){\tiny\( 3 \)};
\draw[draw=black] (2,-4) rectangle (3,-3);\node at(2.5,-3.5){\tiny\( 3 \)};
\draw[draw=black] (0,-2) rectangle (1,-1);\node at(0.5,-1.5){\tiny\( 3 \)};
\draw[draw=black] (1,-3) rectangle (2,-2);\node at(1.5,-2.5){\tiny\( 3 \)};
\draw[draw=black] (3,-4) rectangle (4,-3);\node at(3.5,-3.5){\tiny\( 1 \)};
\draw[draw=black] (-1,0) rectangle (0,1);\node at(-0.5,0.5){\tiny\( 1 \)};
\draw[draw=black] (1,-2) rectangle (2,-1);\node at(1.5,-1.5){\tiny\( 1 \)};
\draw[draw=black] (0,-1) rectangle (1,0);\node at(0.5,-0.5){\tiny\( 1 \)};
\draw[draw=black] (2,-3) rectangle (3,-2);\node at(2.5,-2.5){\tiny\( 1 \)};
\node (a) [align=center] {\\[20pt] };
\end{tikzpicture}
\to
\begin{tikzpicture}[scale=.3,line width=0.5pt,baseline=(a.base)]
\draw[draw=black] (-1,-4) rectangle (0,-3);\node at(-0.5,-3.5){\tiny\( 9 \)};
\draw[draw=black] (0,-4) rectangle (1,-3);\node at(0.5,-3.5){\tiny\( 7 \)};
\draw[draw=black] (-1,-3) rectangle (0,-2);\node at(-0.5,-2.5){\tiny\( 7 \)};
\draw[draw=black] (1,-4) rectangle (2,-3);\node at(1.5,-3.5){\tiny\( 5 \)};
\draw[draw=black] (-1,-2) rectangle (0,-1);\node at(-0.5,-1.5){\tiny\( 5 \)};
\draw[draw=black] (0,-3) rectangle (1,-2);\node at(0.5,-2.5){\tiny\( 5 \)};
\filldraw[green!90,draw=black] (-1,-1) rectangle (0,0);\node at(-0.5,-0.5){\tiny\( 3 \)};
\filldraw[green!30,draw=black] (2,-4) rectangle (3,-3);\node at(2.5,-3.5){\tiny\( 3 \)};
\filldraw[green!90,draw=black] (0,-2) rectangle (1,-1);\node at(0.5,-1.5){\tiny\( 3 \)};
\filldraw[green!30,draw=black] (1,-3) rectangle (2,-2);\node at(1.5,-2.5){\tiny\( 3 \)};
\filldraw[green!30,draw=black] (3,-4) rectangle (4,-3);\node at(3.5,-3.5){\tiny\( 1 \)};
\filldraw[green!90,draw=black] (-1,0) rectangle (0,1);\node at(-0.5,0.5){\tiny\( 1 \)};
\filldraw[green!90,draw=black] (1,-2) rectangle (2,-1);\node at(1.5,-1.5){\tiny\( 1 \)};
\filldraw[green!90,draw=black] (0,-1) rectangle (1,0);\node at(0.5,-0.5){\tiny\( 1 \)};
\filldraw[green!30,draw=black] (2,-3) rectangle (3,-2);\node at(2.5,-2.5){\tiny\( 1 \)};
\node (a) [align=center] {\\[20pt] };
\end{tikzpicture}
\to
\begin{tikzpicture}[scale=.3,line width=0.5pt,baseline=(a.base)]
\draw[draw=black] (-1,-4) rectangle (0,-3);
\filldraw[green!30,draw=black] (0,-4) rectangle (1,-3);
\filldraw[green!30,draw=black] (1,-4) rectangle (2,-3);
\draw[draw=black] (-1,-3) rectangle (0,-2);
\filldraw[green!30,draw=black] (0,-3) rectangle (1,-2);
\filldraw[green!30,draw=black] (1,-3) rectangle (2,-2);
\draw[draw=black] (-1,-2) rectangle (0,-1);
\node (a) [align=center] {\\[20pt] };
\end{tikzpicture}
\, ,
\]
and $\mathfrak{p}(s_i\cdot\kappa) = (3,3,2) = \lambda+\epsilon_a$ via
\[
\begin{tikzpicture}[scale=.3,line width=0.5pt,baseline=(a.base)]
\draw[draw=black] (-1,-4) rectangle (0,-3);\node at(-0.5,-3.5){\tiny\( 11 \)};
\draw[draw=black] (0,-4) rectangle (1,-3);\node at(0.5,-3.5){\tiny\( 9 \)};
\draw[draw=black] (-1,-3) rectangle (0,-2);\node at(-0.5,-2.5){\tiny\( 9 \)};
\draw[draw=black] (1,-4) rectangle (2,-3);\node at(1.5,-3.5){\tiny\( 7 \)};
\draw[draw=black] (-1,-2) rectangle (0,-1);\node at(-0.5,-1.5){\tiny\( 7 \)};
\draw[draw=black] (0,-3) rectangle (1,-2);\node at(0.5,-2.5){\tiny\( 7 \)};
\draw[draw=black] (-1,-1) rectangle (0,0);\node at(-0.5,-0.5){\tiny\( 5 \)};
\draw[draw=black] (2,-4) rectangle (3,-3);\node at(2.5,-3.5){\tiny\( 5 \)};
\draw[draw=black] (0,-2) rectangle (1,-1);\node at(0.5,-1.5){\tiny\( 5 \)};
\draw[draw=black] (1,-3) rectangle (2,-2);\node at(1.5,-2.5){\tiny\( 5 \)};
\draw[draw=black] (3,-4) rectangle (4,-3);\node at(3.5,-3.5){\tiny\( 3 \)};
\draw[draw=black] (-1,0) rectangle (0,1);\node at(-0.5,0.5){\tiny\( 3 \)};
\draw[draw=black] (1,-2) rectangle (2,-1);\node at(1.5,-1.5){\tiny\( 3 \)};
\draw[draw=black] (0,-1) rectangle (1,0);\node at(0.5,-0.5){\tiny\( 3 \)};
\draw[draw=black] (2,-3) rectangle (3,-2);\node at(2.5,-2.5){\tiny\( 3 \)};
\draw[draw=black] (4,-4) rectangle (5,-3);\node at(4.5,-3.5){\tiny\( 1 \)};
\draw[draw=black] (3,-3) rectangle (4,-2);\node at(3.5,-2.5){\tiny\( 1 \)};
\draw[draw=black] (2,-2) rectangle (3,-1);\node at(2.5,-1.5){\tiny\( 1 \)};
\draw[draw=black] (1,-1) rectangle (2,0);\node at(1.5,-0.5){\tiny\( 1 \)};
\draw[draw=black] (0,-0) rectangle (1,1);\node at(0.5,0.5){\tiny\( 1 \)};
\draw[draw=black] (-1,1) rectangle (0,2);\node at(-0.5,1.5){\tiny\( 1 \)};
\node (a) [align=center] {\\[20pt] };
\end{tikzpicture}
\to
\begin{tikzpicture}[scale=.3,line width=0.5pt,baseline=(a.base)]
\draw[draw=black] (-1,-4) rectangle (0,-3);\node at(-0.5,-3.5){\tiny\( 11 \)};
\draw[draw=black] (0,-4) rectangle (1,-3);\node at(0.5,-3.5){\tiny\( 9 \)};
\draw[draw=black] (-1,-3) rectangle (0,-2);\node at(-0.5,-2.5){\tiny\( 9 \)};
\draw[draw=black] (1,-4) rectangle (2,-3);\node at(1.5,-3.5){\tiny\( 7 \)};
\draw[draw=black] (-1,-2) rectangle (0,-1);\node at(-0.5,-1.5){\tiny\( 7 \)};
\draw[draw=black] (0,-3) rectangle (1,-2);\node at(0.5,-2.5){\tiny\( 7 \)};
\draw[draw=black] (-1,-1) rectangle (0,0);\node at(-0.5,-0.5){\tiny\( 5 \)};
\draw[draw=black] (2,-4) rectangle (3,-3);\node at(2.5,-3.5){\tiny\( 5 \)};
\draw[draw=black] (0,-2) rectangle (1,-1);\node at(0.5,-1.5){\tiny\( 5 \)};
\draw[draw=black] (1,-3) rectangle (2,-2);\node at(1.5,-2.5){\tiny\( 5 \)};
\filldraw[green!30,draw=black] (3,-4) rectangle (4,-3);\node at(3.5,-3.5){\tiny\( 3 \)};
\filldraw[green!90,draw=black] (-1,0) rectangle (0,1);\node at(-0.5,0.5){\tiny\( 3 \)};
\filldraw[green!90,draw=black] (1,-2) rectangle (2,-1);\node at(1.5,-1.5){\tiny\( 3 \)};
\filldraw[green!90,draw=black] (0,-1) rectangle (1,0);\node at(0.5,-0.5){\tiny\( 3 \)};
\filldraw[green!30,draw=black] (2,-3) rectangle (3,-2);\node at(2.5,-2.5){\tiny\( 3 \)};
\filldraw[green!30,draw=black] (4,-4) rectangle (5,-3);\node at(4.5,-3.5){\tiny\( 1 \)};
\filldraw[green!30,draw=black] (3,-3) rectangle (4,-2);\node at(3.5,-2.5){\tiny\( 1 \)};
\filldraw[green!30,draw=black] (2,-2) rectangle (3,-1);\node at(2.5,-1.5){\tiny\( 1 \)};
\filldraw[green!90,draw=black] (1,-1) rectangle (2,0);\node at(1.5,-0.5){\tiny\( 1 \)};
\filldraw[green!90,draw=black] (0,-0) rectangle (1,1);\node at(0.5,0.5){\tiny\( 1 \)};
\filldraw[green!90,draw=black] (-1,1) rectangle (0,2);\node at(-0.5,1.5){\tiny\( 1 \)};
\node (a) [align=center] {\\[20pt] };
\end{tikzpicture}
\to
\begin{tikzpicture}[scale=.3,line width=0.5pt,baseline=(a.base)]
\draw[draw=black] (-1,-4) rectangle (0,-3);
\filldraw[green!30,draw=black] (0,-4) rectangle (1,-3);
\filldraw[green!30,draw=black] (1,-4) rectangle (2,-3);
\draw[draw=black] (-1,-3) rectangle (0,-2);
\filldraw[green!30,draw=black] (0,-3) rectangle (1,-2);
\filldraw[green!30,draw=black] (1,-3) rectangle (2,-2);
\draw[draw=black] (-1,-2) rectangle (0,-1);
\filldraw[green!30,draw=black] (0,-2) rectangle (1,-1);
\node (a) [align=center] {\\[20pt] };
\end{tikzpicture}
\, .
\]
\end{exam}

\smallskip

\noindent
{\bf Acknowledgements}: This work is supported by the Natural Science Foundation of Gansu Province (25JRRA644), Innovative Fundamental Research Group Project of Gansu Province (23JRRA684) and Longyuan Young Talents of Gansu Province.

\noindent
{\bf Declaration of interests.} The authors have no conflicts of interest to disclose.

\noindent
{\bf Data availability.} Data sharing is not applicable as no new data were created or analyzed.

\end{document}